\theoremstyle{plain}
\newtheorem{theorem}{Theorem}[section]
\newtheorem{corollary}[theorem]{Corollary}
\newtheorem{definition}[theorem]{Definition}
\newtheorem{example}[theorem]{Example}
\newtheorem{lemma}[theorem]{Lemma}
\newtheorem{problem}[theorem]{Problem}
\newtheorem{proposition}[theorem]{Proposition}
\newtheorem{remark}[theorem]{Remark}
\DeclareMathOperator{\supp}{supp}
\DeclareMathOperator{\dvg}{div}
\DeclareMathOperator{\cost}{cost}
\DeclareMathOperator{\dist}{dist}
\DeclareMathOperator{\Ric}{Ric}
\DeclareMathOperator{\Lip}{Lip}
\newcommand{\SRW}{\operatorname{SRW}}
\newcommand{\sph}{\operatorname{sphere}}
\newcommand{\ball}{\operatorname{ball}}
\newcommand{\abs}{\operatorname{abs}}
\newcommand{\ol}{\overline}
\newcommand{\wt}{\widetilde}
\newcommand{\bbP}{\mathbb{P}}
\newcommand{\bbR}{\mathbb{R}}
\newcommand{\bbZ}{\mathbb{Z}}
\newcommand{\bbT}{\mathbb{T}}
\newcommand{\cG}{\mathcal{G}}
\newcommand{\cK}{\mathcal{K}}
\newcommand{\cX}{\mathcal{X}}
\newcommand{\cY}{\mathcal{Y}}
\newcommand{\fm}{\mathfrak{m}}
\newcommand{\fX}{\mathfrak{X}}
\newcommand{\bfi}{\operatorname{\bf i}}
\newcommand{\bfh}{\operatorname{\bf h}}
\newcommand{\midarrow}{\tikz \draw[-triangle 90] (0,0) -- +(.1,0);}
\definecolor{dark-violet}{RGB}{148, 0, 211}
\begin{document}

\title{Transportation Distance between Probability Measures on the Infinite Regular Tree}
\author{Pakawut Jiradilok and Supanat Kamtue}
\date{\today}

\address{Department of Mathematics, Massachusetts Institute of Technology, Cambridge, Massachusetts}
\email[P.~Jiradilok]{pakawut@mit.edu}

\address{Department of Mathematical Sciences, Durham University, Durham, United Kingdom}
\email[S.~Kamtue]{supanat.kamtue@durham.ac.uk}

\begin{abstract}
In the infinite regular tree $\mathbb{T}_{q+1}$ with $q \in \mathbb{Z}_{\ge 2}$, we consider families $\{\mu_u^n\}$, indexed by vertices $u$ and nonnegative integers (``discrete time steps'') $n$, of probability measures such that $\mu_u^n(v) = \mu_{u'}^n(v')$ if the distances $\dist(u,v)$ and $\dist(u',v')$ are equal. Let $d$ be a positive integer, and let $X$ and $Y$ be two vertices in the tree which are at distance $d$ apart. We compute a formula for the transportation distance $W_1\!\left( \mu_X^n, \mu_Y^n \right)$ in terms of generating functions. In the special case where $\mu_u^n = \mathfrak{m}_u^n$ are measures from simple random walks after $n$ time steps, we establish the linear asymptotic formula $W_1\!\left( \mathfrak{m}_X^n, \mathfrak{m}_Y^n \right) = An + B + o(1)$, as $n \to \infty$, and give the formulas for the coefficients $A$ and $B$ in closed forms. We also obtain linear asymptotic formulas in the cases of spheres and uniform balls as the radii tend to infinity. We show that these six coefficients (two from simple random walks, two from spheres, and two from uniform balls) are related by inequalities.
\end{abstract}

\subjclass{05A16 (Primary) 05A15, 05C12, 05C21, 49Q22 (Secondary).}
\keywords{transportation distance, Wasserstein distance, optimal transport, Kantorovich problem, asymptotic formulas, coarse Ricci curvature, Ollivier-Ricci curvature, random walks on graphs, radially symmetric probability distributions, generating functions, graph statistics, infinite regular tree}

\maketitle

\section{Introduction}
\label{sec:intro}

Optimal transport theory has been used to study Ricci curvature, which is an important geometric object in Riemannian geometry. Ricci curvature is defined on manifolds via the second derivative of the metric tensor, and it captures how fast geodesics deviate from one another. There are two prominent approaches via optimal transport to give generalized notions of Ricci curvature for a larger class of metric measure spaces (with possibly non-smooth structures) in order to describe geometric nature of such spaces.

The first approach due to independent works by Sturm \cite{Sturm1,Sturm2} and by Lott-Villani \cite{LottVill09} (both of which are inspired from the earlier work by Cordero-Erausquin, McCann, and Schmuckenschl{\"a}ger \cite{cms}) is based on the convexity of an entropy functional along geodesics induced by the transport metric with the quadratic cost function. The second approach, which is relevant to this paper, is due to the following observation by Ollivier in \cite{Ollivier}, inspired from the work by von Renesse and Sturm \cite{VRSturm}. In an $n$-dimensional Riemannian manifold, given
two balls centered at points $X$ and $Y$ with the same radius $r>0$ and the distance between their centers equal to $\delta>0$ small enough, the transportation distance between the uniform probability distributions of the two balls can be asymptotically estimated by \[\delta\big(1-\frac{r^2}{2(n+2)}\Ric(v,v) + O(r^3+r^2\delta)\big),\] where the Ricci curvature is calculated at $v$, the unit tangent vector at $X$ in the direction towards $Y$. Intuitively, in the case of positive Ricci curvature, these two balls are closer to each other than their centers are. We recommend a survey by Ollivier \cite{ollivier2011visual}, which provides an excellent visualization of Ricci curvature and this phenomenon. 
Following from the above observation in manifolds, Ollivier defines a generalized notion of Ricci curvature for metric measure spaces, called \emph{coarse Ricci curvature}. This curvature, also known as \emph{Ollivier-Ricci curvature}, has been studied particularly on discrete spaces such as graphs. It has become an active research area in graph theory (see, for example, \cite{Paeng12,JL14,Bhattacharya15,BCLMP18,Benson19,MW19}) as well as in applied fields related to the study of networks (see, for example, \cite{Sandhu15,Sandhu16,Wang16,Sia19}).

In the graph setting, Ollivier-Ricci curvature is defined for a pair of different vertices $X,Y$ to be \[\kappa(X,Y):=1-\frac{W_1(\fm_X,\fm_Y)}{\dist(X,Y)}.\]
Here $\dist(\cdot,\cdot)$ is the graph distance function, given by the number of edges along a shortest path. The function $W_1(\cdot,\cdot)$ is the transportation distance between two probability measures. The measure $\fm_X$ denotes the probability measure obtained from a one-step simple random walk starting at $X$ (and $\fm_Y$ is defined in a similar fashion). 

The motivation of this paper comes from Ollivier's idea in \cite[Examples 4 and 15]{Ollivier} and \cite[Problem C]{Ollsurvey} to study this coarse Ricci curvature at ``a large scale'', i.e., we instead consider measures $\fm_X^n$ and $\fm_Y^n$ obtained by $n$-step random walks from $X$ and from $Y$. We refer interested readers to \cite{Paulin16} for analysis on this multi-step coarse Ricci curvature and \cite{BJL12} for its slight variation. 
In particular, we are interested in the asymptotic behavior of $W_1(\fm_X^n,\fm_Y^n)$ as $n\to\infty$ where $X,Y$ are fixed vertices on the infinite regular tree. The advantage of considering trees is that there is a systematic method to calculate the transportation distance, which we develop in Sections \ref{sec:prelim}, \ref{sec:gentree}, and \ref{sec:radial}. Furthermore, one may use transportation distances in infinite regular trees as upper estimates for transportation distances in Cayley graphs (by viewing trees as their universal covering graphs).

\medskip

We now describe the main results of this paper. Our first main result is Theorem \ref{thm:W-in-gen}, which presents a formula for the transportation distance $W_1 \! \left( \mu_X^n, \mu_Y^n \right)$ between two radially symmetric probability measures $\mu_X^n$ and $\mu_Y^n$ on the infinite regular tree $\mathbb{T}_{q+1}$ in terms of generating functions from the measures. Throughout this paper, $q$ denotes a positive integer at least $2$, and $\mathbb{T}_{q+1}$ denotes the infinite regular tree in which every vertex has degree $q+1$. We think of the superscript $n$ as indicating the ``discrete time step''. At the vertex $X$, we have a sequence of measures $\mu_X^0$, $\mu_X^1$, $\mu_X^2$, $\ldots$, so that at time step $n$, the measure we are considering is $\mu_X^n$. An analogous sequence of measures also exists at the vertex $Y$, and so at time step $n$, we consider the transportation distance between $\mu_X^n$ and $\mu_Y^n$. 

As a consequence of Theorem \ref{thm:W-in-gen}, Corollary \ref{cor:dist-1-formula} gives a shorter formula for the distance in the special case in which the distance $\dist(X,Y)$ between the vertices $X$ and $Y$ is $1$. We reproduce the formula here:
\[
W_1\!\left( \mu_X^n, \mu_Y^n \right) = (2q-2) \cdot [y^n] G_1\!(q,y) + \left( \frac{q+1}{q} \right) \cdot [y^n] G(q,y) - \frac{1}{q} \cdot [y^n] \gamma_0(y).
\]
The notation $[y^n]$ followed by a univariate generating function in $y$ denotes the coefficient of $y^n$ in the generating function. For the precise definitions of the three generating functions $G_1$, $G$, and $\gamma_0$ which appear in the formula above, we refer the readers to Section \ref{sec:Wgenfn}. Here, we briefly describe what they are. For $\ell, n \in \mathbb{Z}_{\ge 0}$, and for any vertices $u$ and $v$ in the graph $\mathbb{T}_{q+1}$ which are at distance $\ell$ apart, and we denote by $g(\ell, n) \ge 0$ the amount of mass at $u$ in the probability distribution $\mu_v^n$ centered at $v$ after $n$ time steps. We let $G(x,y)$ denote the generating function for these masses $g(\ell, n)$ so that
\[
G(x,y) := \sum_{\ell, n \ge 0} g(\ell, n) \cdot x^{\ell} y^n \in \mathbb{R}[\![x,y]\!].
\]
We define $G_1(x,y)$ as $(\partial/\partial x)G(x,y) \in \mathbb{R}[\![x,y]\!]$ and define $\gamma_0(y)$ as $\sum_{n \ge 0} g(0,n) y^n \in \mathbb{R}[\![y]\!]$. Our first main result expresses the transportation distance $W_1\!\left( \mu_X^n, \mu_Y^n \right)$ in terms of these three generating functions. Note that while the formula displayed above is for when $X$ and $Y$ are adjacent vertices in $\mathbb{T}_{q+1}$, Theorem \ref{thm:W-in-gen} presents a longer formula for the general case where the distance $\dist(X,Y)$ can be any positive integer $d$.

Three main examples of families $\{\mu_u^n\}$ of radially symmetric measures we consider in this paper are (i) probability measures from simple random walks, (ii) uniform sphere measures, and (iii) uniform ball measures. These are defined in Section \ref{sec:lin-asymp}. For each of these three cases we study the asymptotic behavior of $W_1\!\left(\mu_X^n, \mu_Y^n\right)$ as $n \to \infty$. Using Theorem \ref{thm:W-in-gen} as a key ingredient in our analysis, we discover that in each of the three cases the transportation distance satisfies $W_1\!\left(\mu_X^n, \mu_Y^n\right) = An + B + o(1)$, as $n \to \infty$, and we manage to compute explicit formulas for $A$ and $B$. We remark that our technique works for any radially symmetric measures with finite support. For example, one can apply our method to study transportation distances between two identical annuli.

In the simple random walk case, we have our second main result, Theorem \ref{thm:SRW-formula}, which is an exact formula of a bivariate generating function. On the infinite regular tree $\mathbb{T}_{q+1}$, consider any two vertices $u$ and $v$ of distance $\dist(u,v) = \ell$ apart, and consider a simple random walk which starts at $u$ with laziness $\alpha$. Suppose that $g(\ell, n)$ denotes the probability that we arrive at $v$ after $n$ steps. We can then form the bivariate generating function
\[
G(x,y) := \sum_{\ell = 0}^{\infty} \sum_{n=0}^{\infty} g(\ell, n) x^{\ell} y^n \in \mathbb{R}[\![x,y]\!].
\]
Our second main result, Theorem \ref{thm:SRW-formula}, presents this generating function in a closed, algebraic form. We note that our formula is similar to one given in Chapter 19 of the book of Woess' \cite{Woess}.

Observe that in the generating function $G(x,y)$ for the simple random walk case, if we specialize $x$ to $0$, we obtain $\gamma(y) := G(0,y) \in \mathbb{R}[\![y]\!]$, which is the generating function for the ``returning probabilities'' of simple random walk. This generating function is well-studied in enumerative combinatorics and probability. We devote Appendix \ref{sec:gamma} to discussing the generating function $\gamma(y)$.

Still in the simple random walk case, our third main result is the linear asymptotic formula for the distance $W_1\!\left( \fm_X^n, \fm_Y^n \right)$ as $n \to \infty$. Recall that the graph we consider here is $\cG = \mathbb{T}_{q+1}$. The two vertices $X$ and $Y$ in $\cG$ are at distance $d \ge 1$ apart, and $\fm_X^n$ and $\fm_Y^n$ are the probability measures from the simple random walks with laziness $\alpha \in [0,1)$ centered at $X$ and $Y$, respectively. For convenience, we use $\delta := \left\lfloor d/2 \right\rfloor$ and $\delta' := \left\lceil d/2 \right\rceil$. Our third main result, Theorem \ref{thm:SRW_An+B}, says that
\[
W_1(\fm_X^n, \fm_Y^n) = A^{\SRW}_{\alpha, d, q} \cdot n + B^{\SRW}_{\alpha, d, q} + o(1),
\]
as $n \to \infty$, where
\[
A^{\SRW}_{\alpha, d, q} = 2(1-\alpha)(q+1 - q^{1-\delta'} - q^{-\delta}) \cdot \frac{q-1}{(q+1)^2},
\]
and
\[
B^{\SRW}_{\alpha, d, q} = d + \frac{2(\delta q^{-\delta} + \delta' q^{1-\delta'})}{q+1} + \frac{2(q^{1-\delta} - q^{1-\delta'})}{(q+1)^2}.
\]
A striking feature of the coefficient $B^{\SRW}_{\alpha, d, q}$ is that it does not depend on $\alpha$, the laziness of the random walks.

We now turn to the case of uniform spheres. The graph $\cG$ is still the infinite regular tree $\mathbb{T}_{q+1}$, and the distance between $X$ and $Y$ is still $d \ge 1$. Let the probability measures $\sigma_X^n$ and $\sigma_Y^n$ be the uniform spheres of radius $n$ centered at $X$ and $Y$ in $\cG$. Our fourth main result, Theorem \ref{thm:sph_An+B}, says that
\[
W_1(\sigma_X^n, \sigma_Y^n) = A^{\sph}_{d, q} \cdot n + B^{\sph}_{d, q} + o(1),
\]
as $n \to \infty$, where
\[
A^{\sph}_{d,q} = \frac{2(q+1-q^{1-\delta'} - q^{-\delta})}{q+1},
\]
and
\[
B^{\sph}_{d,q} = d + \frac{-4q+2(\delta'(q-1)+1)q^{1-\delta'} + 2(\delta(q-1)+q) q^{-\delta}}{q^2-1}.
\]

In the case of uniform balls, we have our fifth main result. When $\beta_X^n$ and $\beta_Y^n$ are the uniform balls of radius $n$ centered at $X$ and $Y$, Theorem \ref{thm:ball_An+B} says that
\[
W_1(\beta_X^n, \beta_Y^n) = A^{\ball}_{d, q} \cdot n + B^{\ball}_{d,q} + o(1),
\]
as $n \to \infty$, where
\[
A^{\ball}_{d,q} = \frac{2(q+1-q^{1-\delta'}-q^{-\delta})}{q+1},
\]
and
\[
B^{\ball}_{d,q} = d + \frac{-6q-2+2(\delta'(q-1)+2)q^{1-\delta'} + 2(\delta(q-1)+q+1)q^{-\delta}}{q^2-1}.
\]
Once again, for precise definitions, we refer the readers to Section \ref{sec:lin-asymp}. We would like to remind the readers that we assume $q\ge 2$ for the above three asymptotic formulas. The analogous questions for $q=1$ are trivial because $\bbT_2=\bbZ$ is the bi-infinite path, and therefore the distances $W_1(\fm_X^n, \fm_Y^n)$, $W_1(\sigma_X^n, \sigma_Y^n)$, and $W_1(\beta_X^n, \beta_Y^n)$ are equal to $\dist(X,Y)$ for any value of $n$.

Rather surprisingly, the six coefficients $A^{\SRW}_{\alpha, d, q}$, $A^{\sph}_{d,q}$, $A^{\ball}_{d,q}$, $B^{\SRW}_{\alpha, d, q}$, $B^{\sph}_{d,q}$, $B^{\ball}_{d,q}$ are related by inequalities. Our sixth main result, Theorem \ref{thm:great-ineq}, says that
\[
0 < A^{\SRW}_{\alpha, d, q} < A^{\sph}_{d,q} = A^{\ball}_{d,q} < 2 \hspace{1 cm} \text{and} \hspace{1 cm} B^{\SRW}_{\alpha, d, q} > B^{\sph}_{d,q} > B^{\ball}_{d,q} \ge \frac{1}{3}
\]
hold for any $\alpha \in [0,1)$, $d \in \mathbb{Z}_{\ge 1}$, and $q \in \mathbb{Z}_{\ge 2}$.

It can be instructive to compare our asymptotic formulas above with the trivial upper bound for the transportation distance. In all three settings above (random walk, sphere, and ball), the probability distributions $\mu_X^n$ and $\mu_Y^n$ are supported on the balls of radius $n$ centered at $X$ and at $Y$, respectively. Let $\mathbf{1}_X$ and $\mathbf{1}_Y$ denote the point masses at $X$ and at $Y$. Then, by the triangle inequality, we have the trivial upper estimate:
\[
W_1\!\left( \mu_X^n, \mu_Y^n \right) \le W_1\!\left( \mu_X^n, \mathbf{1}_X \right) + W_1 \!\left( \mathbf{1}_X, \mathbf{1}_Y \right) + W_1\!\left( \mathbf{1}_Y, \mu_Y^n \right) \le 2n+d.
\]
Our asymptotic formulas above display the six coefficients $A^{\SRW}_{\alpha, d, q}$, $A^{\sph}_{d,q}$, $A^{\ball}_{d,q}$, $B^{\SRW}_{\alpha, d, q}$, $B^{\sph}_{d,q}$, $B^{\ball}_{d,q}$. Let us note what happens to these coefficients as we take the limit $q \to \infty$ (and leave $\alpha$ and $d$ fixed). We see that 
\begin{itemize}
\item $\lim_{q \to \infty} A^{\SRW}_{\alpha, d, q} = 2(1-\alpha)$,
\item $\lim_{q \to \infty} A^{\sph}_{d,q} = \lim_{q \to \infty} A^{\ball}_{d,q} = 2$, and
\item $\lim_{q \to \infty} B^{\SRW}_{\alpha,d,q} = \lim_{q \to \infty} B^{\sph}_{d,q} = \lim_{q \to \infty} B^{\ball}_{d,q} = d$.
\end{itemize}
In the case of simple random walks, we observe that the laziness $\alpha$ still plays a role in the asymptotic formula as $q \to \infty$. For spheres and balls, the asymptotic formula $An+B$ becomes ``closer" to the trivial upper bound of $2n+d$, as $q$ becomes large.

\medskip

Let us now briefly describe techniques we use in deriving our main results in this paper. First, we compute the Wasserstein distance via equivalent reformulations of an optimal transport problem as a minimizing cost of flow and as a maximizing Kantorovich potential. As a result, we obtain a general formula for the Wasserstein distance in Theorem \ref{thm:W1=S1+S2+S3}. Second, we realize the Wasserstein distance as a coefficient of a certain generating function. In special cases we are interested in, including the simple random walk with laziness, the expanding uniform sphere, and the expanding uniform ball, the corresponding generating functions have nice algebraic formulas. With tools from analytic combinatorics, we are able to compute the asymptotic formulas for the coefficients precisely.

We now present the outline of this paper. In Section \ref{sec:prelim}, we review $L^1$-transportation distances and combinatorial flows on graphs. The work in this section is applicable to any locally finite, connected, simple graph. In Section \ref{sec:gentree}, we restrict to the case of (possibly infinite) trees. We discuss methods to compute the transportation distance $W_1$ on trees. In Section \ref{sec:radial}, we restrict further to the case of radially symmetric measures on the infinite regular tree. In Section \ref{sec:S1S2S3}, we compute a formula for $W_1$ in terms of density values of probability measures. Using this formula, we obtain our first main result in Section \ref{sec:Wgenfn}, which expresses $W_1$ in terms of generating functions. 

Section \ref{sec:lin-asymp} is a major section. It contains our second, third, fourth, and fifth main results. We start Section \ref{sec:lin-asymp} with a review of relevant results from complex analysis, and then we prove the main results later in the section. In Section \ref{sec:examples}, we provide illustrations of our main results in the special cases when $d = 1$ and when $d = 2$, where $d$ is the graph theoretical distance between $X$ and $Y$. In Section \ref{sec:ineq}, we prove our final main result: the inequalities between the coefficients from asymptotic formulas. In Section \ref{sec:chi}, we define four interesting graph statistics, compute them in the case of the infinite regular tree, and pose questions of computing them for general graphs. We discuss relationships between our results and coarse Ricci curvature in Section \ref{sec:curvature}.

Appendix \ref{sec:appendix} shows an alternative derivation of Theorem \ref{thm:W1=S1+S2+S3} for a formula of the transportation distance $W_1\!\left( \mu_X^n, \mu_Y^n \right)$. In Appendix \ref{sec:gamma}, we focus on the asymptotic analysis of the generating function $\gamma(y)$ from the analysis of simple random walks in Subsection \ref{subsec:LA-SRW}.

\bigskip

\section{Review of $L^1$-transportation Distances and Combinatorial Flows on Graphs}
\label{sec:prelim}

In this section, we discuss the basics of optimal transport theory on graphs. In particular, we recall the definition of $L^1$-transportation distance, which is also commonly known as $1$-Wasserstein distance function $W_1$.

Throughout this paper, we fix the notation $\cG=(V,E)$ for a graph $\cG$ with the vertex set $V$ and the edge set $E$. Our graph $\cG$ is simple (i.e., it has neither loops nor multiple edges) and connected. For any vertex $v\in V$, let $N(v)$ denote the set of all neighbors of $v$ and let $\deg(v):=|N(v)|$ denote the degree of $v$. The graph $\cG$ is assumed to be locally finite, that is, every vertex has a finite degree. A \emph{measure} $\mu$ on $\cG$ is a nonnegative function $\mu:V\to \mathbb{R}_{\ge 0}$ with a finite support, i.e., $\supp(\mu):=\{v\in V:\ \mu(v)>0 \}$ is a finite set. For convenience, we write $\mu(A):=\sum_{v\in A} \mu(v)$ for any subset $A\subseteq V$, and we write $|\mu|:=\mu(V)=\sum_{v\in V} \mu(v)$.

\begin{definition}
Let $\mu$ and $\nu$ be two measures on $\cG=(V,E)$ such that $|\mu|=|\nu|$. A \emph{transport plan} $\pi$ from $\mu$ to $\nu$ is a function $\pi:V\times V \to \bbR_{\ge 0}$ satisfying the \emph{marginal constraints}:
\begin{equation} \label{eq: marginal_const}
	\sum_{v'\in V}\pi(v,v')=\mu(v) \quad \text{and} \quad
	\sum_{w'\in V}\pi(w',w)=\nu(w).
\end{equation}
For convenience, we also write $\pi(A\times B):=\sum_{v\in A}\sum_{w\in B} \pi(v,w)$ for any subsets $A,B \subseteq V$; in other words, $\pi$ is viewed as a measure on the product space $V\times V$. Then the marginal constraints can be compactly rewritten as $\pi(A\times V) = \mu(A)$ and $\pi(V\times B) = \nu(B)$ for all $A,B \subseteq V$. Moreover, we denote by $\Pi(\mu,\nu)$ the set of all such transport plans $\pi$.

The \emph{total cost} of the plan $\pi$ is given by
\begin{equation} \label{eq: cost_plan}
	\cost(\pi):= \sum_{v\in V}\sum_{w\in V} \dist(v,w)\pi(v,w),
\end{equation}
where $\dist: V\times V\to \bbZ_{\ge 0}$ is the graph distance function.
The \emph{$1$-Wasserstein distance} between $\mu$ and $\nu$ is defined as
\begin{equation} \label{eq: W1_def}
	W_1(\mu,\nu) := \min_{\pi \in \Pi(\mu,\nu)} \cost(\pi). \tag{KP}
\end{equation}
Any transport plan $\pi$ which yields the minimum cost is called \emph{an optimal transport plan}.
\end{definition}

In \eqref{eq: W1_def} above,
we used min instead of inf. The following remark explains why the minimum always exists.
\begin{remark}[Existence of optimal transport plans] \label{rem: pi_exists}

For shortened notation, we write $S_{\mu}:=\supp(\mu)$ and $S_{\nu}:=\supp(\nu)$, and we recall our assumption that they are finite sets. We note from the marginal constraints \eqref{eq: marginal_const} that $\supp(\pi):=\{(v,w)\in V\times V \mid \pi(v,w)>0\}$ is a subset of $S_{\mu}\times S_{\nu}$. 
The Kantorovich problem \eqref{eq: W1_def} can then be viewed as the following finite-dimensional linear program in the standard form:
\begin{align*}
	\textup{minimize} \qquad\,\, &\smashoperator{\sum_{(v,w)\in S_{\mu}\times S_{\nu}}} \dist(v,w)\pi(v,w) \\
	\textup{subject to} \qquad 
	&\smashoperator{\sum_{w\in S_{\nu}}} \pi(v,w)=\mu(v) \qquad \qquad \forall v\in S_{\mu}; \\
	&\smashoperator{\sum_{v\in S_{\mu}}} \pi(v,w)=\nu(w) \qquad \qquad \forall w\in S_{\nu}; \\
	&\pi(v,w) \ge 0 \qquad \qquad \forall (v,w) \in S_{\mu}\times S_{\nu}.
\end{align*} Therefore, a minimizer of this problem always exists.
\end{remark}

Intuitively, a transport plan $\pi$ describes a plan to transport the mass distribution $\mu$ to the mass distribution $\nu$, where $\pi(v,w)$ represents the amount of mass transported from the vertex $v$ to the vertex $w$, and the transportation cost per unit mass is given by the graph distance $\dist(v,w)$. The $1$-Wasserstein distance $W_1(\mu,\nu)$ measures the minimal total transportation cost between $\mu$ and $\nu$. This minimization problem is called the \emph{Kantorovich problem} as it was introduced by Kantorovich in \cite{Kantorovich}.

By viewing \eqref{eq: W1_def} as a linear program, one has an alternative expression of the $1$-Wasserstein distance given by the so-called \emph{Kantorovich dual problem}:
\begin{equation} \label{eq: Kantorovich_dual}
	W_1(\mu,\nu) =\max_{\phi\in \Lip_1(V)} \sum_{v\in V} \phi(v)(\mu(v)-\nu(v)) = \max_{\phi\in \Lip_1(V)}  (\mu-\nu)^\top \phi, \tag{DP}
\end{equation}
where functions in $\bbR^V$ are viewed as vectors, and $f^\top g := \sum_{v\in V} f(v)g(v)$.
Here the maximum is considered among all  $\phi\in\Lip_1(V)$, the class of all functions $\phi:V\to \bbR$ that are $1$-Lipschitz, i.e., $\phi(v)-\phi(w) \le d(v,w)$ for all $v,w\in V$. Any $1$-Lipschitz function $\phi$ which yields the maximum $(\mu-\nu)^\top \phi$ is called an \emph{optimal Kantorovich potential}.

Note that in order to verify the $1$-Lipschitz condition $\phi(v)-\phi(w) \le d(v,w)$ for all pairs of vertices $v,w$, it suffices to check this $1$-Lipschitz condition only for all $\{v,w\}\in E$. Thus the problem \eqref{eq: Kantorovich_dual} can be rewritten as
\begin{equation}
	W_1(\mu,\nu) = \max  \left\{\, (\mu-\nu)^\top \phi \mid \phi:V\to \bbR \text{ such that } \nabla{\phi}(v,w) \le d(v,w) \quad \forall \{v,w\}\in E \,\right\},
\end{equation}
where $\nabla$ is the \emph{discrete gradient} given by $\nabla \phi (v,w) := \phi(v)-\phi(w)$. This new problem (with constraints on the gradient along edges) has a dual problem, known as Beckmann's formula, which minimizes the cost of flows with certain constraints on the divergence. The original work by Beckmann \cite{Beckmann52} is formulated in the continuous setting, and we refer to the discussion in the book by Peyr\'e and Cuturi \cite{Peyre} for the graph setting. Before we formalize this min-cost flow problem, let us provide the definitions of flows and divergence on graphs.

\begin{definition}
Let $\cG=(V,E)$ be a connected and locally finite graph. A \emph{flow} $\psi$ on $\cG$ is a function $\psi: V\times V \to \bbR$ satisfying the following two properties:
	\begin{enumerate}
		\item $\psi(v,w)=-\psi(w,v)$ for all $v,w\in V$, and
		\item $\psi(v,w)=\psi(w,v)=0$ if $\{v,w\}\not\in E$.
	\end{enumerate}
We denote by $\mathfrak{X}(\cG)$ the set of all flows in $\cG$.
The \emph{divergence} is the linear operator $\dvg: \mathfrak{X}(\cG) \to \bbR^V$ defined as
	\begin{equation} \label{eq:dvg_defn}
		(\dvg \psi) (v) := \frac{1}{2}\sum_{w\in V} (\psi (v,w) - \psi (w,v)) = \sum_{w\in V} \psi (v,w),
	\end{equation}
for all $v\in V$ (and the sum can be restricted to those $w\in N(v)$).
	
Moreover, for a given flow $\psi\in \mathfrak{X}(\cG)$, we define $\psi^{\rm abs}:E\to \bbR$ to take the absolute value of $\psi$, that is, for every edge $e=\{v,w\}\in E$, 
\[\psi^{\rm abs}(e):=|\psi(v,w)|=|\psi(w,v)|.\] 
The \emph{(total) cost} of the flow $\psi$ is given by 
	\begin{equation}
		\cost(\psi):=\sum_{e\in E} \psi^{\rm abs}(e).
	\end{equation}
	
A function $\rho: V\to \mathbb{R}$ is called a \emph{zero-sum assignment} if $\supp(\rho)<\infty$ and $\sum_{v\in V} \rho(v)=0$. Given a zero-sum assignment $\rho$, an \emph{admissible flow} for $\rho$ is a flow $\psi\in \mathfrak{X}(\cG)$ which satisfies the \emph{charge-preserving equation} $\dvg \psi = \rho$, or written explicitly as
	\begin{equation} \label{eq: charge-preserv_eq}
		\sum_{w \in N(v)} \psi(v,w) = \rho(v) \qquad \forall v\in V.
	\end{equation}
\end{definition}

Then the Beckmann's problem (or the \emph{min-cost flow} problem) is to minimize the cost of all admissible flows for the assignment $\mu-\nu$, and it can be stated as follows.
\begin{equation} \label{eq: flow_problem}
	W_1(\mu, \nu) = \min_{\dvg \psi=\mu-\nu} \cost(\psi). \tag{FP}
\end{equation}

A flow $\psi\in \mathfrak{X}(\cG)$ can be thought of as a discrete vector field in $\cG$, where $\psi(v,w)$ represents the tangent vector at $v$ in the direction of $v\to w$. For a given $\phi\in \bbR^V$, the gradient $\nabla \phi$ can be regarded as a flow (by a trivial extension $\nabla\phi(v,w):=0$ when $\{v,w\}\not\in E$). The divergence is the adjoint operator of the gradient in the sense that
\[\langle \nabla \phi, \psi \rangle_{\bbR^{V\times V}} = \frac{1}{2} \sum_{\substack{v,w \\ \{v,w\}\in E}} (\phi(v)-\phi(w)) \psi(v,w) =\sum_{v} \left(\phi(v) \sum_{w\in N(v)} \psi(v,w) \right)=\langle \phi, \dvg \psi \rangle_{\bbR^{V}},\]
where $\langle\cdot,\cdot\rangle_{\bbR^{V\times V}}$ and $\langle\cdot,\cdot\rangle_{\bbR^{V}}$ denote the inner products for $\bbR^{V\times V}$ and for $\bbR^V$, respectively.

\begin{remark} \label{rem: pot_flow_exists}
With a similar argument as the one in Remark \ref{rem: pi_exists}, we can consider both optimization problems \eqref{eq: Kantorovich_dual} and \eqref{eq: flow_problem} on a finite subgraph of $\cG$. Thus they are finite-dimensional linear optimization problems, and the existence of an optimal Kantorovich potential $\phi$ and a minimizing flow $\psi$ is guaranteed.
\end{remark}

Another important aspect of the duality $\min\limits_{\pi \in \Pi(\mu,\nu)} \cost(\pi)=\max\limits_{\phi\in \Lip_1(V)}  (\mu-\nu)^\top \phi$ is the following complementary slackness theorem which relates optimal transport plans to optimal Kantorovich potentials.

\begin{theorem}[Complementary slackness] \label{thm: cs}
Let $\cG=(V,E)$ be a locally finite, connected graph and $\mu,\nu$ be measures on $\cG$ such that $|\mu|=|\nu|$. Let $\pi_{opt}$ be an optimal transport plan and $\phi_{opt}$ be an optimal Kantorovich potential. Then for any pair of vertices $v,w\in V$, one has the following implication:
\begin{equation}
	\text{if } \pi_{opt}(v,w)>0, \text{then } \phi_{opt}(v)-\phi_{opt}(w) = \dist(v,w).
\end{equation}
\end{theorem}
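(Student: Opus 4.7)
The plan is to derive the implication from strong duality of the pair \eqref{eq: W1_def} and \eqref{eq: Kantorovich_dual}, which is available because, as noted in Remarks \ref{rem: pi_exists} and \ref{rem: pot_flow_exists}, both problems reduce to finite-dimensional linear programs over $S_\mu$ and $S_\nu$. Concretely, strong duality asserts that $\cost(\pi_{opt}) = (\mu-\nu)^\top \phi_{opt}$, and the core of the argument will be to upgrade this single scalar equality into a per-pair slackness condition.

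First I would rewrite the dual objective using the marginal constraints \eqref{eq: marginal_const}. Substituting $\mu(v) = \sum_{w} \pi_{opt}(v,w)$ and $\nu(w) = \sum_{v} \pi_{opt}(v,w)$ (each a finite sum over $S_\mu \times S_\nu$) and relabeling the dummy index in the second term, I obtain
\[
(\mu-\nu)^\top \phi_{opt} \;=\; \sum_{v,w \in V} \bigl(\phi_{opt}(v) - \phi_{opt}(w)\bigr)\,\pi_{opt}(v,w).
\]
On the other hand, by definition $\cost(\pi_{opt}) = \sum_{v,w} \dist(v,w)\,\pi_{opt}(v,w)$. Subtracting and invoking strong duality yields the key identity
\[
\sum_{v,w \in V} \Bigl( \dist(v,w) - \bigl(\phi_{opt}(v) - \phi_{opt}(w)\bigr) \Bigr)\, \pi_{opt}(v,w) \;=\; 0.
\]

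Next I would exploit the signs. Because $\phi_{opt} \in \Lip_1(V)$, each factor $\dist(v,w) - (\phi_{opt}(v) - \phi_{opt}(w))$ is nonnegative, and $\pi_{opt}(v,w) \ge 0$ by definition. Every summand in the displayed identity is therefore nonnegative, and since they sum to zero, each summand vanishes individually. In particular, whenever $\pi_{opt}(v,w) > 0$ the accompanying factor must be zero, which gives exactly $\phi_{opt}(v) - \phi_{opt}(w) = \dist(v,w)$, the claimed conclusion.

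The only nontrivial ingredient beyond bookkeeping is strong duality between \eqref{eq: W1_def} and \eqref{eq: Kantorovich_dual}, and this is not a real obstacle here: it follows immediately from the LP duality theorem applied to the finite linear program written out in Remark \ref{rem: pi_exists}. If one prefers to avoid quoting strong duality outright, an alternative route is to first establish weak duality $\cost(\pi_{opt}) \ge (\mu-\nu)^\top \phi_{opt}$ (a one-line manipulation using the $1$-Lipschitz bound inside the sum over $\pi_{opt}$) and then to combine it with the hypothesized optimality in both problems to force equality, after which the argument above proceeds verbatim.
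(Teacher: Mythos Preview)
Your argument is correct and is exactly the standard derivation of complementary slackness from strong LP duality: rewrite the dual objective via the marginals, subtract to obtain a sum of nonnegative terms equal to zero, and conclude term-by-term. There is nothing to fix.

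Note, however, that the paper does not actually supply a proof of Theorem~\ref{thm: cs}; it is stated as a known result (a direct consequence of LP duality for the finite program described in Remark~\ref{rem: pi_exists}) and used later in Remark~\ref{rem: cs_flow}. So there is no ``paper's own proof'' to compare against---your write-up is simply filling in a proof the authors omitted, and it is the natural one.

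One small remark on your closing paragraph: the ``alternative route'' you sketch is not really an alternative. The step ``combine weak duality with optimality in both problems to force equality'' is precisely strong duality, which the paper has already asserted in the displayed equation~\eqref{eq: Kantorovich_dual} (equality of the primal minimum and dual maximum). So you are in any case relying on that equality; the only choice is whether to cite it from the paper or from the LP duality theorem.
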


\bigskip

\section{Transport Plans, Flows, and Potential Functions on a Tree} 
\label{sec:gentree}

In this section and henceforth, we restrict our graph $\cG=(V,E)$ to be a (possibly infinite) tree. Recall from the previous section that the Wasserstein distance $W_1(\mu,\nu)$ between measures $\mu$ and $\nu$ with $|\mu|=|\nu|$ can be computed via either one of the three optimization problems, namely, min-cost plan \eqref{eq: W1_def}, max potential \eqref{eq: Kantorovich_dual}, or min-cost flow \eqref{eq: flow_problem}:
\begin{equation} \label{eq: W1_threeways}
	W_1(\mu,\nu) = \min_{\pi\in \Pi(\mu,\nu)} \cost(\pi) = \max_{\phi\in \Lip_1(V)}  (\mu-\nu)^\top \phi = \min_{\dvg \psi=\mu-\nu} \cost(\psi).
\end{equation}
The min-cost flow characterization is particularly useful for calculating $W_1(\mu,\nu)$ when our graph $\cG$ is a tree because in such a case there exists a unique flow $\psi$ with $\dvg \psi= \mu-\nu$.

\begin{proposition} \label{prop: unique_flow_tree}
Let $\cG=(V,E)$ be a tree and let $\rho:V\to \mathbb{R}$ be a zero-sum assignment. Then there exists a unique flow $\psi$ with $\dvg \psi=\rho$. More explicitly, for any edge $e=\{x,y\}\in E$, the value of $\psi(x,y)$ is uniquely determined by 
	\begin{equation} \label{eq: flow_tree_defn}
		\psi(x,y) = \sum_{v\in (\cG\backslash \{e\})_x} \rho(v),
	\end{equation}
where $(\cG\backslash \{e\})_x$ denotes the connected component of $\cG\backslash \{e\}$ that contains $x$.
\end{proposition}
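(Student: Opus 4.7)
The plan is to verify existence by direct construction and then derive uniqueness from a boundary-flow accounting argument. Both halves rest on the decisive feature of a tree: removing any edge $e=\{x,y\}$ disconnects $\cG$ into exactly two components, $(\cG\setminus\{e\})_x$ and $(\cG\setminus\{e\})_y$, which partition $V$.

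For existence, I would take the stated formula as the definition of $\psi(x,y)$ for $\{x,y\}\in E$, and set $\psi(x,y):=0$ otherwise. Antisymmetry is immediate from the partition: $\psi(x,y)+\psi(y,x)=\sum_{v\in V}\rho(v)=0$, using the zero-sum hypothesis on $\rho$. To check $\dvg\psi=\rho$, I would fix a vertex $v$, regard the tree as rooted at $v$, and write $V=\{v\}\sqcup\bigsqcup_{w\in N(v)} T_w$, where $T_w$ is the component of $\cG\setminus\{\{v,w\}\}$ containing $w$. By the formula and antisymmetry, $\psi(v,w)=-\rho(T_w)$, hence
\[
\dvg\psi(v)=\sum_{w\in N(v)}\psi(v,w)=-\sum_{w\in N(v)}\rho(T_w)=-\Bigl(\sum_{u\in V}\rho(u)-\rho(v)\Bigr)=\rho(v),
\]
again using the zero-sum condition.

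For uniqueness, let $\psi'$ be any flow with $\dvg\psi'=\rho$. Fix an edge $e=\{x,y\}$ and set $A:=(\cG\setminus\{e\})_x$. I would sum the identity $\dvg\psi'(v)=\rho(v)$ over $v\in A$. On the right, $\sum_{v\in A}\rho(v)$ is a finite sum because $\supp\rho$ is finite. On the left, every edge with both endpoints in $A$ contributes $\psi'(v_1,v_2)+\psi'(v_2,v_1)=0$ by antisymmetry, and because $\cG$ is a tree the only edge with exactly one endpoint in $A$ is $e$ itself. So the telescoping leaves only the single term $\psi'(x,y)$, forcing $\psi'(x,y)=\sum_{v\in A}\rho(v)$, which matches $\psi$.

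The main obstacle is making the summation on the left rigorous when $A$ is infinite: the double sum $\sum_{v\in A}\sum_{w\in N(v)}\psi'(v,w)$ only rearranges via cancellation if the interior-edge contributions converge unconditionally. I would handle this through a finite exhaustion $F_1\subset F_2\subset\cdots$ of $A$, where $F_N$ consists of vertices of $A$ within graph distance $N$ of $x$. On each finite $F_N$ the cancellation is immediate, and $\sum_{v\in F_N}\dvg\psi'(v)$ equals $\psi'(x,y)$ plus the sum of $\psi'$ over the boundary edges of $F_N$ inside $A$. Once $N$ is large enough that $F_N\supseteq A\cap\supp\rho$, the left-hand charge sum stabilizes at $\sum_{v\in A}\rho(v)$, and the boundary contributions must vanish; this is legitimate in the setting of the paper because, by Remark~\ref{rem: pot_flow_exists}, admissible flows can be taken to be supported on a finite subgraph, which is the regime in which the uniqueness claim is used to evaluate \eqref{eq: flow_problem}.
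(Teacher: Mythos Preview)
Your uniqueness argument is exactly the paper's: sum $\dvg\psi'=\rho$ over the component $(\cG\setminus\{e\})_x$ and let antisymmetry cancel all interior edges, leaving only $\psi'(x,y)$. The paper writes this in one line and does not separately verify existence; you add an explicit existence check (root at $v$, partition into subtrees $T_w$, compute the divergence), which is a welcome addition but not a different idea.

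Where you go beyond the paper is in flagging the rearrangement issue when $(\cG\setminus\{e\})_x$ is infinite. The paper simply asserts the double-sum cancellation and remarks afterward that finite support of $\rho$ forces finite support of $\psi$; you instead propose a finite exhaustion $F_N$ and invoke Remark~\ref{rem: pot_flow_exists} to restrict to finitely supported flows. This is the honest thing to do, since without a support hypothesis uniqueness genuinely fails on infinite trees (e.g., the unit flow $\psi(n,n+1)=1$ on $\mathbb{T}_2=\mathbb{Z}$ is divergence-free but nonzero). Your resolution is pragmatic and sufficient for every use of the proposition in the paper, though strictly speaking it proves uniqueness only within the class of finitely supported flows rather than the proposition as literally stated; the paper's own proof has the same implicit restriction.
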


Let us remark that in the case of an infinite tree, the assumption that $\rho$ has a finite support guarantees that the flow $\psi$ also has a finite support.

\begin{proof}
Given any edge $e=\{x,y\}\in E$, removing such an edge from the tree $\cG$ divides it into two connected components. One component contains $x$, and the other contains $y$. We denote these components by $(\cG\backslash \{e\})_x$ and $(\cG\backslash \{e\})_y$, respectively.  
	
For a flow $\psi\in \fX(\cG)$ satisfying $\dvg \psi = \rho$, or explicitly \eqref{eq: charge-preserv_eq}, we have $$\sum_{v\in (\cG\backslash \{e\})_x} \rho(v)= \sum_{v\in (\cG\backslash \{e\})_x} \sum_{w \in N(v)} \psi(v,w)= \psi(x,y),$$
where the last equation is because each edge $\{v,w\}$ in $(\cG\backslash \{e\})_x$ is counted twice in the double sum, and $\psi(v,w)+\psi(w,v)=0$ by the skew-symmetric property of the flow $\psi$. Thus the equation \eqref{eq: flow_tree_defn} is proved, and it implies the uniqueness of the flow $\psi$.
\end{proof}

As a consequence of Proposition \ref{prop: unique_flow_tree} and the equation \eqref{eq: W1_threeways}, one can calculate the Wasserstein distance between two measures $\mu$ and $\nu$ such that $|\mu|=|\nu|$ on a tree $\cG=(V,E)$ as the cost of the unique flow $\psi$ that $\dvg \psi=\mu-\nu$:
\begin{equation} \label{eq: W_cal_tree}
	W_1(\mu,\nu) = \sum_{e\in E} \psi^{\rm abs}(e).
\end{equation}

Next we provide an alternative way to compute $W_1(\mu,\nu)$ via the so-called \emph{good potential functions} with respect to the unique flow $\psi$ with $\dvg \psi =\mu-\nu$.

\begin{definition} \label{def: good_potential}
Let $\cG=(V,E)$ be a tree. Consider a zero-sum assignment $\rho:V\to \mathbb{R}$ and the unique flow $\psi$ with $\dvg \psi=\rho$ as given in Proposition \ref{prop: unique_flow_tree}. A function $\Phi:V\to \bbR$ is called a \emph{good potential function} with respect to $\psi$ if for every edge $\{x,y\} \in E$,
	\begin{align} \label{eq: good_potential}
		\Phi(x)-\Phi(y) = 
		\begin{cases}
			1 & \text{ if } \psi(x,y)>0,\\
			-1 & \text{ if } \psi(x,y)<0,
		\end{cases}
	\end{align}
and $\Phi(x)-\Phi(y)\le 1$ if $\psi(x,y)=0$.
\end{definition}
In words, if the flow travels from a vertex $x$ to one of its neighbor $y$, then $x$ must have more potential than $y$. Note that if there is no flow between $x$ and $y$, we do not require that $\Phi(x)=\Phi(y)$ but we only require $\Phi(x)-\Phi(y)\le 1$ to ensure that $\Phi$ is a $1$-Lipschitz function.

Although good potential functions $\Phi$ are not unique, the following lemma asserts that the value of $\rho^\top \Phi$ is independent of the choice of good $\Phi$. 

\begin{lemma}
Let $\cG=(V,E)$ be a tree. Consider a zero-sum assignment $\rho:V\to \mathbb{R}$ and the unique flow $\psi$ with $\dvg \psi=\rho$. Then the value of $\rho^\top \Phi$ is constant for all good potential functions $\Phi$.
\end{lemma}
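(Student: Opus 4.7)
The plan is to show the stronger statement that $\rho^\top \Phi = \cost(\psi)$ for every good potential function $\Phi$. Since the right-hand side does not depend on $\Phi$ at all, independence follows immediately. The key tool is already sitting in the excerpt: the adjoint relation $\langle \nabla \Phi, \psi \rangle_{\bbR^{V \times V}} = \langle \Phi, \dvg \psi \rangle_{\bbR^V}$ between the discrete gradient and the divergence, specialized to the unique flow $\psi$ with $\dvg \psi = \rho$.

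First, I would expand $\rho^\top \Phi = \sum_v \Phi(v) \rho(v) = \sum_v \Phi(v) \sum_{w \in N(v)} \psi(v,w)$ and reorganize the double sum as a sum over unordered edges. Using the skew-symmetry $\psi(v,w) = -\psi(w,v)$, each edge $\{v,w\} \in E$ contributes exactly $\psi(v,w)(\Phi(v) - \Phi(w))$, with the value of the product independent of which orientation one uses. This is just the adjoint identity written out, but phrasing it edge-by-edge is what sets up the next step. Finiteness is not an issue: because $\rho$ has finite support, Proposition \ref{prop: unique_flow_tree} guarantees $\psi$ has finite support, so only finitely many edges contribute non-trivially and all sums are legitimate.

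Second, I would analyze the per-edge contribution using Definition \ref{def: good_potential}. For each edge $\{v,w\}$, orient it so that $\psi(v,w) \ge 0$. If $\psi(v,w) > 0$, then by \eqref{eq: good_potential}, $\Phi(v) - \Phi(w) = 1$, so the contribution is $\psi(v,w) = |\psi(v,w)| = \psi^{\rm abs}(\{v,w\})$. If $\psi(v,w) = 0$, then the product $\psi(v,w)(\Phi(v)-\Phi(w))$ is $0 = \psi^{\rm abs}(\{v,w\})$, irrespective of the value of $\Phi(v) - \Phi(w)$ (which is where the flexibility in the choice of $\Phi$ is absorbed). Summing over all edges gives $\rho^\top \Phi = \sum_{e \in E} \psi^{\rm abs}(e) = \cost(\psi)$.

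There is really no hard step here; the argument is essentially an application of the adjoint identity together with a case split on whether the flow through an edge vanishes. The only mildly delicate point is making sure that the freedom afforded by the inequality $\Phi(v) - \Phi(w) \le 1$ on zero-flow edges disappears in the computation, which it does precisely because it is multiplied against $\psi(v,w) = 0$. As a bonus, the proof also shows $\rho^\top \Phi = \cost(\psi)$, which by \eqref{eq: W_cal_tree} equals $W_1(\mu, \nu)$ when $\rho = \mu - \nu$; this will be useful later for building Kantorovich potentials on trees.
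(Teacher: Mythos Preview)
Your proof is correct, but it takes a genuinely different route from the paper's. The paper argues by comparing two good potentials $\Phi_1$ and $\Phi_2$: after deleting all zero-flow edges, the tree breaks into components $\cK_i$; on each $\cK_i$ the relation \eqref{eq: good_potential} forces $\Phi_2-\Phi_1$ to be a constant $c_i$, and each component carries zero total charge (by Proposition~\ref{prop: unique_flow_tree}), so $\rho^\top\Phi_2-\rho^\top\Phi_1=\sum_i c_i\cdot 0=0$. Your approach instead goes straight for the identity $\rho^\top\Phi=\sum_{e\in E}\psi^{\rm abs}(e)$ via the gradient--divergence adjointness, absorbing the ambiguity of $\Phi$ on zero-flow edges into a factor of $\psi(v,w)=0$. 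This is shorter and, as you note, simultaneously proves the next lemma (Lemma~\ref{lem: J=flow}), for which the paper separately constructs an explicit good potential using the auxiliary function $\sigma(v,e)$. The paper's decomposition argument, on the other hand, isolates more transparently \emph{where} the freedom in choosing $\Phi$ lives (namely, one additive constant per zero-flow component) and \emph{why} it is harmless (zero net charge on each such component).
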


\begin{proof}
Let us remove all edges $e=\{x,y\}$ in the tree $\cG$ such that $\psi(x,y)=0$. The resulting graph consists of connected components, which we denote by $\cK_1,\cK_2,...$ (and we know that all but finitely many of them are isolated vertices because there are only a finite number of edges with nonzero flow).

For each component $\cK_i$, one can observe from the construction \eqref{eq: good_potential} that the values within $\cK_i$ of any two good potential functions differ only by a constant, that is, for good $\Phi_1$ and $\Phi_2$, there is $c_i\in \bbR$ such that for all $x\in \cK_i$, $\Phi_2(x)-\Phi_1(x)=c_i$. 
Moreover, the total charge within $\cK_i$ is equal to zero: $\sum_{v\in V(\cK_i)} \rho(v)=0$ due to the conservation of charge. Consequently, we deduce that the value of $\rho^\top \Phi:=\sum_{v\in V} \Phi(v)\rho(v)$ is independent of the choice of good $\Phi$, that is,
\[\rho^\top \Phi_2-\rho^\top \Phi_1=\sum_i \sum_{v\in V(\cK_i)} (\Phi_2(v)-\Phi_1(v))\rho(v)=\sum_i \left(c_i\sum_{v\in V(\cK_i)} \rho(v)\right) = 0.\]
\end{proof}

In fact, the following lemma asserts that every good $\Phi$ is an optimal Kantorovich potential, that is, the value of $\rho^\top \Phi$ equals the cost of the flow $\psi$. This gives an alternative formulation for the Wasserstein distance on a tree in terms of $\Phi$.

\begin{lemma} \label{lem: J=flow}
Let $\cG=(V,E)$ be a tree. Consider a zero-sum assignment $\rho:V\to \mathbb{R}$ and the unique flow $\psi$ with $\dvg \psi =\rho$. Then every good potential function $\Phi:V\to \bbR$ defined via \eqref{eq: good_potential} must satisfy
\begin{equation} \label{eq: potential-edge}
\rho^\top \Phi:=\sum_{v\in V} \Phi(v) \rho(v) = \sum_{e\in E} \psi^{\rm abs}(e).
\end{equation}
Consequently, for any given measures $\mu$ and $\nu$ with $|\mu|=|\nu|$,
we have $W_1(\mu,\nu)=(\mu-\nu)^\top \Phi$ for all good potential functions $\Phi$. Thus all these $\Phi$ are optimal Kantorovich potentials.
\end{lemma}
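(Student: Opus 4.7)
The plan is to exploit the adjoint identity between the discrete gradient and divergence, already spelled out in the displayed formula preceding Remark~2.6, together with the defining properties of a good potential function. First, I would apply that identity with $\phi = \Phi$, obtaining
\[
\rho^\top \Phi \;=\; \langle \Phi, \dvg \psi \rangle_{\bbR^V} \;=\; \langle \nabla \Phi, \psi \rangle_{\bbR^{V\times V}} \;=\; \tfrac{1}{2}\!\!\sum_{\substack{v,w \\ \{v,w\}\in E}}\!\!(\Phi(v)-\Phi(w))\,\psi(v,w).
\]
The task then reduces to showing that for each ordered pair $(v,w)$ with $\{v,w\}\in E$ one has the \emph{pointwise} identity $(\Phi(v)-\Phi(w))\,\psi(v,w) = \psi^{\rm abs}(\{v,w\})$. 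I would verify this by reading off the three cases in Definition~3.3: when $\psi(v,w)>0$, the gradient equals $+1$ and so the product is $\psi(v,w) = |\psi(v,w)|$; when $\psi(v,w)<0$, the gradient equals $-1$ and the product is $-\psi(v,w) = |\psi(v,w)|$; and when $\psi(v,w)=0$, both factors conspire to give $0$. Since each unordered edge is counted exactly twice by the double sum over ordered endpoints, the $\tfrac{1}{2}$ cancels and I obtain $\rho^\top \Phi = \sum_{e\in E}\psi^{\rm abs}(e)$, which is equation \eqref{eq: potential-edge}.

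For the consequence, I would specialize $\rho = \mu - \nu$. Equation \eqref{eq: W_cal_tree} already identifies $W_1(\mu,\nu)$ with $\sum_{e\in E}\psi^{\rm abs}(e)$ in the tree setting, so combining this with the identity just established yields $(\mu-\nu)^\top \Phi = W_1(\mu,\nu)$. To conclude that $\Phi$ is an optimal Kantorovich potential, it suffices (by \eqref{eq: Kantorovich_dual}) to verify $\Phi \in \Lip_1(V)$. Since $\cG$ is a tree, $1$-Lipschitzness for the graph distance follows from $1$-Lipschitzness along each edge, which I would read off from Definition~3.3: the nonzero-flow cases give $|\Phi(x)-\Phi(y)|=1$ directly, while the zero-flow case asserts $\Phi(x)-\Phi(y)\le 1$, and applying the same inequality with the roles of $x,y$ reversed (which is legitimate because $\psi(y,x)=-\psi(x,y)=0$ as well) yields the two-sided bound $|\Phi(x)-\Phi(y)|\le 1$.

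I do not anticipate a serious obstacle here; the argument is essentially a summation-by-parts once the correspondence between edges and ordered pairs is handled carefully. The only mildly delicate step is the symmetry observation in the previous paragraph, needed because Definition~3.3 phrases the zero-flow condition asymmetrically; invoking the skew-symmetry of $\psi$ repairs this without difficulty.
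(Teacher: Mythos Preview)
Your proof is correct and takes a genuinely different route from the paper's. The paper first invokes the preceding lemma (that $\rho^\top\Phi$ is the same for every good potential), then constructs one explicit good potential via the auxiliary function $\sigma(v,e)$ and verifies \eqref{eq: potential-edge} for that particular $\Phi$ by a double-counting argument. Your approach instead applies the gradient--divergence adjoint identity directly and checks the pointwise equality $(\Phi(v)-\Phi(w))\,\psi(v,w)=\psi^{\rm abs}(\{v,w\})$ from Definition~\ref{def: good_potential}; this works uniformly for every good potential and does not rely on the constancy lemma at all. Your argument is shorter and more conceptual---it is really just summation by parts---while the paper's construction has the side benefit of exhibiting an explicit good potential, though that explicit formula is not used elsewhere. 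Your handling of the zero-flow case for the $1$-Lipschitz check (using the skew-symmetry of $\psi$ to upgrade the one-sided inequality to a two-sided one) is exactly right.
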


\begin{proof}
Since $\rho^\top \Phi=\sum_{v\in V} \Phi(v)\rho(v)$ is constant for all good potential functions $\Phi$, it suffices to prove \eqref{eq: potential-edge} for at least one good potential function $\Phi:V\to \bbR$. We construct a good potential function explicitly via the following two steps.
\begin{enumerate}
\item[i)] Let $\sigma: V\times E \to \{0,+1,-1\}$ be defined as follows. For any vertex $v$ and any edge $e=\{x,y\}$,
\begin{align} \label{eq: sigma_v_e}
	\sigma(v,e):= \begin{cases}
		0 &\text{ if } \psi(x,y)=0, \\
		+1 &\text{ if } \psi(x,y)>0 \text{ and } v\in (\cG\backslash\{e\})_{x},\\
		&\text{ or } \psi(x,y)<0 \text{ and } v\in (\cG\backslash\{e\})_{y},\\
		-1 &\text{ if } \psi(x,y)>0 \text{ and } v\in (\cG\backslash\{e\})_{y},\\
		&\text{ or } \psi(x,y)<0 \text{ and } v\in (\cG\backslash\{e\})_{x}.\\
	\end{cases}
\end{align}

\item[ii)] Assign $\Phi(v):= \frac{1}{2} \sum_{e\in E} \sigma(v,e)$ for all $v\in V$.
\end{enumerate}

We first check that $\Phi$ is a good potential function. For any edge $\{v,w\}\in E$, we have
$$\Phi(v)-\Phi(w)=\frac{1}{2}\sum_{e\in E} (\sigma(v,e)-\sigma(w,e)) = \frac{1}{2}(\sigma(v,\{v,w\})-\sigma(w,\{v,w\})).$$
We can see from \eqref{eq: sigma_v_e} that it satisfies \eqref{eq: good_potential}. Therefore, $\Phi$ is indeed a good potential function.

For each edge $e=\{x,y\}$, we have from \eqref{eq: flow_tree_defn} that 
$$\psi(x,y)=\sum_{v\in (\cG\backslash \{e\})_x} \rho(v) = - \sum_{v\in (\cG\backslash \{e\})_y} \rho(v) = \frac{1}{2} \sum_{v\in (\cG\backslash \{e\})_x} \rho(v) - \frac{1}{2} \sum_{v\in (\cG\backslash \{e\})_y} \rho(v).$$
Applying \eqref{eq: sigma_v_e}, we obtain for $e=\{x,y\}$,\\
in the case of $\psi(x,y)>0$ that
$$0<\psi(x,y)= \frac{1}{2} \sum_{v\in V} \rho(v) \sigma(v,e),$$
and in the case of $\psi(x,y)<0$ that
$$0>\psi(x,y)= -\frac{1}{2} \sum_{v\in V} \rho(v) \sigma(v,e).$$
Therefore, we always have $$\psi^{\rm abs}(e)=|\psi(x,y)|= \frac{1}{2} \sum_{v\in V} \rho(v) \sigma(v,e),$$ which is also true in the case of $\psi(x,y)=0$.

The cost of the flow $\psi$ can then be calculated as
\begin{align*}
\sum_{e\in E} \psi^{\rm abs}(e) &= \frac{1}{2} \sum_{e\in E} \sum_{v\in V} \rho(v) \sigma(v,e)\\
&= \frac{1}{2} \sum_{v\in V} \rho(v)  \sum_{e\in E} \sigma(v,e) = \sum_{v\in V} \rho(v)\Phi(v),
\end{align*}
which proves the equation \eqref{eq: potential-edge}. In view of \eqref{eq: W_cal_tree}, it follows immediately that $W_1(\mu,\nu)=(\mu-\nu)^\top \Phi$.
\end{proof}

\begin{remark} \label{rem: cs_flow}
Let us consider any pair of vertices $v,w\in V$ such that $\pi_{opt}(v,w)>0$ for some optimal transport plan $\pi_{opt}$.
By the complementary slackness theorem (Theorem \ref{thm: cs}), we know that $\Phi(v)-\Phi(w)=\dist(v,w)$. If the vertices along the path $P_{vw}$ is labeled by $v=v_0, v_1, \ldots, v_\ell=w$ where $\ell:=\dist(v,w)$, then
\[ \ell = \Phi(v)-\Phi(w) = \sum_{i=0}^{\ell-1} \Phi(v_{i})-\Phi(v_{i+1}).\]
Since $\Phi(v_{i})-\Phi(v_{i+1})\in [-1,1]$, we deduce that $\Phi(v_{i})-\Phi(v_{i+1})=1$ for all $i$, which means $\psi(v_{i},v_{i+1})>0$. In other words, the flow $\psi$ travels from $v$ to $w$ whenever there is an optimal transport plan $\pi_{opt}$ such that $\pi_{opt}(v,w)>0$.
\end{remark}

Let us conclude with the following two methods to calculate $W_1(\mu,\nu)$ on a locally finite tree $\cG=(V,E)$, for any given finitely supported measures $\mu$ and $\nu$ such that $|\mu|=|\nu|$.

\, \\ \noindent \textbf{Calculation via the flow:}
\begin{enumerate}
	\item Define the zero-sum assignment $\rho:V\to \bbR$ by $\rho:=\mu-\nu$.
	\item Define the unique flow $\psi$ with $\dvg \psi = \rho$ given by
	$$\psi(x,y) = \sum_{v\in (G\backslash \{e\})_x} \rho(v),$$ for every edge $e=\{x,y\}\in E$
	(see Proposition \ref{prop: unique_flow_tree}).
	\item Compute $W_1(\mu,\nu)$ by the cost of the flow $\psi$, that is,
	$$W_1(\mu,\nu)= \sum_{e\in E} \psi^{\rm abs}(e).$$
\end{enumerate}

\noindent \textbf{Calculation via the potential:} \\
Do steps (1) and (2), but not (3), before doing the following two steps.
\begin{enumerate} \setcounter{enumi}{3}	 	
	\item Define a good potential function $\Phi:V\to \bbR$ which satisfies
	\begin{align*}
		\Phi(x)-\Phi(y) = 
		\begin{cases}
			0 & \text{ if } \psi(x,y)=0,\\
			1 & \text{ if } \psi(x,y)>0,\\
			-1 & \text{ if } \psi(x,y)<0.
		\end{cases}
	\end{align*}
	\item Compute $W_1(\mu,\nu)$ by the cost of the potential function $\Phi$, that is,
	\[W_1(\mu,\nu)= \rho^\top\Phi=\sum_{v\in V} \Phi(v)\rho(v).\]
\end{enumerate}

In the following example, we will demonstrate both methods of calculation.
\begin{example}
Consider a tree $\cG=(V,E)$ given in Figure~\ref{fig: tree_example_1}. The number labeled at each vertex shows the value of the assignment $\rho=\mu-\nu$ (and for non-labeled vertices, their values are $0$). Note that $\rho$ has zero sum: $\sum_{v\in V} \rho(v)=0$.

Now we are going to determine the unique flow $\psi$ with $\dvg \psi =\rho$, and we will then calculate $W_1(\mu,\nu)$ via the flow. We pick an edge in $\cG$, for example, the edge $e=\{y,z\}$. After we remove the edge $e$, the total charge in the component $(\cG\backslash\{e\})_{z}$ is equal to $+1+1+1=+3$ (and the total charge in the other component $(\cG\backslash\{e\})_{y}$ is $-3$). We assign $\psi(z,y)=3$, and label $\psi^{\rm abs}(e)=3$ on this edge with an arrow $z\to y$ to indicate that the potential at $z$ is higher than at $y$. 
	
There could possibly be an edge such that after removing it, the total charge in each of the two components is exactly zero; for example, the edge $\{x,y\}$ satisfies this property. In this case, $\psi(x,y)=0=\psi^{\rm abs}(\{x,y\})$, which indicates the absence of flow between $x$ and $y$. Even in the case that $\cG$ is an infinite tree, since $\mu$, $\nu$ are assumed to have finite supports, there must be only finitely many edges with nonzero flow. We proceed to label all such edges with the size and the direction of the flow as shown in Figure \ref{fig: tree_example_2}. The Wasserstein distance is then given by the sum of the size of the flow $\psi$ on all edges:
\[W_1(\mu,\nu)=1+1+1+1+3+3+1+1=12.\]

Now we are going to calculate $W_1(\mu,\nu)$ via the potential. We fix a potential value at an arbitrary vertex, say, we set the bottom left vertex to have $\Phi(v_0)=0$ as shown in Figure~\ref{fig: tree_example_3}. Next we find potential values for all vertices by tracing along edges starting from $v_0$. The potential value of a succeeding vertex either increases by one (if it is on the upstream), decreases by one (if it is on the downstream), or remains unchanged (if there is no flow from a preceding vertex). The resulting potential is presented in Figure~\ref{fig: tree_example_3}. The Wasserstein distance is the sum of the products between charge and potential at each vertex (and vertices with no charge may be ignored):
\[W_1(\mu,\nu)=4(+1)+3(+1)+2(-2)+0(-1)+1(-2)+3(+1)+4(+1)+4(+1)=12.\]
	
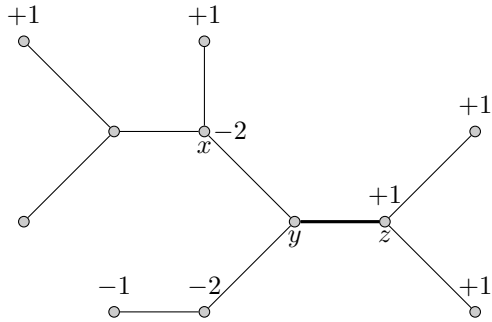
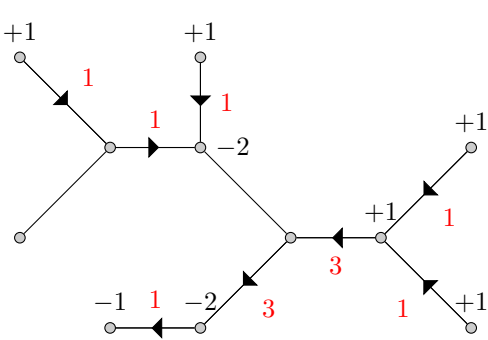
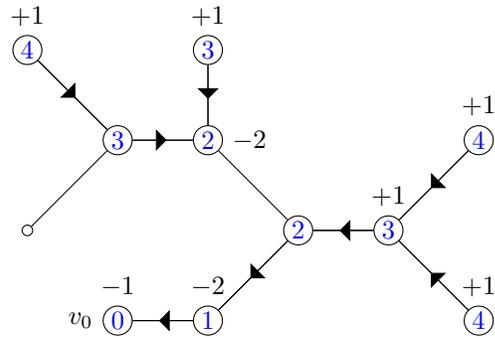
\begin{figure}[h!]
	\begin{subfigure}[t]{0.8\textwidth}
	\centering
	\tikzstyle{every node}=[circle, draw, fill=black!20, inner sep=0pt, minimum width=4pt]
	\begin{tikzpicture}[scale=1.2]
		\draw (0,2) node(v1) [label=above:$+1$] {}
		(0,0) node(v2) {}
		(1,1) node(v3) {}
		(1,-1) node(v4)  [label=above:$-1$] {}
		(2,2) node(v5) [label=above:$+1$] {}
		(2,1) node(v6) [label=right:$-2$] [label=below:$x$] {}
		(2,-1) node(v7) [label=above:$-2$] {}
		(3,0) node(v8) [label=below:$y$] {}
		(4,0) node(v9) [label=above:$+1$] [label=below:$z$] {}
		(5,1) node(v10) [label=above:$+1$] {}
		(5,-1) node(v11) [label=above:$+1$] {};
		
		\draw (v1)--(v3)--(v6)--(v8)--(v9)--(v10); \draw (v2)--(v3); \draw (v4)--(v7)--(v8);  \draw (v5)--(v6); \draw (v9)--(v11); 
		\draw[very thick] (v8)--(v9);
	\end{tikzpicture}
	\caption{A tree $\cG=(V,E)$ has the zero-sum assignment $\rho$ labeled on vertices.}
	\label{fig: tree_example_1}
	\end{subfigure}
	\begin{subfigure}[t]{0.45\textwidth}
	\centering
	\tikzset{vertex/.style={circle, draw, fill=black!20, inner sep=0pt, minimum width=4pt}}
	\begin{tikzpicture}[scale=1.2]
		\draw (0,2) node(v1) [vertex, label=above:$+1$] {}
		(0,0) node(v2) [vertex] {}
		(1,1) node(v3) [vertex] {}
		(1,-1) node(v4)  [vertex, label=above:$-1$] {}
		(2,2) node(v5) [vertex, label=above:$+1$] {}
		(2,1) node(v6) [vertex, label=right:$-2$] {}
		(2,-1) node(v7) [vertex, label=above:$-2$] {}
		(3,0) node(v8) [vertex] {}
		(4,0) node(v9) [vertex, label=above:$+1$] {}
		(5,1) node(v10) [vertex, label=above:$+1$] {}
		(5,-1) node(v11) [vertex, label=above:$+1$] {};
		
		\draw (v1)--(v3)--(v6)--(v8)--(v9)--(v10); \draw (v2)--(v3); \draw (v4)--(v7)--(v8);  \draw (v5)--(v6); \draw (v9)--(v11);
		\begin{scope}[nodes={sloped,allow upside down}]
		\draw (v1) -- node[label=above:\textcolor{red}{$1$}] {\midarrow} (v3);
		\draw (v3) -- node[label=above:\textcolor{red}{$1$}] {\midarrow} (v6);
		\draw (v5) -- node[label=above:\textcolor{red}{$1$}] {\midarrow} (v6);
		\draw (v7) -- node[label=below:\textcolor{red}{$1$}] {\midarrow} (v4);
		\draw (v8) -- node[label=above:\textcolor{red}{$3$}] {\midarrow} (v7);
		\draw (v9) -- node[label=above:\textcolor{red}{$3$}] {\midarrow} (v8);
		\draw (v10) -- node[label=above:\textcolor{red}{$1$}] {\midarrow} (v9);
		\draw (v11) -- node[label=above:\textcolor{red}{$1$}] {\midarrow} (v9);
		\end{scope}
	\end{tikzpicture}
	\caption{\textcolor{red}{Red} numbers and arrows represent the size and direction of the flow $\psi$ on each edge.}
	\label{fig: tree_example_2}
	\end{subfigure}
	~\hspace{0.05\textwidth}
	\begin{subfigure}[t]{0.45\textwidth}
	\centering
	\tikzset{vertex/.style={circle, draw, fill=white, inner sep=1pt, minimum width=4pt}}
	\begin{tikzpicture}[scale=1.2]
		
		\draw (0,2) node(v1) [vertex, label=above:$+1$] {\textcolor{blue}{$4$}}
		(0,0) node(v2) [vertex] {}
		(1,1) node(v3) [vertex] {\textcolor{blue}{$3$}}
		(1,-1) node(v4)  [vertex, label=above:$-1$, label=left:$v_0$] {\textcolor{blue}{$0$}}
		(2,2) node(v5) [vertex, label=above:$+1$] {\textcolor{blue}{$3$}}
		(2,1) node(v6) [vertex, label=right:$-2$] {\textcolor{blue}{$2$}}
		(2,-1) node(v7) [vertex, label=above:$-2$] {\textcolor{blue}{$1$}}
		(3,0) node(v8) [vertex] {\textcolor{blue}{$2$}}
		(4,0) node(v9) [vertex, label=above:$+1$] {\textcolor{blue}{$3$}}
		(5,1) node(v10) [vertex, label=above:$+1$] {\textcolor{blue}{$4$}}
		(5,-1) node(v11) [vertex, label=above:$+1$] {\textcolor{blue}{$4$}};
		
		\draw (v1)--(v3)--(v6)--(v8)--(v9)--(v10); \draw (v2)--(v3); \draw (v4)--(v7)--(v8);  \draw (v5)--(v6); \draw (v9)--(v11);
		\begin{scope}[nodes={sloped,allow upside down}]
			\draw (v1) -- node {\midarrow} (v3);
			\draw (v3) -- node {\midarrow} (v6);
			\draw (v5) -- node {\midarrow} (v6);
			\draw (v7) -- node {\midarrow} (v4);
			\draw (v8) -- node {\midarrow} (v7);
			\draw (v9) -- node {\midarrow} (v8);
			\draw (v10) -- node{\midarrow} (v9);
			\draw (v11) -- node {\midarrow} (v9);
		\end{scope}
	\end{tikzpicture}
	\caption{\textcolor{blue}{Blue} numbers represent the potential $\Phi$ on each vertex.}
	\label{fig: tree_example_3}
\end{subfigure}
\caption{The diagrams demonstrate two methods of calculating the Wasserstein distance on a tree. \subref{fig: tree_example_1} shows
an example of a zero-assignment $\rho$ on a tree, \subref{fig: tree_example_2} shows the calculation via the flow, and \subref{fig: tree_example_3} shows the calculation via the potential.}
\label{fig: tree_example_all}
\end{figure}
\end{example}

\bigskip

\section{Flows and Potential Functions between Two Radially Symmetric Measures on the Infinite Regular Tree}
\label{sec:radial}

In this section, we further restrict our graph to be $\cG=\bbT_{q+1}$, the infinite regular tree in which every vertex has degree exactly $q+1$, with $q\ge 2$. For a fixed pair of vertices $X,Y\in V$ with distance $\dist(X,Y)=d$ apart, we aim to calculate the Wasserstein distance $W_1(\mu_X,\mu_Y)$ between two measures $\mu_X$ and $\mu_Y$, where $\{\mu_u\}_{u\in V}$ is a family of \emph{radially symmetric} measures. For the precise meaning, we fix a sequence of nonnegative real numbers $s=(s(0), s(1),s(2),\ldots)$ with finitely many nonzero terms. For a non-degeneracy, we will also assume that $\sum_{i=0}^{\infty} s(i) >0$.
For each $u\in V$, define a measure $\mu_u$ by
\[
\mu_u(v):= s(\ell) \text{ for all } v \text{ with } \dist(u,v)=\ell. 
\]

In order to describe the unique flow $\psi$ and a good potential $\Phi$ with respect to the assignment $\rho:=\mu_X-\mu_Y$, we will need a good bookkeeping method to refer to the vertices of $\cG$. We describe the bookkeeping as follows.

First, we denote by $P_{XY}$ the unique path from $X$ to $Y$, and we label the vertices along this path by
\[X = Z_0, Z_1, \ldots, Z_d = Y.\]

We define a function $\bfi:V \to \{0,1,...,d\}$ such that for any vertex $v\in V$,
\[ \{Z_{\bfi(v)}\} := P_{vX} \cap P_{vY}\cap P_{XY}.\]
We remark that, for any vertices $a,b,c$ in a tree, the three shortest paths $P_{ab}$, $P_{bc}$, and $P_{ca}$ intersect at exactly one vertex, which is known as the unique \emph{median} of $\triangle abc$.

This vertex $Z_{\bfi(v)}$ is called the \emph{basepoint} of $v$ on the path $P_{XY}$. Furthermore, we define another function $\bfh: V\to \bbZ_{\ge 0}$ as $\bfh(v):= \dist (v,Z_{\bfi(v)})$ to be the \emph{height} of $v$ above its basepoint $Z_{\bfi(v)}$. The vertex set $V$ can then be partitioned into \[V=\bigsqcup_{i=0}^{d} \bigsqcup_{h=0}^{\infty} V_{i,h},\]
where $V_{i,h} :=\{ v\in V: \ \bfi(v)=i, \bfh(v) = h \}$.

The next two lemmas describe the direction of the flow $\psi$ (with $\dvg \psi = \mu_X-\mu_Y$) along each edge of $\cG$.

\begin{lemma} \label{lem: flow_above_ground}
Let $\psi$ be the flow with $\dvg \psi = \mu_X-\mu_Y$. Consider an edge $e=\{v,w\}$ not contained in the path $P_{XY}$, and suppose that $v$ is further away from $P_{XY}$ than $w$ is. Then
\begin{align*}
\psi(v,w) \begin{cases}
\ge 0 &\text{if } \bfi(v)< \frac{d}{2}, \\
=0 &\text{if } \bfi(v)= \frac{d}{2}, \\
\le 0 &\text{if } \bfi(v)> \frac{d}{2},
\end{cases}
\end{align*}
where $Z_{\bfi(v)}$ is the basepoint of $v$ (and also of $w$) on the path $P_{XY}$.
\end{lemma}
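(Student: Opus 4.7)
The plan is to apply Proposition \ref{prop: unique_flow_tree} directly: the flow value $\psi(v,w)$ equals the net charge $(\mu_X - \mu_Y)((\cG \setminus \{e\})_v)$, where $(\cG \setminus \{e\})_v$ is the subtree on the side of $e$ containing $v$. Since $v$ has $q$ neighbors inside this subtree and each successive vertex contributes $q$ new neighbors one step farther out, the number of vertices of $(\cG \setminus \{e\})_v$ at distance exactly $k$ from $v$ is $q^k$. So the calculation reduces to a weighted sum of the radial profile values $s(\cdot)$, indexed by their distances to $X$ and to $Y$.

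I would then compute those distances explicitly. Let $i := \bfi(v) = \bfi(w)$ (equal because $e$ lies off the path $P_{XY}$) and $h := \bfh(v) = \bfh(w) + 1$. For any vertex $u \in (\cG \setminus \{e\})_v$, the unique shortest path in the tree from $u$ to $X$ must leave the subtree through $e$ and then proceed along $w \to Z_i \to X$. Hence
\[
\dist(u, X) = \dist(u, v) + 1 + (h - 1) + i = \dist(u, v) + h + i,
\]
and analogously $\dist(u, Y) = \dist(u, v) + h + (d - i)$. Aggregating over $k := \dist(u, v) \ge 0$ yields
\[
\psi(v,w) = \sum_{k \ge 0} q^k \bigl[\, s(k + h + i) - s(k + h + d - i) \,\bigr].
\]

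Finally, to read off the sign, substitute $j = k + m$ in each piece and introduce the tail sum $F(m) := \sum_{j \ge m} q^j s(j)$, which is nonnegative and nonincreasing in $m$ since $s(j) \ge 0$ for all $j$. This rewrites the flow as
\[
\psi(v,w) = q^{-(h+i)} F(h+i) - q^{-(h+d-i)} F(h+d-i).
\]
If $i < d/2$, then $h + i < h + d - i$, so both the prefactor $q^{-(h+i)} > q^{-(h+d-i)}$ and the value $F(h+i) \ge F(h+d-i) \ge 0$ point the same way, giving $\psi(v,w) \ge 0$. If $i = d/2$, the two terms coincide, so $\psi(v,w) = 0$. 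The case $i > d/2$ is symmetric and gives $\psi(v,w) \le 0$. The one place to be careful is the geometric bookkeeping at the start — verifying that $\bfi(v) = \bfi(w)$ and that every shortest path from $(\cG \setminus \{e\})_v$ to $X$ or $Y$ passes through $e$ and then through $Z_i$ — but once those are in place, the rest is routine algebraic manipulation.
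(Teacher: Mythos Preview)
Your argument is correct and follows essentially the same route as the paper: both invoke the explicit flow formula from Proposition~\ref{prop: unique_flow_tree}, count vertices in the subtree $(\cG\setminus\{e\})_v$ by their distance (equivalently, height), and reduce $\psi(v,w)$ to the same sum $\sum q^{k}[s(k+h+i)-s(k+h+d-i)]$. The only difference is cosmetic: in the final sign check the paper splits the sum at the threshold $j=d-2i$ and exhibits each piece as nonnegative, whereas you package the computation via the nonincreasing tail function $F(m)=\sum_{j\ge m}q^{j}s(j)$ and compare $q^{-(h+i)}F(h+i)$ with $q^{-(h+d-i)}F(h+d-i)$; both are equally valid.
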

In words, on any edge which is not on the path $P_{XY}$, this flow $\psi$ must travel toward the path $P_{XY}$ if the edge is closer to $X$ than to $Y$, and it travels away from $P_{XY}$ if the edge is closer to $Y$ than to $X$.

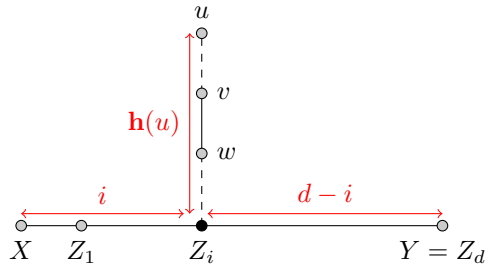
\begin{figure}[h!]
	\centering
	\tikzset{vertex/.style={circle, draw, fill=black!20, inner sep=0pt, minimum width=4pt}}
	\begin{tikzpicture}[scale=0.8]
		\draw (0,0) -- (1,0) -- (3,0) -- (4,0) -- (7,0);
		\draw[dashed] (3,0) -- (3,1.2);
		\draw (3,1.2) -- (3,2.2);
		\draw[dashed] (3,2.2) -- (3,3.2);
		\node at (0,0) [vertex, label={[label distance=0mm]270: ${X}$}] {};
		\node at (1,0) [vertex, label={[label distance=0mm]270: ${Z_1}$}] {};
		\node at (3,0) [vertex, label={[label distance=0mm]270: ${Z_{i}}$}, fill=black] {};
		\node at (7,0) [vertex, label={[label distance=0mm]270: ${Y=Z_d}$}] {};
		\node at (3,1.2) [vertex, label={[label distance=0mm]0: ${w}$}] {};
		\node at (3,2.2) [vertex, label={[label distance=0mm]0: ${v}$}] {};
		\node at (3,3.2) [vertex, label={[label distance=0mm]90: ${u}$}] {};
		\draw[red,<->] (2.8,0.2) -- (2.8,3.2) node [midway, left] {$\bfh(u)$};
		\draw[red,<->] (0,0.2) -- (2.7,0.2) node [midway, above] {$i$};
		\draw[red,<->] (3.1,0.2) -- (7,0.2) node [midway, above] {$d-i$};
	\end{tikzpicture}
	\caption{The diagram illustrates the distances from a vertex $u$ to its basepoint, to $X$, and to $Y$.}
	\label{fig: i+h}
\end{figure}

\begin{proof}
By the equation \eqref{eq: flow_tree_defn}, we have
\begin{align*}
\psi(v,w)= \sum_{u\in (\cG\backslash\{e\})_v} \mu_X(u)-\mu_Y(u),
\end{align*}
where the values of $\mu_X(u)$ and $\mu_Y(u)$ depend on $\dist(X,u)$ and $\dist(Y,u)$, respectively.

Let $\bfi(v)=i$. Observe that all $u\in (\cG\backslash\{e\})_v$ share the same basepoint $Z_i$, and the distances from this basepoint to $X$ and to $Y$ are equal to $i$ and $d-i$, respectively. This gives 
\begin{align*}
\dist(X,u) &= i+\bfh(u),\\
\dist(Y,u) &= d-i+\bfh(u),
\end{align*}
where we note that $\bfh(u)\ge\bfh(v)$ for all $u$; see Figure \ref{fig: i+h} for illustration.

Moreover, given a positive integer $h \ge \bfh(v)$, the number of vertices $u\in (\cG\backslash\{e\})_v$ such that $\bfh(u)=h$ is equal to $q^{h-\bfh(v)}$.	
Therefore, we can compute
\begin{align*}
\psi(v,w)
&=\sum_{u\in (G\backslash\{e\})_v} \mu_X(u)-\mu_Y(u)\\
&=\sum_{u\in (G\backslash\{e\})_v} s(i+\bfh(u))- s(d-i+\bfh(u))\\
&=\sum_{h = \bfh(v)}^\infty q^{h-\bfh(v)} \big(s(i+h) -s(d-i+h)\big).
\end{align*}
If $i=\frac{d}{2}$, then obviously $\psi(v,w)=0$.
Suppose $i<\frac{d}{2}$. Letting $j:=d-2i>0$, we have
\begin{align*}
\psi(v,w)=\sum_{h = \bfh(v)}^{\bfh(v)+j-1}  s(i+h)q^{h-\bfh(v)} +
\sum_{h = \bfh(v)+j}^\infty s(i+h)\big(q^{h-\bfh(v)} - q^{h-\bfh(v)-j} \big) \ge 0.
\end{align*}
Similarly, if $i>\frac{d}{2}$, then $\psi(v,w)<0$ as desired.
\end{proof}

The previous lemma describes the direction of the flow along all edges which are not on the path $P_{XY}$. The following lemma asserts furthermore that the flow along the path $P_{XY}$ simply travels from $X$ to $Y$.

\begin{lemma} \label{lem: half-half}
Let $\psi$ be the flow with $\dvg \psi = \mu_X-\mu_Y$. Then $\psi$ must travel from $X$ to $Y$, that is,
for any edge $e=\{Z_{i},Z_{i+1}\}$ ($0\le i \le d-1$) on the path $P_{XY}$, we have $\psi(Z_{i},Z_{i+1})>0$.
\end{lemma}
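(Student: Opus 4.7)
The plan is to apply Proposition \ref{prop: unique_flow_tree} to realize $\psi(Z_i, Z_{i+1})$ as the net $\mu_X$-mass minus $\mu_Y$-mass on the $X$-side of the cut edge, and then verify strict positivity by a radial counting argument. Letting $A_i := (\cG \setminus \{e\})_{Z_i}$ denote the component containing $X$ after removing the edge $e = \{Z_i, Z_{i+1}\}$, Proposition \ref{prop: unique_flow_tree} gives
\[
\psi(Z_i, Z_{i+1}) \;=\; \mu_X(A_i) - \mu_Y(A_i) \;=\; \sum_{k \ge 0} s(k)\,\bigl(c^X_i(k) - c^Y_i(k)\bigr),
\]
where $c^X_i(k) := |\{v \in A_i : \dist(X, v) = k\}|$ and $c^Y_i(k) := |\{v \in A_i : \dist(Y, v) = k\}|$. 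Since $s$ is nonnegative and $\sum_k s(k) > 0$, it suffices to prove that $c^X_i(k) > c^Y_i(k)$ for every $k \ge 0$.

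For $c^X_i(k)$, I would use the partition $V = \bigsqcup_{j,h} V_{j,h}$ together with the counts $|V_{j,0}| = 1$, $|V_{0,h}| = q^h$ for $h \ge 1$, and $|V_{j,h}| = (q-1)q^{h-1}$ for $1 \le j \le d-1$ and $h \ge 1$; summing $|V_{j, k-j}|$ over basepoints $\bfi(v) = j \in \{0, 1, \ldots, \min(i,k)\}$ yields
\[
c^X_i(0) = 1, \quad c^X_i(k) = (q+1)q^{k-1} \text{ for } 1 \le k \le i, \quad c^X_i(k) = q^{k-1}\bigl(q + 1 - q^{-i}\bigr) \text{ for } k \ge i+1.
\]
For $c^Y_i(k)$, every $v \in A_i$ satisfies $\dist(Y, v) = (d - i) + \dist(Z_i, v)$, because the path from $Y$ to $v$ must traverse $Z_i$. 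Rooting $A_i$ at $Z_i$ yields a tree in which $Z_i$ has $q$ children and every other vertex has $q$ children, so the sphere of radius $m \ge 1$ about $Z_i$ inside $A_i$ contains exactly $q^m$ vertices, giving $c^Y_i(k) = 0$ for $k < d - i$ and $c^Y_i(k) = q^{k - d + i}$ for $k \ge d - i$.

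Finally, when $c^Y_i(k) = 0$ the desired strict inequality is immediate (since $c^X_i(k) > 0$ on the infinite tree), while in the remaining cases comparing the closed forms gives
\[
c^X_i(k) - c^Y_i(k) \;\ge\; q^{k-1}\bigl((q + 1) - q^{-i} - q^{i - d + 1}\bigr),
\]
and the right-hand side is strictly positive because $q^{-i} + q^{i - d + 1} \le 1 + 1 = 2 < q + 1$, using the hypotheses $q \ge 2$ and $0 \le i \le d - 1$. This establishes $c^X_i(k) > c^Y_i(k)$ for every $k \ge 0$, and the non-degeneracy $\sum_k s(k) > 0$ then forces $\psi(Z_i, Z_{i+1}) > 0$. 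The main obstacle is purely bookkeeping: obtaining clean closed forms for $c^X_i$ and $c^Y_i$ requires careful enumeration of the sets $V_{j,h}$ and a small split depending on whether the sphere of radius $k$ around $X$ is entirely contained in $A_i$, but the concluding algebraic inequality is elementary.
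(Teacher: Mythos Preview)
Your proof is correct, and it takes a genuinely different route from the paper's argument.

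The paper proves this lemma indirectly via duality: it shows by radial symmetry that the ``half-spaces'' $\cX$ (vertices closer to $X$ than to $Z_1$) and $\cY$ (vertices closer to $Y$ than to $Z_{d-1}$) each carry strictly more than half the total mass of $\mu_X$ and $\mu_Y$ respectively. An inclusion--exclusion argument then forces any transport plan $\pi$ to satisfy $\pi(\cX\times\cY)>0$, and complementary slackness (Remark~\ref{rem: cs_flow}) converts this into the statement that the unique flow $\psi$ travels along all of $P_{XY}$ from $X$ toward $Y$.

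Your approach is instead a direct computation of $\psi(Z_i,Z_{i+1})=\mu_X(A_i)-\mu_Y(A_i)$ via Proposition~\ref{prop: unique_flow_tree}, and then a term-by-term verification that $c^X_i(k)>c^Y_i(k)$ for every $k\ge 0$. The counting is correct: the closed forms for $c^X_i(k)$ and $c^Y_i(k)$ follow exactly from the sizes of the cells $V_{j,h}$, and the final inequality $q^{-i}+q^{i-d+1}\le 2<q+1$ holds precisely under the standing hypotheses $q\ge 2$ and $0\le i\le d-1$. Your argument is more elementary---it avoids the transport-plan/complementary-slackness machinery entirely---and is stylistically consistent with the paper's own proof of Lemma~\ref{lem: flow_above_ground}, which uses the same direct-flow computation for the off-path edges. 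It also yields the slightly sharper intermediate fact that $c^X_i(k)>c^Y_i(k)$ for \emph{every} $k$, not merely positivity of the weighted sum. The paper's argument, in exchange, is more conceptual: it handles all $d$ edges of $P_{XY}$ simultaneously with a single half-space observation, whereas your computation treats each edge $\{Z_i,Z_{i+1}\}$ separately.
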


\begin{figure}[h!]
	\centering
	\tikzset{vertex/.style={circle, draw, fill=black!20, inner sep=0pt, minimum width=4pt}}
	\begin{tikzpicture}[scale=1.2]
		
		\draw[red!10, fill=red!5] (-3,-2) rectangle (0.5,2);
		\draw[red!20, fill=red!10] (-1.7,1) ellipse (1 and .5);
		
		\draw[blue!10, fill=blue!5] (3.5,-2) rectangle (6.5,2);
		
		\draw (0,0) node{} -- (1,0) node{} -- (3,0) node{} -- (4,0) node{};
		\draw[very thick] (0,0)--(4,0);
		
		\foreach \p in {2,3,4}
		{
			\draw (0,0) node{} -- (-1, 3-\p) node{};
			\foreach \q in {1,0,-1}
			{\draw (-1, 3-\p) node{} -- (-1.5, 3-\p+0.3*\q); }
			\node at (-1,3-\p) [vertex, label={[label distance=0mm]270: ${Z^{\p}}$}] {};
		}
		\foreach \p in {2,3,4}
		{
			\draw (4,0) node{} -- (5, 3-\p) node{};
			\foreach \q in {1,0,-1}
			{\draw (5, 3-\p) node{} -- (5.5, 3-\p+0.3*\q); }
			\node at (5,3-\p) [vertex] {};
		}

		\node at (0,0) [vertex, label={[label distance=0mm]270: ${X}$}] {};
		\node at (1,0) [vertex, label={[label distance=0mm]270: ${Z_1}$}] {};
		\node at (2,0) [label={[label distance=0mm]270: ${\ldots}$}] {};
		\node at (2,0) [label={[label distance=0mm]90: $P_{XY}$}] {};
		\node at (3,0) [vertex, label={[label distance=0mm]270: ${Z_{d-1}}$}] {};
		\node at (4,0) [vertex, label={[label distance=0mm]270: ${Y}$}] {};
		
		\node at (0.5,2) [label={[label distance=0mm]225: $\textcolor{red}{\cX}$}] {};
		\node at (6.5,2) [label={[label distance=0mm]225: $\textcolor{blue}{\cY}$}] {};
		
		\node at (-2.5,1) [label={[label distance=0mm]0: {\large $ U_{Z^2}$}}] {};
		
	\end{tikzpicture}
	\caption{The diagram illustrates the set $\cX$ (and respectively, $\cY$) consisting of all vertices which are closer to $X$ (and respectively, to $Y$) than to all other $Z_{i}$'s.}
	\label{fig:half-half}
\end{figure}
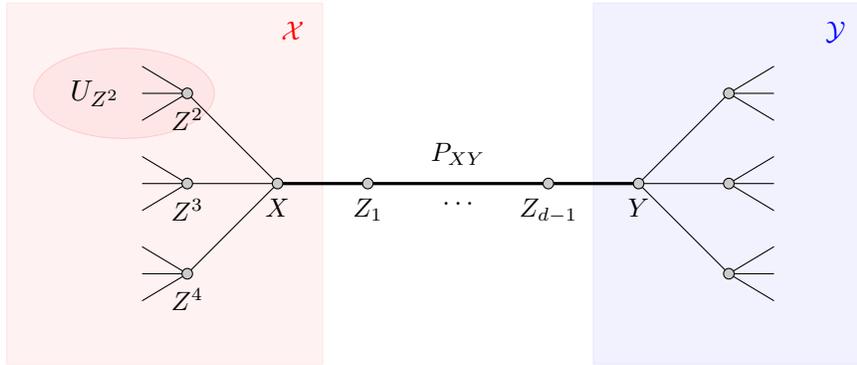

\begin{proof}
Let us denote the set of neighbors of $X$ by $N(X):=\{Z^1,Z^2,...,Z^{q+1}\}$, where $Z^1=Z_1$. We then partition the vertex set by $V = \{X\} \sqcup \bigsqcup\limits_{Z^i \in N(X)} U_{Z^i}$, where each set $U_{Z^i}$ consists of all vertices $v\not=X$ such that the path $P_{vX}$ contains $Z^i$. Moreover, we consider the complement set $\cX:= (U_{Z^1})^{\rm c}$, which consists of all vertices which are closer to $X$ than to $Z^1=Z_1$. By the radial symmetry around the vertex $X$, the measure $\mu_X$ is equally distributed among the $q+1$ sets $U_{Z^i}$, that is, we have $\mu_X(U_{Z^i})=\frac{1}{q+1}(\mu_X(V)-\mu_X(X))$ for all $1\le i \le q+1$. It follows that \[ \mu_X(\cX) = \mu_X(X) + \sum_{i=2}^{q+1}\mu_X(U_{Z^i}) > \frac{1}{2}\mu_X(V)=\frac{1}{2}\pi(V\times V).\]

Similarly, we denote by $\cY$ the set containing all vertices which are closer to $Y$ than to $Z_{d-1}$, and conclude that $\mu_Y(\cY)> \frac{1}{2}\pi(V\times V)$. See Figure \ref{fig:half-half} for the illustration of the sets $\cX$ and $\cY$. For any transport plan $\pi\in \Pi(\mu_X,\mu_Y)$, we know from the Principle of Inclusion-Exclusion that
\begin{align*}
\pi(V\times V) &\ge \pi(\cX\times V) + \pi(V\times \cY) - \pi(\cX\times \cY) \\
&=  \mu_X(\cX) + \mu_Y(\cY) - \pi(\cX\times \cY) \\
&> \frac{1}{2}\pi(V\times V) + \frac{1}{2}\pi(V\times V) - \pi(\cX\times \cY),
\end{align*}
which guarantees $\pi(\cX\times \cY)>0$, that is, there exist a pair of vertices $v_0\in \cX$ and $w_0\in \cY$ such that $\pi(v_0,w_0)>0$. In view of Remark \ref{rem: cs_flow}, if $\pi$ is particularly chosen to be an optimal transport plan, then the flow $\psi$ must travel from $v_0$ to $w_0$. On the other hand, since $v_0\in \cX$ and $w_0\in \cY$, we know that if we travel from $v_0$ to $w_0$ along the unique path $P_{v_0w_0}$, we will visit $X$ before visit $Y$, which means $\psi$ also travels from $X$ to $Y$.
\end{proof}

Now since we know the direction of the flow $\psi$ with $\dvg \psi = \mu_X-\mu_Y$ along each edge of $\cG$, we can construct a good potential function $\Phi: V\to \bbR$ with respect to $\psi$ as follows.

\begin{corollary} \label{cor: construct_good_potential}
Let $\psi$ be the flow with $\dvg \psi = \mu_X-\mu_Y$. Then a function $\Phi: V\to \bbR$ defined as
\begin{align*}
\Phi(v) := \begin{cases}
	\frac{d}{2} - \bfi(v) + \bfh(v) & \text{if } \bfi(v) \le \frac{d}{2}, \\
	\frac{d}{2} - \bfi(v) - \bfh(v) & \text{if } \bfi(v) > \frac{d}{2},
\end{cases}
\end{align*}
is a good potential function with respect to $\psi$.
\end{corollary}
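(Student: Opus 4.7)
My plan is to verify the defining conditions of a good potential function directly, using the case analysis provided by Lemmas \ref{lem: flow_above_ground} and \ref{lem: half-half}. Every edge of $\mathbb{T}_{q+1}$ falls into exactly one of two categories: edges lying on the path $P_{XY}$, and edges ``branching off'' from $P_{XY}$. Since the definition of $\Phi$ is given piecewise according to whether $\bfi(v) \le d/2$ or $\bfi(v) > d/2$, the verification reduces to straightforward bookkeeping of $\bfi$ and $\bfh$ on the two endpoints of each edge.

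For an edge $\{Z_i, Z_{i+1}\}$ on the path $P_{XY}$, both endpoints satisfy $\bfh = 0$, $\bfi(Z_i) = i$, and $\bfi(Z_{i+1}) = i+1$. A short calculation shows $\Phi(Z_i) - \Phi(Z_{i+1}) = 1$ in every subcase (with the only subtlety being when $d$ is even and $i = d/2$, where one should verify that the two pieces of $\Phi$ agree on $Z_{d/2}$, which they do since $\bfh(Z_{d/2})=0$). Lemma \ref{lem: half-half} gives $\psi(Z_i, Z_{i+1}) > 0$, matching the required condition.

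For an edge $e = \{v,w\}$ not on $P_{XY}$, with $v$ the endpoint farther from $P_{XY}$, the two endpoints share the same basepoint $Z_i$ and satisfy $\bfh(v) = \bfh(w) + 1$. Writing $i = \bfi(v) = \bfi(w)$, I will split into three subcases mirroring Lemma \ref{lem: flow_above_ground}: if $i < d/2$, then both endpoints fall in the first branch of the definition of $\Phi$, yielding $\Phi(v) - \Phi(w) = +1$, consistent with $\psi(v,w) \ge 0$; if $i > d/2$, both fall in the second branch, yielding $\Phi(v) - \Phi(w) = -1$, consistent with $\psi(v,w) \le 0$; and if $i = d/2$ (necessarily $d$ even), the first branch applies and gives $\Phi(v) - \Phi(w) = +1 \le 1$, which meets the requirement since $\psi(v,w) = 0$. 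In the mixed-sign boundary subcases where Lemma \ref{lem: flow_above_ground} gives only a weak inequality, the ``$\le 1$'' clause of Definition \ref{def: good_potential} is exactly what $\Phi$ satisfies.

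I do not anticipate a genuine obstacle; the work is entirely case analysis and careful reading of $\bfi$ and $\bfh$. The only place requiring attention is ensuring that the piecewise definition is consistent at the boundary $\bfi = d/2$ (when $d$ is even) and that the orientation of the edge $\{v,w\}$ off the path is chosen so that the sign conventions in Lemma \ref{lem: flow_above_ground} and in the definition of $\Phi$ line up. Once these bookkeeping issues are handled, all three cases of Definition \ref{def: good_potential} fall out immediately.
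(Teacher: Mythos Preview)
Your proposal is correct and follows essentially the same approach as the paper's own proof, which consists of the single sentence ``One can easily verify that this function $\Phi$ satisfies \eqref{eq: good_potential} where the direction of the flow $\psi$ is described as in Lemmas \ref{lem: flow_above_ground} and \ref{lem: half-half}.'' You have simply written out in detail the verification that the paper leaves to the reader, splitting into on-path and off-path edges and invoking the two lemmas exactly as intended.
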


\begin{proof}
One can easily verify that this function $\Phi$ satisfies \eqref{eq: good_potential} where the direction of the flow $\psi$ is described as in Lemmas \ref{lem: flow_above_ground} and \ref{lem: half-half}.
\end{proof}

\bigskip

\section{Distance between Radially Symmetric Distributions}
\label{sec:S1S2S3}
Let $q \ge 2$ and $d \ge 1$ be positive integers. We consider the infinite regular tree $\cG = \mathbb{T}_{q+1}$, in which every vertex has degree exactly $q+1$. In this section, we continue the discussion of radially symmetric measures from Section \ref{sec:radial}. Let us now consider a family $\{\mu_u^n\}_{u \in V(\cG), \, n \in \mathbb{Z}_{\ge 0}}$ of probability distributions (indexed by $u$ and $n$) on $\cG$ with the following property: {\em for every non-negative integer $n$, and for any vertices $u, v, u', v' \in \cG$ such that $\dist(u,v) = \dist(u',v')$, we have $\mu_u^n(v) = \mu_{u'}^n(v')$}. In this and the following sections, we introduce a new function $g: \mathbb{Z}_{\ge 0} \times \mathbb{Z}_{\ge 0} \to \mathbb{R}_{\ge 0}$ given by $g(\ell, n) := \mu_u^n(v)$, for any pair $u, v$ of vertices which are distance $\ell$ apart.

Like in Section \ref{sec:radial}, we let $X$ and $Y$ be two vertices in $\cG$ which are distance $d$ apart. The main goal of this section is to derive Theorem \ref{thm:W1=S1+S2+S3}. The theorem expresses the transportation distance $W_1(\mu_X^n, \mu_Y^n)$ in terms of a summation involving $g(\ell, n)$. As we described in Section \ref{sec:gentree}, there are two ways to compute the distance. We can do this either via the flow, or via the potential. Our approach in this section is via the potential. For interested readers, we also show the calculation via the flow in Appendix \ref{sec:appendix}, which indeed gives the same answer. Our opinion is that for the problem we are dealing with, the potential method is simpler.

To compute the transportation distance between $\mu_X^n$ and $\mu_Y^n$ via the potential, we need to construct a good potential with respect the flow $\psi$ whose divergence is $\mu_X^n - \mu_Y^n$. (We refer the readers back to Definition \ref{def: good_potential} for the precise definition of a ``good potential'' with respect to $\psi$.) Our strategy for this section is to give explicitly a good potential function $\Phi: V(\cG) \to \mathbb{R}$ with respect to $\psi$. From Lemmas \ref{lem: flow_above_ground} and \ref{lem: half-half}, it will be straightforward to check that $\Phi$ to be constructed is indeed good.

Recall that in Section \ref{sec:radial}, we have partitioned the vertex set $V(\cG)$ into
\[
V(\cG) = \bigsqcup_{i=0}^d \bigsqcup_{h=0}^{\infty} V_{i,h}.
\]
We assign the potential $\Phi: V(\cG) \to \mathbb{R}$ as follows. For any vertex $u \in V_{i,h}$, define
\[
\Phi(u) := \begin{cases}
\frac{d}{2} - i + h & \text{if } i \le \frac{d}{2}, \\
\frac{d}{2} - i - h & \text{if } i > \frac{d}{2}.
\end{cases}
\]
Since the potential $\Phi$ is good with respect to $\psi$ (due to Corollary \ref{cor: construct_good_potential}), Lemma \ref{lem: J=flow} then implies
\begin{equation}\label{eq:W1potential}
W_1(\mu_X^n, \mu_Y^n) = \sum_{u \in V(\cG)} \Phi(u) \left( \mu_X^n(u) - \mu_Y^n(u) \right),
\end{equation}
which we can calculate explicitly.

We break the sum on the right hand side of Equation (\ref{eq:W1potential}) into three sums: $S_1 + S_2 + S_3$, where the sum $S_1$ is the sum over the set
\[
\left( \bigcup_{h=0}^{\infty} V_{0,h} \right) \cup \left( \bigcup_{h=0}^{\infty} V_{d,h} \right),
\]
the sum $S_2$ is over the set
\[
\bigcup_{i=1}^{d-1} V_{i,0},
\]
and the sum $S_3$ is over
\[
\bigcup_{i=1}^{d-1} \bigcup_{h=1}^{\infty} V_{i,h}.
\]
Computing each sum directly, we find the following formula.

\begin{theorem}\label{thm:W1=S1+S2+S3}
We have
\[
W_1(\mu^n_X, \mu^n_Y) = S_1 + S_2 + S_3,
\]
where
\[
S_1 = 2 \cdot \sum_{h=0}^{\infty} q^h \left( g(h,n) - g(h+d, n) \right) \left( \frac{d}{2} + h \right),
\]
\[
S_2 = 2 \cdot \sum_{i=1}^{\left\lfloor d/2 \right\rfloor} \left( g(i,n) - g(d-i,n) \right) \left( \frac{d}{2} - i \right),
\]
and
\[
S_3 = 2 \cdot \sum_{i=1}^{\left \lfloor d/2 \right\rfloor} \sum_{h=1}^{\infty} (q-1) q^{h-1} \left( g(i+h,n) - g(d-i+h,n) \right) \left( \frac{d}{2} - i + h \right).
\]
\end{theorem}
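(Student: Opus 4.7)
The plan is to start from Equation~(\ref{eq:W1potential}), which already gives $W_1(\mu_X^n,\mu_Y^n)=\sum_{u\in V(\cG)}\Phi(u)(\mu_X^n(u)-\mu_Y^n(u))$, using the good potential $\Phi$ constructed just before the theorem. Since the partition $V(\cG)=\bigsqcup_{i=0}^d\bigsqcup_{h=0}^\infty V_{i,h}$ was set up in Section~\ref{sec:radial}, the three pieces $S_1$, $S_2$, $S_3$ are defined precisely by grouping: $S_1$ is the contribution from $i\in\{0,d\}$, $S_2$ from $1\le i\le d-1$ with $h=0$, and $S_3$ from $1\le i\le d-1$ with $h\ge 1$. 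So the task reduces to evaluating each group and recognizing the claimed expressions.

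For each class I would record three ingredients. The mass difference for $u\in V_{i,h}$ is $\mu_X^n(u)-\mu_Y^n(u)=g(i+h,n)-g(d-i+h,n)$, because the geodesics from $u$ to $X$ and from $u$ to $Y$ both pass through the basepoint $Z_i$, giving $\dist(X,u)=i+h$ and $\dist(Y,u)=d-i+h$. The potential $\Phi(u)$ is read directly from its piecewise formula: $\tfrac{d}{2}-i+h$ if $i\le \tfrac{d}{2}$ and $\tfrac{d}{2}-i-h$ if $i>\tfrac{d}{2}$. The cardinalities are $|V_{0,h}|=|V_{d,h}|=q^h$ for all $h\ge 0$ (singleton at $h=0$; the $q$ off-path neighbors of the endpoint each root a $q$-ary subtree for $h\ge 1$), $|V_{i,0}|=1$ for $1\le i\le d-1$ (just $\{Z_i\}$), and $|V_{i,h}|=(q-1)q^{h-1}$ for $1\le i\le d-1$, $h\ge 1$ (the $q-1$ off-path neighbors of $Z_i$).

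Assembling, the contribution of $V_{0,h}$ is $q^h\bigl(\tfrac{d}{2}+h\bigr)\bigl(g(h,n)-g(h+d,n)\bigr)$. For $V_{d,h}$ the potential is $-(\tfrac{d}{2}+h)$ and the mass difference is $g(h+d,n)-g(h,n)$, so the two minus signs combine and $V_{d,h}$ contributes the same as $V_{0,h}$; adding them yields the factor of $2$ in $S_1$. The $S_2$ piece is immediate: each $Z_i$ contributes $(\tfrac{d}{2}-i)(g(i,n)-g(d-i,n))$, and pairing index $i$ with $d-i$ shows these two summands are equal, giving the factor of $2$ and the restriction $1\le i\le\lfloor d/2\rfloor$ (the middle term $i=d/2$ when $d$ is even vanishes anyway because $g(i,n)=g(d-i,n)$). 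The $S_3$ piece works the same way: for $i\le d/2$ the summand is $(q-1)q^{h-1}(\tfrac{d}{2}-i+h)(g(i+h,n)-g(d-i+h,n))$, and substituting $d-i$ in place of $i$ produces the same expression once one applies the definition of $\Phi$ in the $i>d/2$ regime.

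The main pitfall I anticipate is the sign bookkeeping in the $(i,d-i)$ pairing: one must verify that the $-h$ (rather than $+h$) in the formula for $\Phi$ when $i>d/2$ combines with the flipped mass difference $g(d-i+h,n)-g(i+h,n)$ to reproduce, rather than cancel, the contribution from index $i$. Once that sign check is carried out, everything else is a routine rearrangement, and term-by-term comparison with the stated $S_1$, $S_2$, $S_3$ finishes the proof.
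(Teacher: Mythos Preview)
Your proposal is correct and follows essentially the same approach as the paper: start from Equation~(\ref{eq:W1potential}), partition the vertex set into the three indicated regions, and evaluate each piece using the explicit potential $\Phi$, the distances $\dist(X,u)=i+h$, $\dist(Y,u)=d-i+h$, and the cardinalities $|V_{i,h}|$. The paper's own proof is in fact just the one-line ``Computing each sum directly, we find the following formula,'' so your write-up supplies considerably more detail (the $(i,d-i)$ pairing and the sign check) than the paper does, but the method is identical.
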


Examples of families of radially symmetric measures for which Theorem \ref{thm:W1=S1+S2+S3} is applicable are (i) measures from simple random walks with laziness, (ii) uniform measures on expanding spheres, and (iii) uniform measures on expanding balls. We will give precise definitions and study these examples in great detail in Section \ref{sec:lin-asymp}.

\bigskip

\section{Distance in terms of Generating Functions}
\label{sec:Wgenfn}
In the previous section, we have written the transportation distance $W_1(\mu_X^n, \mu_Y^n)$ in terms of $g(\ell, n)$. In this section, we will write the distance in terms of the $y^n$ coefficient of a certain generating function. Equivalently, we are studying the generating function
\[
\sum_{n = 0}^{\infty} W_1\!\left( \mu_X^n, \mu_Y^n \right) \cdot y^n.
\]
Our goal for this section is to write the distance $W_1\!\left( \mu_X^n, \mu_Y^n \right)$ in terms of the generating function
\[
G(x,y) := \sum_{\ell, n \ge 0} g(\ell, n) x^{\ell} y^n \in \mathbb{R}[\![x,y]\!],
\]
and other related generating functions. This is Theorem \ref{thm:W-in-gen} below. A special case of Theorem \ref{thm:W-in-gen} is an asymptotic formula in Corollary \ref{cor:gamma-small-good-asymp}. The corollary is a key ingredient in deriving explicit formulas in Section \ref{sec:lin-asymp}.

From $G(x,y)$, we construct for each $i = 0, 1, \ldots, d-1$,
\[
\gamma_i(y) := \frac{1}{i!} \frac{\partial^i G(x,y)}{\partial x^i} \Bigg|_{x = 0} = \sum_{n \ge 0} g(i,n) y^n \in \mathbb{R}[\![y]\!].
\]
Let us also define
\[
G_1\!(x,y) := \frac{\partial G(x,y)}{\partial x} \in \mathbb{R}[\![x,y]\!].
\]
We will write $W_1\!\left( \mu_X^n, \mu_Y^n \right)$ in terms of $G$, $G_1$, and $\gamma_i$.

We will use the usual notation for extracting the $y^n$ coefficient of a generating function: if $\mathcal{F}(y) \in \mathbb{R}[\![y]\!]$, then we use $[y^n] \mathcal{F}(y)$ to denote the coefficient in front of $y^n$ of $\mathcal{F}(y)$.

Recall from Theorem \ref{thm:W1=S1+S2+S3} that we have $W_1\!\left( \mu_X^n, \mu_Y^n \right) = S_1 + S_2 + S_3$. The theorem displays the distance $W_1$ in terms of $g(\ell, n)$, while currently we would like the distance $W_1$ in terms of generating functions $G, G_1, \gamma_i$. In the following, we derive the generating-function formulas for $S_1$, $S_2$, $S_3$ separately. From the $g(\ell, n)$-formula for $S_1$ in Theorem \ref{thm:W1=S1+S2+S3}, we obtain the following formula:
\[
S_1 = 2q(1-q^{-d}) \cdot [y^n] G_1\!(q,y) 
+ d(1+q^{-d}) \cdot [y^n] G(q,y) 
- \sum_{i=0}^{d-1} (d-2i) q^{i-d} \cdot [y^n] \gamma_i(y).
\]
Similarly, for $S_2$, we have
\[
S_2 = \sum_{i=1}^{d-1} (d-2i) \cdot [y^n] \gamma_i(y).
\]
The formula for $S_3$ turns out to be more involved, so we will break $S_3$ into three smaller pieces. We write
\[
S_3 = S^{\heartsuit}_3 + S^{\diamondsuit}_3 - S^{\spadesuit}_3,
\]
where
\[
S^{\heartsuit}_3 = \sum_{i=1}^{\left\lfloor d/2 \right\rfloor} \sum_{h=1}^{\infty} (q-1) q^{h-1}(g(i+h,n) - g(d-i+h,n))\cdot(d-2i),
\]
\[
S^{\diamondsuit}_3 = \sum_{i=1}^{\left\lfloor d/2 \right\rfloor} \sum_{h=1}^{\infty} (q-1)q^{h-1}g(i+h,n) \cdot 2h,
\]
and
\[
S^{\spadesuit}_3 = \sum_{i=1}^{\left\lfloor d/2 \right\rfloor} \sum_{h=1}^{\infty} (q-1)q^{h-1}g(d-i+h,n) \cdot 2h.
\]

From the formula of $S^{\heartsuit}_3$, we find
\begin{align*}
S^{\heartsuit}_3 &= \left\{ \left( \frac{d}{q} - \frac{2}{q-1} \right) + \left( \frac{2}{q-1} + d \right) q^{-d} \right\} \cdot [y^n] G(q,y) \\
&\hphantom{=} + \frac{d(q-1)}{q} \cdot [y^n] \gamma_0(y) + \sum_{i=0}^{d-1} \left( \frac{2}{q-1} - d + 2i - \left( \frac{2}{q-1} + d \right) q^{i-d} \right) \cdot [y^n] \gamma_i(y).
\end{align*}

For convenience, let $\delta := \left\lfloor d/2 \right\rfloor$ and $\delta' := \left\lceil d/2 \right\rceil$. Note that $\delta' \ge \delta \ge 0$ are non-negative integers such that $\delta + \delta' = d$.

For $S^{\diamondsuit}_3$, we find
\begin{align*}
S^{\diamondsuit}_3 &= 2\left(1-q^{-\delta}\right) \cdot [y^n] G_1\!(q,y) \\
&\hphantom{=} + \left( - \frac{2}{q-1} + \frac{2}{q} \delta q^{-\delta} + \frac{2}{q-1} q^{-\delta} \right) \cdot [y^n] G(q,y) \\
&\hphantom{=} + \sum_{i=0}^{\delta} \left( \left( \frac{2i}{q} - \frac{2\delta}{q} - \frac{2}{q-1} \right) q^{i-\delta} + \frac{2}{q-1} \right) \cdot [y^n] \gamma_i(y).
\end{align*}

Similarly, for $S^{\spadesuit}_3$, we find
\begin{align*}
S_3^{\spadesuit} &= 2 \left( q^{1-\delta'} - q^{1-d} \right) \cdot [y^n] G_1\!(q,y) \\
&\hphantom{=} + \left\{ - \left( 2 \delta' + \frac{2}{q-1} \right) q^{-\delta'} + \left( 2d + \frac{2}{q-1} \right) q^{-d} \right\} \cdot [y^n] G(q,y) \\
&\hphantom{=} + \sum_{\substack{i \ge \delta + 1 \\ i \le d-1}} \left( \left( 2i - 2d - \frac{2}{q-1} \right) q^{i-d} + \frac{2}{q-1} \right) \cdot [y^n] \gamma_i(y) \\
&\hphantom{=} + \sum_{i = 0}^{\delta} \left( -2(q^{-\delta'} - q^{-d})iq^i + \left( 2 \delta' + \frac{2}{q-1} \right)q^{i-\delta'} - \left(2d+\frac{2}{q-1}\right) q^{i-d} \right) \cdot [y^n] \gamma_i(y).
\end{align*}

Combining the formulas above and using $W_1\!\left( \mu_X^n, \mu_Y^n \right) = S_1 + S_2 + S^{\heartsuit}_3 + S^{\diamondsuit}_3 - S^{\spadesuit}_3$, we obtain the main theorem of this section.

\begin{theorem}\label{thm:W-in-gen}
The formula for the transportation distance $W = W_1\!\left( \mu_X^n, \mu_Y^n \right)$ is
\begin{align*}
W &= \left\{ 2q + 2 - 2q^{1-\delta'} - 2q^{-\delta} \right\} \cdot [y^n] G_1\!(q,y) \\
&\hphantom{=} + \left\{ d + \frac{d}{q} - \frac{4}{q-1} + 2\delta' q^{-\delta'} + \frac{2}{q-1} q^{-\delta'} + 2\delta q^{-1-\delta} + \frac{2}{q-1} q^{-\delta} \right\} \cdot [y^n] G(q,y) \\
&\hphantom{=} - \frac{d}{q} \cdot [y^n] \gamma_0(y) \\
&\hphantom{=} + \sum_{i=0}^{\delta} \left( \frac{4}{q-1} + \left( 2i - 2\delta' - \frac{2}{q-1} \right) q^{i-\delta'} + \left( \frac{2i}{q} - \frac{2\delta}{q} - \frac{2}{q-1} \right) q^{i-\delta} \right) \cdot [y^n] \gamma_i(y).
\end{align*}
\end{theorem}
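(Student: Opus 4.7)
The plan is to begin from the decomposition $W_1(\mu_X^n,\mu_Y^n) = S_1 + S_2 + S_3$ of Theorem~\ref{thm:W1=S1+S2+S3} and rewrite each summand as a $[y^n]$-extraction applied to $G(q,y)$, $G_1(q,y)$, and the $\gamma_i(y)$. The translation rests on three elementary identities: $\sum_{\ell \ge 0} q^\ell g(\ell,n) = [y^n] G(q,y)$, $\sum_{\ell \ge 0} \ell q^{\ell-1} g(\ell,n) = [y^n] G_1(q,y)$, and $g(i,n) = [y^n]\gamma_i(y)$. A shifted sum such as $\sum_{h \ge 0} q^h g(h+k,n)$ is handled by the identity $q^{-k}\bigl([y^n]G(q,y) - \sum_{i=0}^{k-1} q^i [y^n]\gamma_i(y)\bigr)$, and an analogous formula, using $G_1$, handles $\sum_{h \ge 0} h q^h g(h+k,n)$.

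For $S_1$, I would split its weight $\tfrac{d}{2} + h$ into two pieces and apply the shift identities above with $k = d$; the four resulting sums collapse to produce a $2q(1-q^{-d})$-coefficient of $[y^n]G_1(q,y)$, a $d(1+q^{-d})$-coefficient of $[y^n]G(q,y)$, and a tail of $\gamma_i$ corrections for $0 \le i \le d-1$. The sum $S_2$ is already finite and linear in the $\gamma_i$, so nothing is required beyond rewriting it. For $S_3$, I would follow the suggested splitting $S_3 = S_3^{\heartsuit} + S_3^{\diamondsuit} - S_3^{\spadesuit}$, which cleanly separates the weight $\tfrac{d}{2} - i + h$ into a pure $d - 2i$ contribution and a pure $2h$ contribution, so that each resulting double sum factorizes.

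Within each of the three pieces of $S_3$, the inner sum over $h \ge 1$ has the form $\sum_{h \ge 1}(q-1)q^{h-1} g(j+h,n)$ or $\sum_{h \ge 1}(q-1)h q^{h-1} g(j+h,n)$ for some shift $j \in \{i, d-i\}$. Both types collapse via the substitution $\ell = j+h$, reducing to linear combinations of $[y^n]G(q,y)$, $[y^n]G_1(q,y)$, and corrections in the initial $\gamma_k$'s. The outer finite sum over $i \in \{1,\ldots,\delta\}$ is then evaluated directly, producing the closed-form coefficients involving $q^{-\delta}$ and $q^{1-\delta'}$ via standard geometric-series manipulations.

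The main obstacle is the bookkeeping. The five intermediate expressions for $S_1, S_2, S_3^{\heartsuit}, S_3^{\diamondsuit}, S_3^{\spadesuit}$ together produce $[y^n]\gamma_i(y)$ contributions for $i$ in the full range $\{0,\ldots,d-1\}$, whereas the theorem's final formula retains only $\gamma_0$ (with the special coefficient $-d/q$) and $\gamma_i$ for $0 \le i \le \delta$. One must therefore verify that the coefficients of $\gamma_i$ for $i \in \{\delta+1,\ldots,d-1\}$ cancel identically when the five pieces are assembled; the relation $\delta + \delta' = d$ enters crucially at this step. Modulo this tedious but routine verification, together with the simplifications $2q(1-q^{-d}) + 2(1-q^{-\delta}) - 2(q^{1-\delta'}-q^{1-d}) = 2q + 2 - 2q^{1-\delta'} - 2q^{-\delta}$ and the corresponding consolidation of the $[y^n]G(q,y)$ coefficient, the theorem follows by direct summation.
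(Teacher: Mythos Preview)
Your proposal is correct and follows essentially the same route as the paper: start from the decomposition $W_1 = S_1 + S_2 + S_3$ of Theorem~\ref{thm:W1=S1+S2+S3}, split $S_3 = S_3^{\heartsuit} + S_3^{\diamondsuit} - S_3^{\spadesuit}$, translate each of the five pieces into linear combinations of $[y^n]G_1(q,y)$, $[y^n]G(q,y)$, and $[y^n]\gamma_i(y)$ via exactly the shift identities you describe, and then sum. The cancellation you flag for $\gamma_i$ with $\delta+1 \le i \le d-1$ is real and is precisely what collapses the combined expression to the stated formula; the paper also carries $\gamma_i$ terms over the full range $0 \le i \le d-1$ in its intermediate expressions for $S_1$, $S_2$, $S_3^{\heartsuit}$, and $S_3^{\spadesuit}$ before this cancellation occurs.
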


The theorem above gives a useful corollary in the case where the $\gamma_i$ terms are ``small'' as $n \to \infty$.

\begin{corollary} \label{cor:gamma-small-good-asymp}
Suppose that for every $i = 0, 1, \ldots, d-1$, we have
\[
\left| [y^n] \gamma_i(y) \right| \to 0,
\]
as $n \to \infty$. Then, the transportation distance $W = W_1\!\left( \mu_X^n, \mu_Y^n \right)$ satisfies
\begin{align*}
W &= \left\{ 2q+2-2q^{1-\delta'} - 2q^{-\delta}\right\} \cdot [y^n] G_1\!(q,y) \\
&\hphantom{=} + \left\{ d + \frac{d}{q} - \frac{4}{q-1} + 2\delta' q^{-\delta'} + \frac{2}{q-1} q^{-\delta'} + 2\delta q^{-1-\delta} + \frac{2}{q-1} q^{-\delta} \right\} \cdot [y^n] G(q,y) + o(1),
\end{align*}
as $n \to \infty$.
\end{corollary}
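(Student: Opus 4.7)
The plan is to obtain the corollary as an immediate consequence of Theorem \ref{thm:W-in-gen}, by isolating the $\gamma_i$-contributions and absorbing them into the error term. The exact formula in that theorem decomposes $W = W_1(\mu_X^n,\mu_Y^n)$ into three kinds of summands: a fixed constant times $[y^n]G_1(q,y)$, a fixed constant times $[y^n]G(q,y)$, and a finite linear combination of coefficients $[y^n]\gamma_i(y)$ for $i$ ranging over $\{0,1,\ldots,\delta\}$ (the stray term $-\tfrac{d}{q}[y^n]\gamma_0(y)$ being absorbable into the $i=0$ summand of the main sum). The two non-$\gamma$ parts reappear verbatim in the statement of the corollary, so the entire task reduces to showing that the $\gamma_i$-portion is $o(1)$ as $n\to\infty$.

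First I would verify that every index $i$ appearing under a $\gamma_i$ in Theorem \ref{thm:W-in-gen} lies in the hypothesis range $\{0,1,\ldots,d-1\}$. Since $\delta = \lfloor d/2 \rfloor \le d-1$ for every $d \ge 1$, this index check is immediate, and therefore the assumption $|[y^n]\gamma_i(y)| \to 0$ applies to each term that appears. Next I would note that each coefficient multiplying a $[y^n]\gamma_i(y)$ is a fixed rational expression in the parameters $q,d,\delta,\delta',i$ and in particular is a constant independent of $n$; consequently each product of the form $(\text{coefficient}) \cdot [y^n]\gamma_i(y)$ is $o(1)$ by hypothesis, and a finite sum of $o(1)$ quantities remains $o(1)$. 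Subtracting this aggregate error from the exact identity of Theorem \ref{thm:W-in-gen} leaves precisely the asymptotic expression asserted by the corollary.

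There is no genuine obstacle here: the hypothesis is engineered exactly to annihilate the $\gamma_i$-summands, and all structural work has already been carried out in establishing Theorem \ref{thm:W-in-gen}. The only real checkpoint is the uniform index bound $0 \le i \le \delta \le d-1$, which legitimizes applying the hypothesis simultaneously to every $\gamma_i$ appearing in the closed-form expression for $W$.
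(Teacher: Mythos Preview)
Your proposal is correct and matches the paper's approach: the paper presents this corollary with no explicit proof, treating it as an immediate consequence of Theorem~\ref{thm:W-in-gen} once the $\gamma_i$-terms are known to be $o(1)$. Your index check $0 \le i \le \delta \le d-1$ and the observation that the $\gamma_i$-coefficients are constants independent of $n$ are exactly the minor verifications needed.
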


Examples of families of radially symmetric measures where the assumption of Corollary \ref{cor:gamma-small-good-asymp} is satisfied include (i) the family of measures from simple random walks with laziness, (ii) the family of uniform measures of spheres, and (iii) the family of uniform measures of balls. Precise definitions of these families are given in Section \ref{sec:lin-asymp}, where we investigate the three families closely and produce explicit formulas of them.

We remark that Theorem \ref{thm:W-in-gen} simplifies nicely in the case $d = 1$, where $X$ and $Y$ are adjacent vertices in $\cG$. Here, $\delta' = 1$ and $\delta = 0$. The function $\gamma_0$ is the only $\gamma_i$ term, since $d-1 = 0$.

\begin{corollary}\label{cor:dist-1-formula}
When $\dist(X,Y) = 1$, the transportation distance between $\mu_X^n$ and $\mu_Y^n$ is
\[
W_1\!\left( \mu_X^n, \mu_Y^n \right) = (2q-2) \cdot [y^n] G_1\!(q,y) + \left( \frac{q+1}{q} \right) \cdot [y^n] G(q,y) - \frac{1}{q} \cdot [y^n] \gamma_0(y).
\]
\end{corollary}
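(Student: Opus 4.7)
The plan is to obtain this corollary as a direct specialization of Theorem \ref{thm:W-in-gen} to the case $d = 1$, so no new ideas or estimates are needed; the work is purely algebraic bookkeeping. First I would set $d = 1$, which forces $\delta = \lfloor 1/2 \rfloor = 0$ and $\delta' = \lceil 1/2 \rceil = 1$. With these values in hand, the four contributions in Theorem \ref{thm:W-in-gen} (the $G_1\!(q,y)$ term, the $G(q,y)$ term, the explicit $\gamma_0(y)$ term, and the sum $\sum_{i=0}^{\delta} (\cdots) \gamma_i(y)$) reduce respectively as follows.

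The coefficient of $[y^n] G_1\!(q,y)$ becomes $2q + 2 - 2q^{1-1} - 2q^{-0} = 2q - 2$, matching the claim. For the coefficient of $[y^n] G(q,y)$, substituting gives
\begin{equation*}
1 + \tfrac{1}{q} - \tfrac{4}{q-1} + \tfrac{2}{q} + \tfrac{2}{q(q-1)} + \tfrac{2}{q-1},
\end{equation*}
and combining over the common denominator $q(q-1)$ I would check that this collapses to $(q+1)/q$. The explicit $\gamma_0$ contribution is $-d/q = -1/q$ directly.

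The only slightly less trivial step is the summation over $i$, which for $\delta = 0$ consists solely of the $i=0$ term. With $i=0$, $\delta=0$, $\delta'=1$, this coefficient is
\begin{equation*}
\tfrac{4}{q-1} + \bigl(-2 - \tfrac{2}{q-1}\bigr) q^{-1} + \bigl(- \tfrac{2}{q-1}\bigr),
\end{equation*}
which I would verify simplifies to $0$. Combining this with the explicit $-1/q$ from the $\gamma_0$ term gives the total $\gamma_0(y)$ coefficient of $-1/q$, as claimed.

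The entire argument is a finite substitution plus four elementary simplifications over the denominator $q(q-1)$. There is no genuine obstacle; the only source of error is arithmetic slippage in the $G(q,y)$ and $\gamma_0(y)$ coefficients, so I would present those two simplifications explicitly (bringing each expression to the common denominator $q(q-1)$ and reading off the numerator) and dismiss the $G_1\!(q,y)$ coefficient and the vanishing of the $i=0$ summand as immediate.
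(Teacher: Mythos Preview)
Your proposal is correct and follows exactly the approach the paper takes: the corollary is stated immediately after Theorem \ref{thm:W-in-gen} as the specialization $d=1$, $\delta'=1$, $\delta=0$, with the paper noting that $\gamma_0$ is the only $\gamma_i$ term. Your algebraic checks of the four coefficients are accurate and constitute all that is needed.
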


\bigskip

\section{Explicit Linear Asymptotic Formulas}
\label{sec:lin-asymp}
The main results of this section are Theorems \ref{thm:SRW_An+B}, \ref{thm:sph_An+B}, and \ref{thm:ball_An+B}, which give explicit linear asymptotic formulas for the transportation distance $W_1(\mu_X^n, \mu_Y^n)$ in the cases of simple random walks, spheres, and balls, respectively. We derive the formulas by combining Corollary \ref{cor:gamma-small-good-asymp} with techniques from generating function theory and from analytic combinatorics.

In Subsection \ref{subsec:cauchy}, we develop some tools from analytic combinatorics. The standard references which we recommend are the book of Flajolet and Sedgewick \cite{FS09} and the book of Lang \cite{Lang99}. In Subsection \ref{subsec:LA-SRW}, we treat the case in which $\mu_X^n$ and $\mu_Y^n$ are radially symmetric measures from simple random walks. This case is more complicated than the two subsequent cases in the section, because of the complexity of generating functions involved. Afterwards, we treat the case of spheres in Subsection \ref{subsec:LA-sph}, and then the case of balls in Subsection \ref{subsec:LA-balls}.

We remark that there is a closely related work done by Woess \cite[Chapter 19]{Woess}. While Woess does not focus on computing transportation distances, Woess studies the probability distribution from the lazy random walk on the infinite regular tree and obtains generating function formulas similar to our formulas in Theorem \ref{thm:SRW-formula}. The approach of Woess' is slightly different than our approach in Subsection \ref{subsec:LA-SRW}.

\bigskip

\subsection{Consequences of Cauchy's Coefficient Formula} \label{subsec:cauchy}
We make a short detour to discuss some useful tools from complex analysis. What we will see in this subsection are standard techniques from analytic combinatorics. Our detour will explore merely a tiny part of the subject to develop useful lemmas. We recommend the book of Flajolet and Sedgewick \cite{FS09} to the readers who would like to delve further into the subject of analytic combinatorics. For complex analysis reference, we recommend the book of Lang \cite{Lang99}. In this subsection, we will first recall Cauchy's Coefficient Formula (Theorem \ref{thm:CCF}). The formula extracts a coefficient of a power series in terms of an integral over a simple loop. As a consequence of the formula, we will exhibit an exponential decay behavior of coefficients in Corollary \ref{cor:expo-decay}. The main goal of this subsection is to show Lemmas \ref{l:simple-pole} and \ref{l:double-pole}, which give estimates of coefficients of functions with pole at $1$. These lemmas are useful in our proofs of Propositions \ref{prop:SRW_G1} and \ref{prop:SRW_G}.

\smallskip

We start by recalling {\em Cauchy's Coefficient Formula}.

\begin{theorem}[Cauchy's Coefficient Formula, cf. e.g. \cite{FS09} or \cite{Lang99}]\label{thm:CCF}
Let $U \subseteq \mathbb{C}$ be a simply connected, open domain containing $0 \in \mathbb{C}$. Let $f: U \to \mathbb{C}$ be a holomorphic function on $U$ with the following series expansion at $0$:
\[
f(z) = a_0 + a_1 z + a_2 z^2 + \cdots.
\]
Suppose that $\gamma$ is a simple counterclockwise closed loop (with winding number $+1$) around $0$ inside the domain $U$. Then,
\[
a_n = \frac{1}{2\pi i} \int_{\gamma} \frac{f(z)}{z^{n+1}} \, dz.
\]
\end{theorem}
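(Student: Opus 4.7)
The plan is to prove this by reducing the integral to one over a small circle around the origin, then evaluating it by term-by-term integration of the power series.

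First, I would perform a contour deformation. Let $r > 0$ be small enough that the closed disk $\{|z| \le r\}$ lies in $U$ and inside the disk of convergence of $\sum_{k \ge 0} a_k z^k$, and let $C_r$ denote the counterclockwise circle of radius $r$. The integrand $g(z) := f(z)/z^{n+1}$ is holomorphic on $U \setminus \{0\}$. Since both $\gamma$ and $C_r$ have winding number $+1$ around $0$ and both are contained in $U$, the cycle $\gamma - C_r$ has winding number $0$ around every point of $\mathbb{C} \setminus (U \setminus \{0\})$: it winds zero times around $0$ (by construction) and zero times around points outside $U$ (because both loops already do, by simple connectedness of $U$). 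The homology version of Cauchy's theorem then gives
\[
\int_\gamma \frac{f(z)}{z^{n+1}} \, dz = \int_{C_r} \frac{f(z)}{z^{n+1}} \, dz.
\]

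Next, I would evaluate the right-hand side directly. Because $r$ lies strictly within the radius of convergence, the series $\sum_{k \ge 0} a_k z^{k-n-1}$ converges uniformly on $C_r$, so one can interchange sum and integral. Using the elementary identity $\int_{C_r} z^m \, dz = 2\pi i$ when $m = -1$ and $0$ otherwise, only the $k = n$ term survives, producing $2\pi i \cdot a_n$. Dividing by $2\pi i$ gives the claimed formula.

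The only subtle point is the contour deformation in the first step, since $U \setminus \{0\}$ is generally not simply connected so one cannot appeal to the most elementary form of Cauchy's theorem. The cleanest route is the homology form just described; an equivalent hands-on alternative is to invoke the Jordan curve theorem to see that $\gamma$ bounds a region containing $0$, cut out a small disk around $0$, and apply the standard Cauchy theorem on the resulting annular region whose boundary is $\gamma - C_r$. Either way, the argument is quick, and the rest of the proof is routine uniform-convergence bookkeeping.
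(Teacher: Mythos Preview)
The paper does not supply its own proof of this statement: Theorem~\ref{thm:CCF} is quoted from the standard references \cite{FS09} and \cite{Lang99} and used as a black box. Your argument is correct and is precisely the standard proof one finds in those references---deform the contour to a small circle via the homology form of Cauchy's theorem (or equivalently by excising a disk around the origin), then integrate the power series term by term using uniform convergence and the orthogonality identity $\int_{C_r} z^m\,dz = 2\pi i\,\delta_{m,-1}$. There is nothing to compare here beyond noting that your write-up matches the textbook treatment the paper defers to.
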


We obtain bounds on the magnitude of coefficients of holomorphic functions as a consequence of Cauchy's formula. The following corollary says that if a function is holomorphic on an open disk of radius larger than $1$ centered at $0$, then the magnitudes of coefficients of $f$ (as a series expanded around $0 \in \mathbb{C}$) exhibit an exponential decay. This is known as {\em Cauchy's inequality} or {\em Cauchy's bound}.

\begin{corollary}[Cauchy's inequality]\label{cor:expo-decay}
Let $0 < \varepsilon' < \varepsilon$ be real numbers. Let $D \subseteq \mathbb{C}$ denote the open disk of radius $1+\varepsilon$ centered at $0$. Suppose that $f:D \to \mathbb{C}$ is holomorphic with series expansion
\[
f(z) = a_0 + a_1 z + a_2 z^2 + \cdots.
\]
Then, $|a_n| = O \!\left( (1+\varepsilon')^{-n} \right)$, as $n \to \infty$. In particular, the sequence $\{a_n\}$ converges to $0$.
\end{corollary}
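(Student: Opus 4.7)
The plan is to apply Cauchy's Coefficient Formula (Theorem \ref{thm:CCF}) with a carefully chosen circular contour. Fix $\varepsilon'$ with $0 < \varepsilon' < \varepsilon$, and set $r := 1 + \varepsilon'$. Since $r < 1 + \varepsilon$, the closed disk of radius $r$ centered at $0$ is contained in the open disk $D$ on which $f$ is holomorphic. Let $\gamma$ be the counterclockwise circle of radius $r$ centered at $0$, parameterized by $z = re^{i\theta}$ for $\theta \in [0, 2\pi]$; this is a simple closed loop with winding number $+1$ around $0$, lying inside $D$, so Theorem \ref{thm:CCF} applies.

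Next, I would extract an absolute bound. Since $f$ is holomorphic on $D$, it is in particular continuous on the compact circle $|z| = r$, so
\[
M := \max_{|z| = r} |f(z)|
\]
is finite. Applying Cauchy's Coefficient Formula and the standard $ML$-estimate for contour integrals yields
\[
|a_n| = \left| \frac{1}{2 \pi i} \int_{\gamma} \frac{f(z)}{z^{n+1}} \, dz \right| \le \frac{1}{2\pi} \cdot \frac{M}{r^{n+1}} \cdot (2\pi r) = \frac{M}{r^n} = \frac{M}{(1+\varepsilon')^n}.
\]
Since $M$ depends only on $f$ and $\varepsilon'$ (not on $n$), this immediately gives $|a_n| = O\!\left( (1+\varepsilon')^{-n} \right)$ as $n \to \infty$.

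Finally, for the ``in particular'' clause, note that $1 + \varepsilon' > 1$ forces $(1+\varepsilon')^{-n} \to 0$ as $n \to \infty$, so the bound above forces $a_n \to 0$.

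There is really no main obstacle here; the result is a standard application of Cauchy's formula together with the trivial contour estimate. The only subtlety worth flagging is the choice of radius: one must pick the contour strictly inside the domain of holomorphy (i.e.\ $r < 1 + \varepsilon$) so that $M$ is finite, while keeping $r > 1$ so that $r^{-n}$ decays geometrically. The parameter $\varepsilon'$ in the statement is precisely what affords this flexibility.
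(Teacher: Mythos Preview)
Your proof is correct and follows essentially the same approach as the paper: choose the circular contour of radius $r = 1+\varepsilon'$, apply Cauchy's Coefficient Formula, and bound the integral by the supremum of $|f|$ on the circle times $r^{-n}$. The paper spells out the polar-coordinate computation while you invoke the $ML$-estimate, but the argument is identical in substance.
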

\begin{proof}
Let $r := 1 + \varepsilon'$. Let $\gamma$ be the simple closed loop $\{r e^{i\theta}: 0 \le \theta \le 2\pi\}$, oriented counterclockwise. Cauchy's Coefficient Formula says
\[
a_n = \frac{1}{2\pi i} \int_{\gamma} \frac{f(z)}{z^{n+1}} \, dz.
\]
By switching to polar coordinates, we find
\[
a_n = \frac{1}{2\pi i} \int_0^{2\pi} \frac{f(re^{i\theta})}{r^n e^{ni\theta}} \cdot i \, d\theta.
\]
This implies
\[
|a_n| \le \frac{1}{2\pi} \int_0^{2\pi} |f(re^{i\theta})| \cdot r^{-n} \, d\theta \le r^{-n} \sup_{z \in \gamma} |f(z)|.
\]
Thus, $|a_n| = O(r^{-n})$, as desired.
\end{proof}

Next, we prove two lemmas which give estimates on coefficients of certain functions. We will use these lemmas to prove Propositions \ref{prop:SRW_G1} and \ref{prop:SRW_G}.

\begin{lemma}\label{l:simple-pole}
Let $\varepsilon > 0$. Let $B$ be the open disk of radius $1+\varepsilon$ centered at $0$. Suppose that the function $f: B \setminus \{1\} \to \mathbb{C}$ is holomorphic with the following series expansion around $0$:
\[
f(z) = a_0 + a_1 z + a_2 z^2 + \cdots.
\]
Suppose also that the function $(1-z)f(z)$ can be extended analytically to a holomorphic function $g: B \to \mathbb{C}$. Then,
\[
a_n = g(1) + o(1),
\]
as $n \to \infty$.
\end{lemma}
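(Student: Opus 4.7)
The plan is to remove the simple pole at $z=1$ by subtracting an appropriate multiple of $1/(1-z)$, reducing to a function holomorphic on all of $B$, and then invoking Corollary \ref{cor:expo-decay} for exponential decay.

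More concretely, on $B \setminus \{1\}$ we have $f(z) = g(z)/(1-z)$ by hypothesis. I would consider the auxiliary function
\[
h(z) := f(z) - \frac{g(1)}{1-z} = \frac{g(z) - g(1)}{1 - z},
\]
defined a priori on $B \setminus \{1\}$. Since $g$ is holomorphic on $B$ and $g(z) - g(1)$ vanishes at $z=1$, the quotient $(g(z)-g(1))/(1-z)$ has a removable singularity at $z=1$; thus $h$ extends to a holomorphic function on all of $B$. Its radius of convergence at $0$ is therefore at least $1+\varepsilon > 1$.

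Next I would apply Corollary \ref{cor:expo-decay} to $h$ (picking any $0 < \varepsilon' < \varepsilon$) to conclude that $[z^n] h(z) \to 0$, indeed exponentially fast. Finally, from the geometric series $g(1)/(1-z) = g(1)\sum_{n \ge 0} z^n$, one reads off $[z^n]\, g(1)/(1-z) = g(1)$, so comparing coefficients in the identity $f(z) = h(z) + g(1)/(1-z)$ gives
\[
a_n = [z^n] h(z) + g(1) = g(1) + o(1),
\]
as $n \to \infty$, which is the claim.

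There is no real obstacle here; the only subtlety is justifying that $h$ extends holomorphically across $z=1$, which is the standard removable singularity argument (equivalently, writing $g(z) = g(1) + (z-1)\tilde{g}(z)$ with $\tilde{g}$ holomorphic on $B$, so that $h(z) = -\tilde{g}(z)$). I would state this cleanly and then cite Corollary \ref{cor:expo-decay} to conclude.
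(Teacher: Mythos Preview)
Your proposal is correct and matches the paper's proof essentially line for line: the paper also subtracts $g(1)/(1-z)$ from $f$, observes that $(g(z)-g(1))/(1-z)$ extends holomorphically to $B$, invokes Corollary~\ref{cor:expo-decay} to get $b_n \to 0$, and reads off $a_n = g(1) + b_n$.
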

\begin{proof}
The function
\[
f(z) - \frac{g(1)}{1-z} = \frac{g(z) - g(1)}{1-z}, \tag{$\ast$}
\]
originally defined on the punctured disk $B \setminus \{1\}$, can be analytically extended to a holomorphic function on the whole disk $B$, because $g$ is complex differentiable on $B$. Write this function in ($\ast$) as the following series
\[
b_0 + b_1 z + b_2 z^2 + \cdots
\]
around $0$. By Corollary \ref{cor:expo-decay}, we have that $b_n \to 0$ as $n \to \infty$. Note that the function $\frac{g(1)}{1-z}$ has the expansion
\[
g(1) + g(1) z + g(1) z^2 + \cdots
\]
around $0$. Hence, the relation ($\ast$) shows that $a_n = g(1) + b_n = g(1) + o(1)$, as $n \to \infty$.
\end{proof}

It is not difficult to extend Lemma \ref{l:simple-pole} to higher orders of the pole at $1$. We show the case of double pole (which will be useful in the proof of Proposition \ref{prop:SRW_G1}) in the following lemma. The proof of the following lemma goes in a similar fashion as the one in the previous lemma.

\begin{lemma}\label{l:double-pole}
Let $\varepsilon > 0$. Let $B$ be the open disk of radius $1+\varepsilon$ centered at $0$. Suppose that the function $f: B \setminus \{1\} \to \mathbb{C}$ is holomorphic with the following series expansion around $0$:
\[
f(z) = a_0 + a_1 z + a_2 z^2 + \cdots.
\]
Suppose also that the function $(1-z)^2 f(z)$ can be extended analytically to a holomorphic function $g: B \to \mathbb{C}$. Then,
\[
a_n = g(1) \cdot n + \left( g(1) - g'(1) \right) + o(1),
\]
as $n \to \infty$.
\end{lemma}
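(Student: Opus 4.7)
The plan is to mimic the argument of Lemma \ref{l:simple-pole}, but now subtract off a principal part of order two at $z = 1$. Concretely, since $g$ is holomorphic on $B$, we can expand it in a Taylor series around $z = 1$ and write
\[
g(z) = g(1) + g'(1)(z-1) + (z-1)^2 h(z),
\]
where $h : B \to \mathbb{C}$ is holomorphic (namely, $h$ is the function obtained by formally dividing the tail of the Taylor series of $g$ at $1$ by $(z-1)^2$; analyticity on all of $B$ is immediate because the singularity of the quotient at $z=1$ is removable).

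Dividing through by $(1-z)^2$, and using that $g(z) = (1-z)^2 f(z)$ on $B \setminus \{1\}$, we obtain the decomposition
\[
f(z) \;=\; \frac{g(1)}{(1-z)^2} \;-\; \frac{g'(1)}{1-z} \;+\; h(z),
\]
valid on $B \setminus \{1\}$. The first two summands are explicit: their coefficients of $z^n$ are $g(1) \cdot (n+1)$ and $-g'(1)$, respectively, using the standard expansions $\frac{1}{(1-z)^2} = \sum_{n\ge 0}(n+1)z^n$ and $\frac{1}{1-z} = \sum_{n\ge 0} z^n$.

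For the third summand, $h$ is holomorphic on the whole open disk $B$ of radius $1+\varepsilon$, so Corollary \ref{cor:expo-decay} gives that its $z^n$ coefficient is $O((1+\varepsilon')^{-n}) = o(1)$ for any $0 < \varepsilon' < \varepsilon$. Adding the three contributions yields
\[
a_n \;=\; g(1)(n+1) - g'(1) + o(1) \;=\; g(1) \cdot n + \bigl(g(1) - g'(1)\bigr) + o(1),
\]
as $n \to \infty$, as claimed. There is no real obstacle: the only subtlety worth checking is that the ``remainder'' function $h$ is holomorphic at the point $z=1$ itself, which holds because $g(z) - g(1) - g'(1)(z-1)$ vanishes to order at least two at $z=1$, making the division by $(z-1)^2$ removable.
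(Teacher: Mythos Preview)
Your proof is correct and follows essentially the same approach as the paper: both subtract off the principal part $\dfrac{g(1)}{(1-z)^2} - \dfrac{g'(1)}{1-z}$ from $f$ and observe that the remainder extends holomorphically to all of $B$, then apply Corollary~\ref{cor:expo-decay}. The paper phrases this by writing the remainder as $\dfrac{\bigl(\frac{g(z)-g(1)}{z-1}\bigr) - g'(1)}{z-1}$, which is exactly your $h(z)$.
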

\begin{proof}
The proof is analogous to the one in the previous lemma. Instead of the function in ($\ast$), here note that the function
\[
f(z) + \frac{g'(1)}{1-z} - \frac{g(1)}{(1-z)^2} = \frac{\left( \frac{g(z)-g(1)}{z-1} \right) - g'(1)}{z-1},
\]
can be extended analytically to the whole disk $B$.
\end{proof}

\bigskip

\subsection{Linear Asymptotics for Simple Random Walks with Laziness} \label{subsec:LA-SRW}
In this subsection, fix a real number $\alpha \in [0,1)$ and positive integers $d \in \mathbb{Z}_{\ge 1}$, $q \in \mathbb{Z}_{\ge 2}$. Our graph $\cG$ is the infinite regular tree $\mathbb{T}_{q+1}$. The parameter $\alpha$ is referred to as the {\em laziness} of the random walk. We consider the special case in which our family $\{\mu_u^n\}_{u \in V(\cG), n \in \mathbb{Z}_{\ge 0}}$ of probability distributions is from simple random walks on $\cG$ with laziness $\alpha$. More precisely, for each vertex $u$ in $\cG$, consider the simple random walk $p_0, p_1, p_2, \ldots$ where each $p_i$ is a random vertex of $\cG$ given by
\begin{itemize}
\item $p_0 = u$,
\item for each $i \ge 0$, we have $p_{i+1} = p_i$ with probability $\alpha$, and
\item for each $i \ge 0$, and for any neighbor $v$ of $p_i$, we have $p_{i+1} = v$ with probability $\frac{1-\alpha}{q+1}$.
\end{itemize}
In this subsection, we consider the distributions $\mu_u^n$ to be $\mu_u^n = \fm_u^n$, where
\[
\fm_u^n(v) = \bbP\!\left( p_n = v \right),
\]
for every vertex $u, v \in V(\cG)$. That is, $\fm_u^n$ is the probability mass function of the simple random walk which starts at the vertex $u$ at Step $0$ and which has laziness $\alpha$.

The goal of this subsection is to use Corollary \ref{cor:gamma-small-good-asymp} to derive a precise asymptotic formula for $W_1 \! \left( \fm_X^n, \fm_Y^n \right)$, where $X$ and $Y$ are two vertices which are at distance $d$ apart, in the form
\[
W_1 \! \left( \fm_X^n, \fm_Y^n \right) = A_{\alpha, d, q}^{\text{SRW}} \cdot n + B_{\alpha, d, q}^{\text{SRW}} + o(1),
\]
as $n \to \infty$. Here, $A_{\alpha, d, q}^{\text{SRW}}$ and $B_{\alpha, d, q}^{\text{SRW}}$ are real constants depending only on $\alpha, d, q$.

Let us now describe the strategy we take for this subsection. We first set up the linear recurrences for $g(\ell, n)$ in this case in Proposition \ref{prop:SRW-recurrence}. The linear recurrences are solved in Theorem \ref{thm:SRW-formula}, in which we discover the closed forms of relevant generating functions. From the closed forms, we derive the asymptotic formula for the transportation distance in Theorem \ref{thm:SRW_An+B}.

Recall the notation $g(\ell, n)$ from Section \ref{sec:S1S2S3}. In the case of simple random walks, we have the following proposition.
\begin{proposition}\label{prop:SRW-recurrence}
For simple random walks, the numbers $\{g(\ell, n)\}_{\ell, n \ge 0}$ satisfy
\begin{itemize}
\item $g(\ell, n) = 0$ if $\ell > n$,
\item $g(0,0) = 1$,
\item for every $n \ge 0$, we have
\[
g(0, n+1) = \alpha \cdot g(0,n) + (1-\alpha) \cdot g(1,n),
\]
and
\item for every $n \ge 0$ and every $\ell \ge 1$, we have
\[
g(\ell, n+1) = \alpha \cdot g(\ell, n) + q \cdot \frac{1-\alpha}{q+1} \cdot g(\ell+1, n) + \frac{1-\alpha}{q+1} \cdot g(\ell-1, n).
\]
\end{itemize}
\end{proposition}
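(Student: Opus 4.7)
The plan is to verify each bullet of Proposition \ref{prop:SRW-recurrence} directly from the definition of the lazy simple random walk, combined with the radial symmetry of the starting distribution (so that $g(\ell,n)$ depends only on the distance $\ell$ between the starting and ending vertices, and not on the vertices themselves). Fix vertices $u,v \in V(\cG)$ with $\dist(u,v) = \ell$; recall $g(\ell,n) = \mathfrak{m}_u^n(v) = \mathbb{P}(p_n = v \mid p_0 = u)$.

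For the first two bullets, I would observe that in one step the walker either stays or moves to an adjacent vertex, so by induction on $n$ the walker at time $n$ is within graph distance $n$ of $u$; hence $g(\ell,n) = 0$ whenever $\ell > n$. The equality $g(0,0) = 1$ is just the initial condition $p_0 = u$.

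For the recurrences, the main tool is a one-step conditioning (law of total probability on the position $p_n$ at time $n$):
\[
g(\ell, n+1) \;=\; \sum_{w \in V(\cG)} \mathbb{P}(p_n = w \mid p_0 = u)\cdot \mathbb{P}(p_{n+1} = v \mid p_n = w).
\]
The transition probabilities force only $w \in \{v\} \cup N(v)$ to contribute: $w = v$ contributes $\alpha \cdot g(\ell,n)$, and each neighbor $w \in N(v)$ contributes $\frac{1-\alpha}{q+1}\cdot \mathfrak{m}_u^n(w)$. The crucial step is to count how many neighbors of $v$ lie at each distance from $u$, which uses the tree structure of $\mathbb{T}_{q+1}$. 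For $\ell = 0$ (i.e.\ $v = u$), all $q+1$ neighbors of $v$ lie at distance $1$ from $u$, giving the stated identity $g(0,n+1) = \alpha \cdot g(0,n) + (q+1)\cdot \frac{1-\alpha}{q+1}\cdot g(1,n)$. For $\ell \ge 1$, the unique neighbor of $v$ on the geodesic from $u$ to $v$ lies at distance $\ell - 1$ from $u$, while the remaining $q$ neighbors of $v$ (those off the geodesic) each lie at distance $\ell + 1$ from $u$; this neighbor-counting is where the tree hypothesis is essential, and it immediately yields the third recurrence.

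No step here is genuinely hard; the only thing to be careful about is the neighbor count at distance $\ell + 1$, which is $q$ rather than $q+1$ (the geodesic neighbor is excluded). Everything else is bookkeeping, and all equalities $\mathfrak{m}_u^n(w) = g(\dist(u,w), n)$ rely on the radial symmetry of $\mathfrak{m}_u^n$, which is evident from the definition of the lazy simple random walk (the transition kernel treats all neighbors of a given vertex symmetrically, so the induced distributions from $u$ depend only on the distance to $u$).
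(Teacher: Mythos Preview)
Your argument is correct and is exactly the natural one: condition on the last step, use the Markov property, and count how many neighbors of $v$ lie at each distance from $u$ in the tree. The paper does not actually supply a proof of this proposition; it states the recurrences and immediately proceeds to derive the functional equation (\eighthnote), treating Proposition~\ref{prop:SRW-recurrence} as self-evident from the definition of the lazy walk. So your write-up simply makes explicit what the paper leaves to the reader, and there is nothing to compare.
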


Recall the notations $G, G_1, \gamma_i$ from Section \ref{sec:Wgenfn}. For convenience, we write $\gamma := \gamma_0 \in \mathbb{R}[\![y]\!]$. From Proposition \ref{prop:SRW-recurrence}, we obtain the functional equation
\begin{align*}
qx &= \left( (q+1)(x-\alpha xy) - q(1-\alpha)y - (1-\alpha)x^2y \right) \cdot G(x,y) \tag{\eighthnote} \\
&\hphantom{=} + \left( -x + \alpha xy + q(1-\alpha) y \right) \cdot \gamma(y).
\end{align*}

Since we find the following idea interesting, let us now describe how we will solve Equation (\eighthnote) before we actually start solving it. We will prove Lemma \ref{lemma:unique-G-gamma}, which says that the solution to Equation (\eighthnote) is unique. With uniqueness, it suffices to simply give an example of a pair of formal series $(G,\gamma)$ which satisfies the equation. Then, the example is the desired solution. We think this is peculiarly interesting, as it seems like we are in a situation where we have ``one equation with two unknowns,'' but the solution $(G,\gamma)$ is nevertheless unique. Note that we do {\em not} need to assume the relation $\gamma(y) = G(0,y)$ between $G$ and $\gamma$ in Lemma \ref{lemma:unique-G-gamma}. In fact, Equation (\eighthnote) {\em implies} that $\gamma(y) = G(0,y)$.

\begin{lemma}\label{lemma:unique-G-gamma}
Suppose that the real number $\alpha \in [0,1)$ and the integer $q \ge 2$ are given. There exist a unique pair of formal power series $\wt{G}(x,y) \in \mathbb{R}[\![x,y]\!]$ and $\wt{\gamma}(y) \in \mathbb{R}[\![y]\!]$ which satisfy
\begin{align*}
qx &= \left( (q+1)(x-\alpha xy) - q(1-\alpha)y - (1-\alpha)x^2y \right) \cdot \wt{G}(x,y) \tag{$\wt{\eighthnote}$} \\
&\hphantom{=} + \left( -x + \alpha xy + q(1-\alpha) y \right) \cdot \wt{\gamma}(y).
\end{align*}
\end{lemma}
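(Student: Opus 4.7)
Existence requires no extra work: the pair $(G,\gamma)$ produced from the simple random walk via Proposition~\ref{prop:SRW-recurrence} satisfies $(\wt{\eighthnote})$, as was derived in the paragraph preceding the lemma. So the real task is uniqueness. Suppose $(\wt G_1,\wt\gamma_1)$ and $(\wt G_2,\wt\gamma_2)$ both satisfy $(\wt{\eighthnote})$, and set $H := \wt G_1 - \wt G_2 \in \mathbb{R}[\![x,y]\!]$ and $h := \wt\gamma_1 - \wt\gamma_2 \in \mathbb{R}[\![y]\!]$. By linearity, $(H,h)$ satisfies the homogeneous equation
\[
0 \;=\; P(x,y)\,H(x,y) + Q(x,y)\,h(y),
\]
where $P(x,y) = (q+1)(x-\alpha xy) - q(1-\alpha)y - (1-\alpha)x^{2}y$ and $Q(x,y) = -x + \alpha xy + q(1-\alpha)y$. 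The goal is to show $H=0$ and $h=0$.

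The first key step is to substitute $x=0$. Since $P(0,y) = -q(1-\alpha)y$ and $Q(0,y) = q(1-\alpha)y$, the homogeneous equation reduces to $q(1-\alpha)\,y\bigl(h(y) - H(0,y)\bigr) = 0$ in $\mathbb{R}[\![y]\!]$. Because $q(1-\alpha)\ne 0$ and $y$ is a non-zero-divisor, this forces $h(y) = H(0,y)$. In particular, once we know $H = 0$, the conclusion $h = 0$ follows for free, so the whole lemma reduces to proving $H \equiv 0$.

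I would then expand $H(x,y) = \sum_{n\ge 0} H_n(x)\,y^n$ with $H_n \in \mathbb{R}[\![x]\!]$, and prove $H_n = 0$ by induction on $n$. The coefficient of $y^0$ in the homogeneous equation is $(q+1)x\,H_0(x) - x\,H_0(0) = 0$ (using $h_0 = H_0(0)$). Since $\mathbb{R}[\![x]\!]$ is a domain, $x$ cancels, giving $H_0 \equiv H_0(0)/(q+1)$, and evaluating at $x=0$ yields $H_0(0)(1 - 1/(q+1)) = 0$; as $q\ge 2$, we get $H_0\equiv 0$. For the inductive step, assuming $H_n = 0$ (hence $h_n = 0$), all contributions involving $H_n$ or $h_n$ in the $y^{n+1}$-coefficient vanish, and the equation collapses to exactly the base-case shape $(q+1)x\,H_{n+1}(x) - x\,H_{n+1}(0) = 0$, forcing $H_{n+1} \equiv 0$.

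The only real obstacle is careful bookkeeping when extracting the $y^{n+1}$-coefficient from $P(x,y)H(x,y) + Q(x,y)h(y)$: six terms arise from $P\cdot H$ and three from $Q\cdot h$. The pleasant feature that makes the induction go through uniformly is that every term depending on $H_n(x)$ or $h_n$ vanishes under the inductive hypothesis, so each inductive step reuses verbatim the base-case argument. No divisibility by $1-\alpha y$ or inversion of $P$ is needed, which is why the uniqueness holds as a purely algebraic fact about formal power series for every $\alpha \in [0,1)$ and $q \ge 2$.
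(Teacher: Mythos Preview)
Your argument is correct. Both you and the paper begin by substituting $x=0$ to obtain $h(y)=H(0,y)$ (equivalently, $\wt\gamma(y)=\wt G(0,y)$), but then diverge. The paper expands $\wt G$ in monomials $x^iy^j$ and, by comparing coefficients, shows that the numbers $\wt g_{i,j}$ are forced to satisfy exactly the four random-walk recurrences of Proposition~\ref{prop:SRW-recurrence}; uniqueness then follows because those recurrences determine all coefficients. You instead pass to the homogeneous equation for the difference $(H,h)$, expand $H=\sum_n H_n(x)\,y^n$ with $H_n\in\mathbb{R}[\![x]\!]$, and induct on $n$: writing $P=P_0(x)+P_1(x)y$ and $Q=Q_0(x)+Q_1(x)y$, the $y^{n+1}$-coefficient reads $P_0 H_{n+1}+P_1 H_n+Q_0 h_{n+1}+Q_1 h_n=0$, and since $H_n=h_n=0$ by induction this collapses to $(q+1)x\,H_{n+1}(x)-x\,H_{n+1}(0)=0$, the base-case identity. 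Your route is shorter and needs no separate treatment of the four recurrence properties; the paper's route has the expository bonus of exhibiting \emph{which} recurrence the functional equation encodes, making the link to the random walk explicit rather than implicit.
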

\begin{proof}
The existence is clear from our construction earlier. We will show uniqueness. Assuming $\wt{G}$ and $\wt{\gamma}$ satisfy the functional equation above, we will describe how to recover all the coefficients of $\wt{G}$ and $\wt{\gamma}$ uniquely.

First, plugging $x = 0$ into ($\wt{\eighthnote}$) yields
\[
0 = - q(1-\alpha) y \cdot \wt{G}(0,y) + q(1-\alpha) y \cdot \wt{\gamma}(y).
\]
Thus we obtain $\wt{\gamma}(y) = \wt{G}(0,y)$. It suffices to recover $\wt{G}(x,y)$, as the power series $\wt{\gamma}(y)$ can be obtained from $\wt{G}(x,y)$.

We write
\[
\wt{G}(x,y) := \sum_{i, j \ge 0} \wt{g}_{i,j} x^i y^j \in \mathbb{R}[\![x,y]\!].
\]
To recover the coefficients $\wt{g}_{i,j}$ of $\wt{G}$, it suffices to establish the recurrence relations analogous to the ones we have found in Proposition \ref{prop:SRW-recurrence}. Namely, we claim that
\begin{itemize}
\item[(i)] $\wt{g}_{i,j} = 0$ if $i>j$,
\item[(ii)] $\wt{g}_{0,0} = 1$,
\item[(iii)] for every $j \ge 0$, we have
\[
\wt{g}_{0,j+1} = \alpha \cdot \wt{g}_{0,j} + (1-\alpha) \cdot \wt{g}_{1,j},
\]
and
\item[(iv)] for every $j \ge 0$ and every $i \ge 1$, we have
\[
\wt{g}_{i,j+1} = \alpha \cdot \wt{g}_{i,j} + q \cdot \frac{1-\alpha}{q+1} \cdot \wt{g}_{i+1,j} + \frac{1-\alpha}{q+1} \cdot \wt{g}_{i-1,j}.
\]
\end{itemize}
We will obtain the four items in the following order: (ii), (iii), (iv), (i).

\smallskip

\textbf{(ii).} Consider the coefficient of $x$ of both sides of Equation ($\wt{\eighthnote}$). We find
\[
q = (q+1) \cdot \wt{g}_{0,0} - \wt{g}_{0,0}.
\]
This implies $\wt{g}_{0,0} = 1$.

\smallskip

\textbf{(iii).} Taking $\frac{\partial}{\partial x}$ of both sides of Equation ($\wt{\eighthnote}$) and then plugging in $x = 0$, we find
\[
q = \left((q+1)(1-\alpha y)\right) \cdot \wt{G}(0,y) - q(1-\alpha)y \left( \frac{\partial \wt{G}(x,y)}{\partial x} \Bigg|_{x = 0} \right) + (-1+\alpha y) \cdot \wt{\gamma}(y).
\]
Using that
\[
\frac{\partial \wt{G}(x,y)}{\partial x} \Bigg|_{x = 0} = \sum_{j \ge 0} \wt{g}_{1,j} y^j,
\]
and recalling that
\[
\wt{\gamma}(y) = \wt{G}(0,y) = \sum_{j \ge 0} \wt{g}_{0,j} y^j,
\]
we obtain
\[
\sum_{j \ge 0} \wt{g}_{0,j+1} y^{j+1} = \alpha \sum_{j \ge 0} \wt{g}_{0,j} y^{j+1} + (1-\alpha) \sum_{j \ge 0} \wt{g}_{1,j} y^{j+1},
\]
which implies $\wt{g}_{0,j+1} = \alpha \cdot \wt{g}_{0,j} + (1-\alpha) \cdot \wt{g}_{1,j}$, for all $j \ge 0$, as desired.

\smallskip

\textbf{(iv).} Consider any arbitrary integer $k \ge 2$. By collecting the terms whose $x$-degree is exactly $k$ in ($\wt{\eighthnote}$), we obtain
\[
0 = (q+1)(x-\alpha xy) \sum_{j \ge 0} \wt{g}_{k-1,j} x^{k-1} y^j - q(1-\alpha)y \sum_{j \ge 0} \wt{g}_{k,j} x^k y^j - (1-\alpha) x^2 y \sum_{j \ge 0} \wt{g}_{k-2,j} x^{k-2} y^j.
\]
This gives
\[
\wt{g}_{k-1,0} + \sum_{j \ge 0} \wt{g}_{k-1,j+1} y^{j+1} = \alpha \sum_{j \ge 0} \wt{g}_{k-1,j} y^{j+1} + q \frac{1-\alpha}{q+1} \sum_{j \ge 0} \wt{g}_{k,j} y^{j+1} + \frac{1-\alpha}{q+1} \sum_{j \ge 0} \wt{g}_{k-2,j} y^{j+1}.
\]
Therefore, for any $k \ge 2$ and for any $j \ge 0$, we have
\[
\wt{g}_{k-1,j+1} = \alpha \wt{g}_{k-1,j} + q \frac{1-\alpha}{q+1} \wt{g}_{k,j} + \frac{1-\alpha}{q+1} \wt{g}_{k-2,j},
\]
which is the desired recurrence relation. Furthermore, we also obtain $\wt{g}_{k-1,0} = 0$, for all $k \ge 2$.

\smallskip

\textbf{(i).} In the last part of the previous step, we observed that $\wt{g}_{i,0} = 0$, for all $i > 0$. To show that $\wt{g}_{i,j} = 0$, for all $i > j$, we proceed by induction on $j \ge 0$. The base case of $j = 0$ is completed in the previous step. The inductive step is obtained immediately from the recurrence in (iii) we established earlier.

We have finished the proof.
\end{proof}

In the case of simple random walks, we have the formulas for $G(x,y)$ and $\gamma(y)$ in the following theorem. Note that in the special case of $\alpha = \frac{1}{2}$, Woess \cite[Chapter 19]{Woess} also discovers this formula.

\begin{theorem}\label{thm:SRW-formula}
For simple random walks, we have the following formulas:
\begin{align*}
\gamma(y) &= q \left( \left( \frac{q-1}{2} \right) (1-\alpha y) + \sqrt{\Delta} \right)^{-1}, \\
G(x,y) &= \frac{\left( \frac{q+1}{2} \right) (1-\alpha y) + \sqrt{\Delta}}{\left( \frac{q+1}{2} \right) (1-\alpha y) - (1-\alpha)xy + \sqrt{\Delta}} \cdot q \left( \left( \frac{q-1}{2} \right) (1-\alpha y) + \sqrt{\Delta} \right)^{-1},
\end{align*}
where
\[
\Delta = \left( \frac{q+1}{2} \right)^2 (1 - \alpha y)^2 - q(1-\alpha)^2 y^2.
\]
\end{theorem}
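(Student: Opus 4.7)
By Lemma~\ref{lemma:unique-G-gamma}, the pair $(G,\gamma)$ satisfying the functional equation $(\eighthnote)$ in $\mathbb{R}[\![x,y]\!] \times \mathbb{R}[\![y]\!]$ is unique. So it is enough to construct any solution; the formulas in the statement will then be verified to be that unique solution. The natural strategy is to treat the coefficient of $G(x,y)$ in $(\eighthnote)$, namely
\[
P(x,y) := (q+1)(x-\alpha xy) - q(1-\alpha)y - (1-\alpha)x^2y,
\]
as a quadratic polynomial in $x$ with coefficients in $\mathbb{R}[\![y]\!]$, and to evaluate $(\eighthnote)$ at a root of $P$ in order to force the $G$-term to vanish, leaving a single equation that determines $\gamma$.

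\textbf{Step 1: roots of $P$.} The quadratic formula produces two roots
\[
x_{\pm}(y) \;=\; \frac{\frac{q+1}{2}(1-\alpha y) \pm \sqrt{\Delta}}{(1-\alpha)y},
\]
where $\sqrt{\Delta}$ is the unique formal power series in $y$ with constant term $\frac{q+1}{2}$ (which exists since $\Delta$ has nonzero constant term $\bigl(\tfrac{q+1}{2}\bigr)^2$). A direct series expansion shows that $x_{-}(y)$ is a power series in $y$ starting at $y^1$, while $x_{+}(y)$ has a simple pole at $y=0$. In particular, the composition $\gamma(y) \mapsto \gamma(y)$ evaluated on $x_-(y)$ is legal in the formal ring $\mathbb{R}[\![y]\!]$.

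\textbf{Step 2: solve for $\gamma$.} Writing $Q(x,y) := -(1-\alpha y)x + q(1-\alpha)y$ for the coefficient of $\gamma(y)$ in $(\eighthnote)$, evaluation at $x=x_-(y)$ yields $q\,x_-(y) = Q(x_-(y),y)\,\gamma(y)$. To simplify, multiply $x_-$ by $\tfrac{q-1}{2}(1-\alpha y) + \sqrt{\Delta}$ and expand: with $a=\tfrac{q+1}{2}(1-\alpha y)$ and $b=\tfrac{q-1}{2}(1-\alpha y)$,
\[
(a-\sqrt{\Delta})(b+\sqrt{\Delta}) \;=\; ab + (a-b)\sqrt{\Delta} - \Delta \;=\; -\tfrac{q+1}{2}(1-\alpha y)^2 + q(1-\alpha)^2 y^2 + (1-\alpha y)\sqrt{\Delta},
\]
using $ab - \Delta = \tfrac{q^2-1}{4}(1-\alpha y)^2 - \bigl(\tfrac{q+1}{2}\bigr)^2(1-\alpha y)^2 + q(1-\alpha)^2 y^2 = -\tfrac{q+1}{2}(1-\alpha y)^2 + q(1-\alpha)^2 y^2$. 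Dividing by $(1-\alpha)y$ shows this quantity equals $Q(x_-(y),y)$, so
\[
\gamma(y) \;=\; \frac{q\,x_-(y)}{Q(x_-(y),y)} \;=\; \frac{q}{\frac{q-1}{2}(1-\alpha y) + \sqrt{\Delta}}.
\]

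\textbf{Step 3: solve for $G$.} Substituting back into $(\eighthnote)$, the right-hand side $qx - Q(x,y)\gamma(y)$ is \emph{linear} in $x$ and vanishes at $x=x_-(y)$, so it factors as $[q + (1-\alpha y)\gamma(y)](x - x_-(y))$. Meanwhile $P(x,y) = -(1-\alpha)y\,(x-x_+)(x-x_-)$. Dividing,
\[
G(x,y) \;=\; \frac{q + (1-\alpha y)\gamma(y)}{(1-\alpha)y\,\bigl(x_+(y) - x\bigr)}.
\]
Now $(1-\alpha)y\,x_+(y) = \tfrac{q+1}{2}(1-\alpha y) + \sqrt{\Delta}$, and combining the $\gamma$-formula gives $q + (1-\alpha y)\gamma(y) = q\bigl(\tfrac{q+1}{2}(1-\alpha y) + \sqrt{\Delta}\bigr)\big/\bigl(\tfrac{q-1}{2}(1-\alpha y)+\sqrt{\Delta}\bigr)$. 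Assembling these yields precisely the claimed formula for $G(x,y)$. Well-definedness as formal power series is immediate: the denominators evaluated at $(x,y)=(0,0)$ equal $\tfrac{q-1}{2}+\tfrac{q+1}{2}=q$ and $\tfrac{q+1}{2}+\tfrac{q+1}{2}=q+1$, both nonzero.

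The main obstacle is purely algebraic: the simplification in Step 2 hinges on the identity relating $\sqrt{\Delta}$ to the two factors $\tfrac{q\pm1}{2}(1-\alpha y)$, which is the reason for the ``nice'' closed form. Everything else is bookkeeping once one recognizes that evaluating at the power-series root $x_-(y)$ isolates $\gamma$.
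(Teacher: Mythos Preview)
Your argument is correct and rests on the same foundation as the paper's proof: both invoke the uniqueness Lemma~\ref{lemma:unique-G-gamma}. The paper then simply asserts that ``plugging in directly'' verifies $(\eighthnote)$, whereas you go further and \emph{derive} the formulas by the kernel method---evaluating $(\eighthnote)$ at the power-series root $x_-(y)$ of the quadratic coefficient $P(x,y)$ to isolate $\gamma$, then factoring to recover $G$. Your route is longer but more explanatory: it shows where the closed forms come from rather than presenting them as a \emph{fait accompli}, and it is the standard technique one would use to discover such formulas in the first place. The algebraic identities in Steps~2 and~3 check out (in particular the factorization $qx - Q(x,y)\gamma(y) = [q + (1-\alpha y)\gamma(y)](x - x_-)$ and the cancellation against $P(x,y) = -(1-\alpha)y(x-x_-)(x-x_+)$), and the final remark that the denominators are units at $(0,0)$ confirms that the result lies in $\mathbb{R}[\![x,y]\!]$. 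One small wording slip: the phrase ``the composition $\gamma(y) \mapsto \gamma(y)$ evaluated on $x_-(y)$'' is garbled---you mean that the substitution $x \mapsto x_-(y)$ into $G(x,y)$ (and hence into $(\eighthnote)$) is legal because $x_-(y)$ has positive $y$-order.
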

\begin{proof}
By Lemma \ref{lemma:unique-G-gamma}, it suffices to show that the described generating functions $G(x,y)$ and $\gamma(y)$ satisfy Equation (\eighthnote). This task can be done by plugging in directly.
\end{proof}

By Corollary \ref{cor:gamma-small-good-asymp}, in order to find the linear asymptotic formula for $W_1\!\left( \fm_X^n, \fm_Y^n \right)$, it suffices to find those for $[y^n]G_1\!(q,y)$ and $[y^n]G(q,y)$. This task is straightforward, albeit rather tedious. We will describe the details of computation briefly.

Let's write

\begin{align*}
& \frac{q}{\gamma} = \phi_1(y), \\
& \frac{G}{\gamma} = \frac{\phi_2(y)}{\phi_3(x,y)},
\end{align*}
where
\begin{align*}
\phi_1(y) &= \left( \frac{q-1}{2} \right)(1-\alpha y) + \sqrt{\Delta} \\
\phi_2(y) &= \left( \frac{q+1}{2} \right)(1-\alpha y) + \sqrt{\Delta} \\
\phi_3(x,y) &= \left( \frac{q+1}{2} \right)(1-\alpha y) - (1-\alpha)xy + \sqrt{\Delta}.
\end{align*}

Since $G_1 = \frac{\partial G}{\partial x}$, we obtain
\[
G_1\!(x,y) = \frac{(1-\alpha)y \cdot G(x,y)}{\phi_3(x,y)}.
\]

Let's also define
\[
\ol{\phi}_3(x,y) = \left( \frac{q+1}{2} \right)(1-\alpha y) - (1-\alpha)xy - \sqrt{\Delta}.
\]
Note that we have the following identity
\[
\phi_3(q,y) \cdot \ol{\phi}_3(q,y) = (1-\alpha)q(q+1)y(y-1).
\]

Define the formal power series $H_1\!(y) := G_1\!(q,y) \cdot (1-y)^2 \in \mathbb{R}[\![y]\!]$. Note that we can express
\[
H_1\!(y) = \frac{1}{(1-\alpha)q(q+1)^2} \cdot \frac{\phi_2(y) \cdot \ol{\phi}_3(q,y)^2}{y \cdot \phi_1(y)}.
\]

It is routine to show that there exists $\varepsilon > 0$ for which $H_1\!(y)$ is a holomorphic function on the open disk of radius $1+\varepsilon$ centered at $0$. Therefore, by Lemma \ref{l:double-pole}, we can write
\[
[y^n]G_1\!(q,y) = H_1\!(1) \cdot n + \left(H_1\!(1) - H'_1\!(1) \right) + o(1),
\]
as $n \to \infty$.

Direct computations show that the formulas for $H_1\!(1)$ and $H'_1\!(1)$ are quite nice:
\[
H_1\!(1) = \frac{(1-\alpha)(q-1)}{(q+1)^2}
\]
and
\[
H'_1\!(1) = \frac{(1-\alpha)(q-1)}{(q+1)^2} - \frac{2q}{(q-1)(q+1)^2}.
\]
Thus, we have proved the following proposition.

\begin{proposition}\label{prop:SRW_G1}
For simple random walks, we have
\[
[y^n] G_1\!(q,y) = \frac{(1-\alpha)(q-1)}{(q+1)^2} \cdot n + \frac{2q}{(q-1)(q+1)^2} + o(1),
\]
as $n \to \infty$.
\end{proposition}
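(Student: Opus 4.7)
The plan is to apply Lemma~\ref{l:double-pole} to the auxiliary function $H_1(y) := G_1(q,y)(1-y)^2$, whose closed-form expression
\[
H_1(y) = \frac{1}{(1-\alpha)q(q+1)^2}\cdot\frac{\phi_2(y)\,\ol{\phi}_3(q,y)^2}{y\,\phi_1(y)}
\]
has already been derived in the preceding paragraphs. The conclusion of that lemma gives $[y^n]G_1(q,y) = H_1(1)\cdot n + \bigl(H_1(1) - H_1'(1)\bigr) + o(1)$, so the proposition reduces to two tasks: verifying the lemma's holomorphicity hypothesis for $H_1$, and computing $H_1(1)$ and $H_1'(1)$ in closed form.

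The holomorphicity check is where the main work lies, and I expect it to be the principal obstacle, since $H_1$ is built from a branch of $\sqrt{\Delta}$ together with reciprocals of $y$ and $\phi_1(y)$, each of which could in principle introduce a singularity inside the relevant disk. I would verify holomorphicity in three steps. First, the zeros of $\Delta$ are the solutions of $(q+1)(1-\alpha y) = \pm 2\sqrt{q}(1-\alpha)y$; since $q+1 > 2\sqrt{q}$ for $q\ge 2$ and $\alpha\in[0,1)$, both roots have modulus strictly greater than $1$, so a single-valued analytic branch of $\sqrt{\Delta}$ exists on some open disk of radius $1+\varepsilon$ about the origin. Second, the apparent singularity at $y=0$ is removable, because $\ol{\phi}_3(q,0) = (q+1)/2 - \sqrt{\Delta(0)} = 0$, so the factor of $y$ in the denominator is cancelled (in fact twice over) by $\ol{\phi}_3(q,y)^2$. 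Third, $\phi_1$ is nonvanishing on the chosen disk: squaring the equation $\phi_1(y) = 0$ reduces to $(1-\alpha)^2 y^2 = (1-\alpha y)^2$, whose only potential solution in $\{|y|\le 1\}$ is $y=1$, and a direct check shows $\phi_1(1) = (1-\alpha)(q-1) \neq 0$.

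The coefficient computation is then a short evaluation. Substituting $\sqrt{\Delta(1)} = (1-\alpha)(q-1)/2$ gives $\phi_1(1) = (1-\alpha)(q-1)$, $\phi_2(1) = (1-\alpha)q$, and $\ol{\phi}_3(q,1) = -(1-\alpha)(q-1)$, and plugging these values into the closed form immediately yields $H_1(1) = (1-\alpha)(q-1)/(q+1)^2$. For $H_1'(1)$, logarithmic differentiation of the product form of $H_1$, combined with the explicit linear expressions for $\phi_i'(1)$ and $\Delta'(1)$ in terms of $\alpha$ and $q$, produces $H_1'(1) = H_1(1) - 2q/((q-1)(q+1)^2)$; consequently $H_1(1) - H_1'(1) = 2q/((q-1)(q+1)^2)$, which is precisely the constant coefficient asserted in the proposition.
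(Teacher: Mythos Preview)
Your proposal is correct and follows essentially the same approach as the paper: the paper likewise applies Lemma~\ref{l:double-pole} to $H_1(y)=(1-y)^2G_1(q,y)$ using the same closed form, declares the holomorphicity check ``routine,'' and records the values of $H_1(1)$ and $H_1'(1)$ as the result of ``direct computations.'' Your write-up is in fact more detailed than the paper's on both points---the three-step singularity analysis and the use of logarithmic differentiation for $H_1'(1)$ are exactly the kind of verification the paper leaves to the reader.
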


Next, we compute the asymptotic formula for $[y^n]G(q,y)$. While $G_1(q,y)$ has a double pole at $1$, the meromorphic function $G(q,y)$ only has a simple pole at $1$. Define the formal power series $H(y) := (1-y) \cdot G(q,y) \in \mathbb{R}[\![y]\!]$. We have
\[
H(y) = - \frac{1}{(1-\alpha)(q+1)y} \cdot \frac{\phi_2(y) \cdot \ol{\phi}_3(q,y)}{\phi_1(y)}.
\]
Direct computations give
\[
H(1) = \frac{q}{q+1}.
\]
Thus, by applying Lemma \ref{l:simple-pole}, we obtain the following proposition.

\begin{proposition}\label{prop:SRW_G}
For simple random walks, we have
\[
[y^n] G(q,y) = \frac{q}{q+1} + o(1),
\]
as $n \to \infty$.
\end{proposition}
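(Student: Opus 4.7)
The plan is to apply Lemma \ref{l:simple-pole} to the power series $H(y) := (1-y) \cdot G(q,y)$, so what I need to do is (i) exhibit $H(y)$ as a holomorphic function on a disk of radius $1+\varepsilon$ for some $\varepsilon>0$, and (ii) evaluate $H(1)$ to be $q/(q+1)$.

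First, I would use the identity $\phi_3(q,y)\cdot\ol{\phi}_3(q,y) = (1-\alpha)q(q+1)y(y-1)$ (recorded just before the proposition) to clear the $\phi_3(q,y)$ denominator by multiplying numerator and denominator by $\ol{\phi}_3(q,y)$. Starting from $G(q,y) = \frac{\phi_2(y)}{\phi_3(q,y)}\cdot\frac{q}{\phi_1(y)}$ from Theorem \ref{thm:SRW-formula}, this gives
\[
G(q,y) \;=\; \frac{\phi_2(y)\,\ol{\phi}_3(q,y)}{(1-\alpha)(q+1)\,y\,(y-1)\,\phi_1(y)},
\]
so that $H(y) = -\frac{1}{(1-\alpha)(q+1)y}\cdot\frac{\phi_2(y)\,\ol{\phi}_3(q,y)}{\phi_1(y)}$, as displayed in the excerpt.

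Next, I would verify the holomorphicity of $H$ on a slightly enlarged disk. The apparent pole at $y=0$ is removable because $\ol{\phi}_3(q,0) = \tfrac{q+1}{2} - \tfrac{q+1}{2} = 0$, so the factor of $y$ in the denominator cancels. The branch of $\sqrt{\Delta}$ needed to define $\phi_1,\phi_2,\ol{\phi}_3$ extends holomorphically to any simply connected region avoiding the zeros of the quadratic $\Delta(y)$; a quick computation (factoring $\Delta$ as a difference of squares) shows that for $q\ge 2$ and $\alpha\in[0,1)$ its two real roots both have absolute value strictly greater than $1$, using $(q+1)/2 > \sqrt{q}$. Finally one checks that $\phi_1$ does not vanish on the closed unit disk: a squaring argument reduces $\phi_1(y)=0$ to $(1-\alpha y)^2 = (1-\alpha)^2 y^2$, whose only candidates are $y=1$ and $y=1/(2\alpha-1)$, and at $y=1$ we get $\phi_1(1)=(q-1)(1-\alpha)\ne 0$, while $|1/(2\alpha-1)|>1$ for $\alpha\in[0,1)$. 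Hence there is $\varepsilon>0$ so that $H$ is holomorphic on the open disk of radius $1+\varepsilon$.

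For the evaluation at $y=1$, I would compute
\[
\Delta(1) = \left(\tfrac{q+1}{2}\right)^2(1-\alpha)^2 - q(1-\alpha)^2 = (1-\alpha)^2\left(\tfrac{q-1}{2}\right)^2,
\]
so $\sqrt{\Delta(1)} = \tfrac{q-1}{2}(1-\alpha)$, giving $\phi_1(1) = (q-1)(1-\alpha)$, $\phi_2(1) = q(1-\alpha)$, and $\ol{\phi}_3(q,1) = -(q-1)(1-\alpha)$. Substituting yields
\[
H(1) \;=\; -\,\frac{q(1-\alpha)\cdot\bigl(-(q-1)(1-\alpha)\bigr)}{(1-\alpha)(q+1)\cdot(q-1)(1-\alpha)} \;=\; \frac{q}{q+1}.
\]
The main step that requires genuine care is the holomorphicity claim: one has to show simultaneously that the branch cut of $\sqrt{\Delta}$ stays outside $\{|y|\le 1+\varepsilon\}$ and that $\phi_1$ has no zeros there. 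With that in hand, Lemma \ref{l:simple-pole} applies directly and gives $[y^n]G(q,y) = H(1) + o(1) = \frac{q}{q+1} + o(1)$, as claimed.
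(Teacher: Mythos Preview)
Your proof is correct and follows exactly the paper's approach: define $H(y)=(1-y)G(q,y)$, rewrite it via the identity $\phi_3\ol{\phi}_3=(1-\alpha)q(q+1)y(y-1)$, check holomorphicity on a disk of radius $1+\varepsilon$, compute $H(1)=q/(q+1)$, and invoke Lemma~\ref{l:simple-pole}. One tiny slip: the inequality $|1/(2\alpha-1)|>1$ fails at $\alpha=0$ (it equals $1$), but this is harmless since $\phi_1(-1)=q-1\neq 0$, so $y=-1$ is a spurious root from squaring and the holomorphicity claim stands.
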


Combining Propositions \ref{prop:SRW_G1} and \ref{prop:SRW_G}, together with Corollary \ref{cor:gamma-small-good-asymp}, we obtain the following main result.
\begin{theorem}\label{thm:SRW_An+B}
For simple random walks, we have
\[
W_1(\fm_X^n, \fm_Y^n) = A^{\SRW}_{\alpha, d, q} \cdot n + B^{\SRW}_{\alpha, d, q} + o(1),
\]
as $n \to \infty$, where
\[
A^{\SRW}_{\alpha, d, q} = 2(1-\alpha)(q+1 - q^{1-\delta'} - q^{-\delta}) \cdot \frac{q-1}{(q+1)^2},
\]
and
\[
B^{\SRW}_{\alpha, d, q} = d + \frac{2(\delta q^{-\delta} + \delta' q^{1-\delta'})}{q+1} + \frac{2(q^{1-\delta} - q^{1-\delta'})}{(q+1)^2}.
\]
Recall that $\delta=\left\lfloor d/2 \right\rfloor$ and $\delta'=\left\lceil d/2 \right\rceil$.
\end{theorem}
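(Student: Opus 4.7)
The plan is to apply Corollary \ref{cor:gamma-small-good-asymp} to the simple random walk family, then substitute the asymptotic formulas from Propositions \ref{prop:SRW_G1} and \ref{prop:SRW_G} to identify the coefficients $A^{\SRW}_{\alpha,d,q}$ and $B^{\SRW}_{\alpha,d,q}$ after an algebraic simplification. Since Propositions \ref{prop:SRW_G1} and \ref{prop:SRW_G} are already stated in the subsection, essentially the only new piece of work is verifying the hypothesis of Corollary \ref{cor:gamma-small-good-asymp} and performing the bookkeeping.

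First I would verify that $\bigl| [y^n] \gamma_i(y) \bigr| \to 0$ for each $0 \le i \le d-1$, so that Corollary \ref{cor:gamma-small-good-asymp} applies. The closed form from Theorem \ref{thm:SRW-formula} exhibits $G(x,y)$ as an algebraic function whose only singularities in $y$ (for $x$ in a small neighborhood of $0$) come from the vanishing of $\Delta$ or of the denominator $\phi_1(y)$. The nearest singularity of $\sqrt{\Delta}$ lies at $y^{\ast} = \frac{q+1}{\alpha(q+1) + 2\sqrt{q}(1-\alpha)}$, and $y^{\ast} > 1$ reduces to $2\sqrt{q} < q+1$, i.e.\ $(\sqrt{q}-1)^2 > 0$, which holds since $q \ge 2$. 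A short check shows $\phi_1$ has no zero in a slightly enlarged disk around $0$. Each $\gamma_i(y)$ is obtained by differentiating $G(x,y)$ in $x$ and specializing to $x = 0$, hence is holomorphic on an open disk of radius $1+\varepsilon$ for some $\varepsilon > 0$. Corollary \ref{cor:expo-decay} then yields the required (in fact, exponential) decay.

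Second, I would substitute Propositions \ref{prop:SRW_G1} and \ref{prop:SRW_G} into the formula of Corollary \ref{cor:gamma-small-good-asymp}. The linear-in-$n$ contribution comes only from $[y^n] G_1(q,y)$, paired with the bracket $\{2q+2-2q^{1-\delta'}-2q^{-\delta}\}$, and after factoring out $2(1-\alpha)\frac{q-1}{(q+1)^2}$ this gives exactly the stated $A^{\SRW}_{\alpha,d,q}$. The constant $B^{\SRW}_{\alpha,d,q}$ is assembled from the product of $\frac{2q}{(q-1)(q+1)^2}$ with the first bracket of Corollary \ref{cor:gamma-small-good-asymp} together with the product of $\frac{q}{q+1}$ with the second (much longer) bracket.

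The main obstacle is this last simplification. One must check that the two $\frac{4q}{(q-1)(q+1)}$ contributions of opposite sign cancel (leaving the $d$ term from $\frac{q}{q+1} \cdot \frac{d(q+1)}{q}$), that the $\frac{2q\delta' q^{-\delta'}}{q+1}$ and $\frac{2\delta q^{-\delta}}{q+1}$ pieces survive as the $\frac{2(\delta' q^{1-\delta'} + \delta q^{-\delta})}{q+1}$ term, and that the remaining $q^{-\delta}, q^{-\delta'}, q^{1-\delta}, q^{1-\delta'}$ fragments, after a factor of $(q-1)$ is pulled out of a carefully grouped combination, collapse to $\frac{2(q^{1-\delta} - q^{1-\delta'})}{(q+1)^2}$. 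This is purely symbolic, but delicate enough to warrant explicit verification.
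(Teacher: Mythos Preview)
Your proposal is correct and follows essentially the same route as the paper: combine Corollary~\ref{cor:gamma-small-good-asymp} with Propositions~\ref{prop:SRW_G1} and~\ref{prop:SRW_G} and simplify. The only minor difference is that the paper justifies the vanishing of the $\gamma_i$ terms by appealing to transience of the random walk (a Markov chain fact), whereas you verify it analytically via the singularity location of the closed form in Theorem~\ref{thm:SRW-formula} and Cauchy's bound; both arguments are valid and yield the same conclusion.
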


Curiously, we note that the coefficient $B^{\SRW}_{\alpha, d, q}$ does not depend on the parameter $\alpha$. This phenomenon where the coefficient of the second-order term does not depend on some other parameter seems ubiquitous in algebra and combinatorics. One elementary, yet elegant, example is that of Faulhaber's formula (also known as Bernoulli's formula). For any positive integer $p$, it is rather well-known that the sum $1^p + 2^p + \cdots + n^p$ is asymptotically $\frac{1}{p+1} \cdot n^{p+1}$. Equivalently, $\sum_{i=1}^n (i/n)^p \sim \frac{1}{p+1} \cdot n$, as $n \to \infty$. What is perhaps less well-known is the second coefficient. It turns out that $\sum_{i=1}^n (i/n)^p = \frac{1}{p+1} \cdot n + \frac{1}{2} + o(1)$, as $n \to \infty$. The coefficient $1/2$ is indeed independent of the parameter $p$.

We also note that Theorem \ref{thm:SRW_An+B} is particularly nice when $d$ is an even positive integer.

\begin{corollary}
Suppose $d = \dist(X,Y)$ is even. Then, for simple random walks,
\[
W_1(\fm_X^n, \fm_Y^n) = 2(1-\alpha)(1-q^{-d/2}) \frac{q-1}{q+1} \cdot n + d(1+q^{-d/2}) + o(1),
\]
as $n \to \infty$.
\end{corollary}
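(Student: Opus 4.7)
The plan is to deduce this corollary as a direct specialization of Theorem \ref{thm:SRW_An+B} to the case where $d$ is even. First I would substitute the assumption that $d = 2m$ for some positive integer $m$, which forces $\delta = \lfloor d/2 \rfloor = \lceil d/2 \rceil = \delta' = d/2$. With this equality in hand, the computation reduces to simplifying the two displayed expressions for $A^{\SRW}_{\alpha,d,q}$ and $B^{\SRW}_{\alpha,d,q}$.

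For the leading coefficient, the plan is to observe that
\[
q+1 - q^{1-\delta'} - q^{-\delta} \;=\; q+1 - q \cdot q^{-d/2} - q^{-d/2} \;=\; (q+1)(1 - q^{-d/2}),
\]
so that one factor of $q+1$ cancels with one factor from the denominator $(q+1)^2$ in the formula for $A^{\SRW}_{\alpha,d,q}$, yielding the desired form $2(1-\alpha)(1-q^{-d/2}) \frac{q-1}{q+1}$.

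For the constant term, I would note first that the last summand of $B^{\SRW}_{\alpha,d,q}$ vanishes, since $q^{1-\delta} - q^{1-\delta'} = 0$ when $\delta = \delta'$. The remaining middle summand simplifies via
\[
\delta q^{-\delta} + \delta' q^{1-\delta'} \;=\; \tfrac{d}{2} q^{-d/2}(1 + q) \;=\; \tfrac{d}{2}(q+1)q^{-d/2},
\]
so that $\frac{2(\delta q^{-\delta} + \delta' q^{1-\delta'})}{q+1} = d \, q^{-d/2}$, and hence $B^{\SRW}_{\alpha,d,q} = d + d q^{-d/2} = d(1 + q^{-d/2})$, as required.

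There is no genuine obstacle here; the argument is a short algebraic verification once the even-$d$ hypothesis is used to collapse $\delta$ and $\delta'$. The only care needed is making sure each substitution $q^{1-\delta'} = q \cdot q^{-d/2}$ and $q^{-\delta} = q^{-d/2}$ is tracked correctly through both formulas, and that the common factor $q+1$ is identified and cancelled cleanly in each of the two coefficients.
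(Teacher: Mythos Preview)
Your proposal is correct and is exactly the intended argument: the paper states this corollary without proof as an immediate specialization of Theorem~\ref{thm:SRW_An+B}, and the substitution $\delta=\delta'=d/2$ followed by the cancellation of the common factor $q+1$ that you describe is precisely what is required.
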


We have obtained the desired linear asymptotic formula for the simple random walk case. In the next subsection, we turn to the sphere case.

We remark that our analysis above computes the asymptotic formulas for $[y^n]G_1\!(q,y)$ and $[y^n]G(q,y)$, but we did not need the formula for $[y^n]\gamma(y)$ in order to obtain Theorem \ref{thm:SRW_An+B}. It turns out that the asymptotics for $\gamma(y)$ is also nice, even though its derivation is rather involved. For interested readers, we present our investigation of $[y^n]\gamma(y)$ in Appendix \ref{sec:gamma}.

\bigskip

\subsection{Linear Asymptotics for Spheres} \label{subsec:LA-sph}
A {\em sphere} of radius $r \in \mathbb{Z}_{\ge 0}$ centered at a vertex $u$ in $\cG$ is the set of all vertices $v$ whose distances from $u$ are exactly $r$. The number of vertices in a sphere of radius $r$ is $(q+1)q^{r-1}$ if $r \ge 1$, and is $1$ if $r = 0$. In this subsection, we consider the case when $\{\mu_u^n\}_{u \in V(G), n \in \mathbb{Z}_{\ge 0}}$ are uniform measures on spheres. More precisely, we consider $\mu_u^n = \sigma_u^n$, where $\sigma_u^n$ is given by
\begin{itemize}
\item for any vertex $u$, we have $\sigma_u^0(u) = 1$,
\item for any vertices $u, v$ with $u \neq v$, we have $\sigma_u^0(v) = 0$,
\item for any $r \in \mathbb{Z}_{\ge 1}$, and for any vertices $u, v$ with $\dist(u,v) = r \ge 1$, we have $\sigma_u^r(v) = (q+1)^{-1}q^{1-r}$, and
\item for any $r \in \mathbb{Z}_{\ge 1}$, and for any vertices $u, v$ with $\dist(u,v) \neq r$, we have $\sigma_u^r(v) = 0$.
\end{itemize}
Observe that the distribution $\sigma_u^n$ is the probability mass function of the discrete uniform distribution on the sphere of radius $n$ centered at $u$.

In this subsection, our goal is to use Corollary \ref{cor:gamma-small-good-asymp} to obtain the asymptotic formula for $W_1 \! \left( \sigma_X^n, \sigma_Y^n \right)$ of the form
\[
W_1 \! \left( \sigma_X^n, \sigma_Y^n \right) = A_{d, q}^{\sph} \cdot n + B_{d, q}^{\sph} + o(1),
\]
as $n \to \infty$.

Computations are easier in the case of spheres than in the case of simple random walks in the previous subsection. For spheres, we do not need to solve bivariate linear recurrences to obtain the closed form of generating functions. We can write down the formula for $G(x,y)$ and $G_1(x,y)$ immediately.

Note that for spheres,
\[
g(\ell, n) = \begin{cases}
1 & \text{ if } \ell = n = 0, \\
(q+1)^{-1}q^{1-n} & \text{ if } \ell = n \ge 1, \\
0 & \text{ otherwise.}
\end{cases}
\]
This implies
\[
G(x,y) = \frac{q^2+q-xy}{(q+1)(q-xy)}
\]
and
\[
G_1\!(x,y) = \frac{q^2y}{(q+1)(q-xy)^2}.
\]
Therefore,
\[
[y^n] G(q,y) = \begin{cases}
1 & \text{ if } n = 0, \\
\frac{q}{q+1} & \text{ if } n \ge 1.
\end{cases}
\]
We also have
\[
[y^n] G_1\!(q,y) = \frac{n}{q+1},
\]
for all $n \ge 0$.

Using Corollary \ref{cor:gamma-small-good-asymp} once again, we have proved the following.
\begin{theorem}\label{thm:sph_An+B}
For spheres, we have
\[
W_1(\sigma_X^n, \sigma_Y^n) = A^{\sph}_{d, q} \cdot n + B^{\sph}_{d, q} + o(1),
\]
as $n \to \infty$, where
\[
A^{\sph}_{d,q} = \frac{2(q+1-q^{1-\delta'} - q^{-\delta})}{q+1},
\]
and
\[
B^{\sph}_{d,q} = d + \frac{-4q+2(\delta'(q-1)+1)q^{1-\delta'} + 2(\delta(q-1)+q) q^{-\delta}}{q^2-1}.
\]
\end{theorem}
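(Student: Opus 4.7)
The plan is to invoke Corollary \ref{cor:gamma-small-good-asymp} with the explicit data $\mu_u^n = \sigma_u^n$ and then simplify. All the required ingredients are already assembled in the paragraphs preceding the statement, so the proof is essentially an assembly and an algebraic cleanup.

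First, I would verify that the hypothesis of Corollary \ref{cor:gamma-small-good-asymp} holds. Because $\sigma_u^n$ is supported exactly on the sphere of radius $n$ around $u$, we have $g(\ell,n)=0$ unless $\ell=n$. Consequently each auxiliary series $\gamma_i(y)=\sum_{n\ge 0}g(i,n)y^n$ is a monomial (either $0$ or $g(i,i)y^i$), so $[y^n]\gamma_i(y)=0$ for all $n>i$. In particular the decay condition $[y^n]\gamma_i(y)\to 0$ is satisfied trivially for $i=0,1,\ldots,d-1$, and in fact the $\gamma_i$-contribution and the $o(1)$ term will both vanish for $n$ large enough.

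Next I would plug in the two closed forms already derived, namely
\[
[y^n]G(q,y)=\frac{q}{q+1}\quad(n\ge 1),\qquad [y^n]G_1\!(q,y)=\frac{n}{q+1}\quad(n\ge 0),
\]
into the formula of Corollary \ref{cor:gamma-small-good-asymp}. Reading off the coefficient of $n$ gives
\[
A^{\sph}_{d,q}=\frac{2q+2-2q^{1-\delta'}-2q^{-\delta}}{q+1}=\frac{2(q+1-q^{1-\delta'}-q^{-\delta})}{q+1},
\]
which matches the claimed formula immediately.

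The only step requiring care is the constant coefficient. Multiplying the bracketed factor in front of $[y^n]G(q,y)$ by $q/(q+1)$ yields
\[
B^{\sph}_{d,q}=\frac{q}{q+1}\Bigl\{d+\tfrac{d}{q}-\tfrac{4}{q-1}+2\delta'q^{-\delta'}+\tfrac{2q^{-\delta'}}{q-1}+2\delta q^{-1-\delta}+\tfrac{2q^{-\delta}}{q-1}\Bigr\}.
\]
I would then distribute $q/(q+1)$, notice that the leading two terms collapse to $d$, and place everything else over the common denominator $q^2-1=(q+1)(q-1)$. The terms $\tfrac{2\delta'q^{1-\delta'}}{q+1}$ and $\tfrac{2q^{1-\delta'}}{q^2-1}$ combine to $\tfrac{2(\delta'(q-1)+1)q^{1-\delta'}}{q^2-1}$, and analogously $\tfrac{2\delta q^{-\delta}}{q+1}$ and $\tfrac{2q^{1-\delta}}{q^2-1}$ combine to $\tfrac{2(\delta(q-1)+q)q^{-\delta}}{q^2-1}$, while the lone $-\tfrac{4q}{q^2-1}$ stays. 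This produces exactly the stated closed form for $B^{\sph}_{d,q}$. The main (minor) obstacle is simply bookkeeping these denominators; no analytic input beyond Corollary \ref{cor:gamma-small-good-asymp} is needed, because the sphere setting avoids the generating-function singularity analysis required in the random walk case.
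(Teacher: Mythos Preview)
Your proposal is correct and follows essentially the same approach as the paper: verify that the $\gamma_i$ terms vanish for large $n$, feed the precomputed values $[y^n]G(q,y)=q/(q+1)$ and $[y^n]G_1(q,y)=n/(q+1)$ into Corollary~\ref{cor:gamma-small-good-asymp}, and simplify. If anything, you supply more of the algebraic bookkeeping for $B^{\sph}_{d,q}$ than the paper does, and your observation that the $o(1)$ is eventually exactly zero matches the paper's remark after the theorem.
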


In fact, observe that we actually have the {\em exact} formula for the sphere case:
\[
W = A^{\sph}_{d, q} \cdot n + B^{\sph}_{d, q},
\]
for all $n$ sufficiently large. This is because the error term $o(1)$ comes from the $\gamma_i$ terms. For spheres, these $\gamma_i$ are eventually zero when $n$ is sufficiently large.

We turn to the case of uniform balls next.

\bigskip

\subsection{Linear Asymptotics for Balls} \label{subsec:LA-balls}
A {\em ball} of radius $r \in \mathbb{Z}_{\ge 0}$ centered at vertex $u$ in $\cG$ is the set of all vertices $v$ such that $\dist(u,v) \le r$. The number of vertices in a ball of radius $r$ is $\frac{q^{r+1}+q^r-2}{q-1}$, for all $r \ge 0$. This subsection considers the case when $\{\mu_u^n\}_{u \in V(\cG), n \in \mathbb{Z}_{\ge 0}}$ is a family of uniform measures on balls. More precisely, we let $\mu_u^n = \beta_u^n$, where
\begin{itemize}
\item for any $r \in \mathbb{Z}_{\ge 0}$, and for any vertices $u, v$ such that $\dist(u,v) \le r$, we have
\[
\beta_u^r(v) = \frac{q-1}{q^{r+1}+q^r-2},
\]
and
\item for any $r \in \mathbb{Z}_{\ge 0}$, and for any vertices $u, v$ such that $\dist(u,v) > r$, we have $\beta_u^r(v) = 0$.
\end{itemize}

Like the sphere case, we do not need to solve linear recurrences. Note that
\[
g(\ell, n) = \frac{q-1}{q^{n+1}+q^n - 2},
\]
for all $0 \le \ell \le n$, and $g(\ell, n) = 0$ otherwise. This gives
\[
G(x,y) = \sum_{n=0}^{\infty} \frac{q-1}{q^{n+1}+q^n-2} \left( \frac{x^{n+1}-1}{x-1} \right) y^n,
\]
and
\[
G_1\!(x,y) = \sum_{n=0}^{\infty} \frac{q-1}{q^{n+1}+q^n-2} \frac{nx^{n+1} - (n+1)x^n+1}{(x-1)^2} y^n.
\]
For $[y^n]G(q,y)$ and $[y^n]G_1\!(q,y)$, we have the following asymptotic formulas:
\[
[y^n]G(q,y) = \frac{q}{q+1} + o(1),
\]
and
\[
[y^n]G_1\!(q,y) = \frac{n}{q+1} - \frac{1}{q^2-1} + o(1),
\]
as $n \to \infty$.

Using Corollary \ref{cor:gamma-small-good-asymp}, we obtain the following theorem. 
\begin{theorem}\label{thm:ball_An+B}
For uniform measures on balls, we have
\[
W_1(\beta_X^n, \beta_Y^n) = A^{\ball}_{d, q} \cdot n + B^{\ball}_{d,q} + o(1),
\]
as $n \to \infty$, where
\[
A^{\ball}_{d,q} = \frac{2(q+1-q^{1-\delta'}-q^{-\delta})}{q+1},
\]
and
\[
B^{\ball}_{d,q} = d + \frac{-6q-2+2(\delta'(q-1)+2)q^{1-\delta'} + 2(\delta(q-1)+q+1)q^{-\delta}}{q^2-1}.
\]
\end{theorem}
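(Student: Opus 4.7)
The plan is to invoke Corollary \ref{cor:gamma-small-good-asymp} with the specific generating functions associated to uniform ball measures, so the first task is to verify its hypothesis. For balls, $\gamma_i(y) = \sum_{n \ge i} \frac{q-1}{q^{n+1}+q^n-2}\, y^n$, whose coefficient of $y^n$ is bounded above by $C q^{-n}$ for some constant $C = C(q)$. In particular $|[y^n]\gamma_i(y)| \to 0$ as $n \to \infty$ for each fixed $i \in \{0, 1, \ldots, d-1\}$, and the hypothesis of Corollary \ref{cor:gamma-small-good-asymp} is satisfied.

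Next I would extract explicit asymptotics for the two generating-function coefficients that appear in that corollary, using the formulas for $G(x,y)$ and $G_1(x,y)$ already recorded in this subsection. Setting $x = q$ in $G(x,y) = \sum_{n \ge 0} \frac{q-1}{q^{n+1}+q^n-2} \cdot \frac{x^{n+1}-1}{x-1} y^n$ yields
\[
[y^n] G(q,y) = \frac{q^{n+1}-1}{q^{n+1}+q^n-2} = \frac{q}{q+1} + O(q^{-n}),
\]
and setting $x = q$ in the analogous formula for $G_1(x,y)$ gives
\[
[y^n] G_1(q,y) = \frac{nq^{n+1}-(n+1)q^n+1}{(q-1)(q^{n+1}+q^n-2)}.
\]
Dividing numerator and denominator by $q^n$ and expanding,
\[
[y^n] G_1(q,y) = \frac{n(q-1)-1 + q^{-n}}{(q^2-1) - 2(q-1)q^{-n}} = \frac{n}{q+1} - \frac{1}{q^2-1} + o(1),
\]
as $n \to \infty$. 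These are the asymptotic formulas for $[y^n]G(q,y)$ and $[y^n]G_1(q,y)$ stated (without derivation) in the subsection.

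The final step is to substitute these into the expression from Corollary \ref{cor:gamma-small-good-asymp}. The coefficient of $n$ comes only from $[y^n]G_1(q,y)$ and gives
\[
A^{\ball}_{d,q} = \bigl(2q+2-2q^{1-\delta'}-2q^{-\delta}\bigr) \cdot \frac{1}{q+1} = \frac{2(q+1-q^{1-\delta'}-q^{-\delta})}{q+1},
\]
matching the stated formula and in particular agreeing with $A^{\sph}_{d,q}$. The constant $B^{\ball}_{d,q}$ is obtained by adding the contribution $-\frac{1}{q^2-1}\bigl(2q+2-2q^{1-\delta'}-2q^{-\delta}\bigr)$ from $[y^n]G_1(q,y)$ to $\frac{q}{q+1}$ times the bracketed constant-term coefficient in Corollary \ref{cor:gamma-small-good-asymp}. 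Grouping terms by the $q$-independent part, the $q^{1-\delta'}$ part, and the $q^{-\delta}$ part, and using $q/(q+1) = (q^2-q)/(q^2-1)$ and $1/(q-1) = (q+1)/(q^2-1)$ to put everything over the common denominator $q^2-1$, collapses the sum into the claimed form with numerator $-6q-2+2(\delta'(q-1)+2)q^{1-\delta'} + 2(\delta(q-1)+q+1)q^{-\delta}$.

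The only real work is bookkeeping in this last simplification; there is no analytic obstacle, because unlike the simple random walk case there is no square root and no double pole to resolve via Lemmas \ref{l:simple-pole}--\ref{l:double-pole}. The asymptotics here come directly from dividing a polynomial in $n$ and $q^n$ by another such polynomial and extracting the leading order as $n \to \infty$, which is elementary. The main thing to be careful about is checking that the $q^{-n}$ correction from $[y^n]G(q,y)$ truly decays (so that it contributes only to $o(1)$ in $B^{\ball}_{d,q}$), and that the coefficient $1$ in the numerator of $[y^n]G_1(q,y)$ likewise contributes only $o(1)$ after division by the exponentially growing denominator.
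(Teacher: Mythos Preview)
Your proposal is correct and follows essentially the same approach as the paper: verify the hypothesis of Corollary \ref{cor:gamma-small-good-asymp}, compute the asymptotics of $[y^n]G(q,y)$ and $[y^n]G_1(q,y)$ directly from the explicit formulas, and substitute. You actually supply more detail than the paper does (it merely states the two asymptotic formulas without derivation), but the route is identical.
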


It is interesting to compare the coefficients of the sphere case and the ball case. We observe that $A^{\sph}_{d, q}$ and $A^{\ball}_{d, q}$ share the same formula, while $B^{\sph}_{d,q}$ and $B^{\ball}_{d,q}$ appear to be different. If one considers only the main term of the linear asymptotic formula ($W \sim An$), one would not see the difference between the sphere case and the ball case. One needs to go to the second term ($W = An+B + o(1)$) in order to be able to tell the asymptotics of the two cases apart.

In fact, we know that $B^{\sph}_{d,q}$ is always strictly greater than $B^{\ball}_{d,q}$. This relationship, along with others, will be proved in Section \ref{sec:ineq}.

\bigskip

\section{Applications to Short-Distance Cases}
\label{sec:examples}
We have done hard work in previous sections to derive the general linear asymptotic formulas in the cases of simple random walks, spheres, and balls. In this section, we give examples of special cases when $d = 1, 2$. As one might expect in such small cases, the formulas below are nice and short. Furthermore, the special cases when $d = 1$ and when $d = 2$ seem to appear frequently in graph theory. We decide to present these formulas here so that they become convenient for the readers' future uses.

The case $d = 1$ corresponds to when $X$ and $Y$ are adjacent vertices in the infinite regular tree. This will be considered in Subsection \ref{subsec:ex-d1}. The case $d = 2$ corresponds to when $X$ and $Y$ have a unique common neighbor. We consider this in Subsection \ref{subsec:ex-d2}.

\subsection{Examples when $d = 1$} \label{subsec:ex-d1}
In this subsection, we fix $d = 1$, while we still allow $\alpha \in [0,1)$ and $q \in \mathbb{Z}_{\ge 2}$ to be arbitrary. In this case, in our infinite regular tree $\cG = \mathbb{T}_{q+1}$, the two vertices $X$ and $Y$ are adjacent.

First, we consider the case of simple random walks. Let $\fm_X^n$ and $\fm_Y^n$ be as defined in Subsection \ref{subsec:LA-SRW}. Theorem \ref{thm:SRW_An+B} specializes to

\begin{proposition}
Let $X$ and $Y$ be adjacent vertices in $\cG = \mathbb{T}_{q+1}$. Then,
\[
W_1\!\left( \fm_X^n, \fm_Y^n \right) = 2(1-\alpha) \cdot \frac{(q-1)^2}{(q+1)^2} \cdot n + \frac{q^2+6q+1}{(q+1)^2} + o(1),
\]
as $n \to \infty$.
\end{proposition}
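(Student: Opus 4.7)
The statement is nothing more than a direct specialization of the general linear asymptotic formula in Theorem \ref{thm:SRW_An+B} to the case $d = 1$, so the entire proof consists of substituting $d = 1$ (which gives $\delta = \lfloor 1/2 \rfloor = 0$ and $\delta' = \lceil 1/2 \rceil = 1$) into the closed-form expressions for $A^{\SRW}_{\alpha,d,q}$ and $B^{\SRW}_{\alpha,d,q}$, and then simplifying. No new analytic ingredient is required.

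The plan for the $A$-coefficient is straightforward: with $\delta = 0$ and $\delta' = 1$, one computes $q+1 - q^{1-\delta'} - q^{-\delta} = q+1 - 1 - 1 = q-1$, so
\[
A^{\SRW}_{\alpha,1,q} = 2(1-\alpha)(q-1) \cdot \frac{q-1}{(q+1)^2} = 2(1-\alpha) \cdot \frac{(q-1)^2}{(q+1)^2},
\]
which matches the stated coefficient of $n$. For the $B$-coefficient, I would substitute $d=1$, $\delta=0$, $\delta'=1$ into
\[
B^{\SRW}_{\alpha,d,q} = d + \frac{2(\delta q^{-\delta} + \delta' q^{1-\delta'})}{q+1} + \frac{2(q^{1-\delta} - q^{1-\delta'})}{(q+1)^2},
\]
obtaining $B^{\SRW}_{\alpha,1,q} = 1 + \frac{2}{q+1} + \frac{2(q-1)}{(q+1)^2}$, and then combine over the common denominator $(q+1)^2$. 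The numerator works out to $(q^2+2q+1) + 2(q+1) + 2(q-1) = q^2 + 6q + 1$, giving the claimed value $\frac{q^2+6q+1}{(q+1)^2}$.

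An alternative route, essentially equivalent, would be to invoke Corollary \ref{cor:dist-1-formula} directly, which in the $d=1$ case reduces the transportation distance to a combination of $[y^n]G_1(q,y)$, $[y^n]G(q,y)$, and $[y^n]\gamma_0(y)$. One would then plug in the asymptotic formulas provided by Propositions \ref{prop:SRW_G1} and \ref{prop:SRW_G}, and use that $[y^n]\gamma_0(y) \to 0$ as $n \to \infty$ (which is an instance of the hypothesis of Corollary \ref{cor:gamma-small-good-asymp} holding for simple random walks, since the lazy simple random walk on $\bbT_{q+1}$ is transient for $q \ge 2$). This path leads to the same coefficients after elementary algebra.

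There is no real obstacle: the only risk is an arithmetic slip when assembling the fraction $\frac{(q+1)^2 + 2(q+1) + 2(q-1)}{(q+1)^2}$ for $B$, so I would carry out that combination carefully and double-check by evaluating at $q = 2$ (where the formula predicts $B = 17/9$, matching $1 + 2/3 + 2/9$).
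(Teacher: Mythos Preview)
Your proposal is correct and follows exactly the approach the paper takes: the paper presents this proposition simply as the specialization of Theorem~\ref{thm:SRW_An+B} to $d=1$, and your substitution $\delta=0$, $\delta'=1$ together with the arithmetic verification of $A^{\SRW}_{\alpha,1,q}$ and $B^{\SRW}_{\alpha,1,q}$ is precisely what is needed. The alternative route you sketch via Corollary~\ref{cor:dist-1-formula} and Propositions~\ref{prop:SRW_G1}--\ref{prop:SRW_G} is also valid and amounts to the same computation unwound one level.
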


Second, we consider the case of spheres. Let $\sigma_X^n$ and $\sigma_Y^n$ be as defined in Subsection \ref{subsec:LA-sph}. Theorem \ref{thm:sph_An+B} specializes to

\begin{proposition}
Let $X$ and $Y$ be adjacent vertices in $\cG = \mathbb{T}_{q+1}$. Then,
\[
W_1\!\left( \sigma_X^n, \sigma_Y^n \right) = 2 \cdot \frac{q-1}{q+1} \cdot n + 1 + o(1),
\]
as $n \to \infty$.
\end{proposition}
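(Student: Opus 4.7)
The plan is to specialize Theorem~\ref{thm:sph_An+B} directly to the case $d=1$. With $d=1$, we have $\delta = \lfloor 1/2 \rfloor = 0$ and $\delta' = \lceil 1/2 \rceil = 1$, which gives $q^{1-\delta'} = q^{0} = 1$ and $q^{-\delta} = q^{0} = 1$. Plugging these into the formula for $A^{\sph}_{d,q}$ yields
\[
A^{\sph}_{1,q} = \frac{2(q+1-1-1)}{q+1} = \frac{2(q-1)}{q+1},
\]
which matches the $n$-coefficient in the claim. For the constant term, with $\delta = 0, \delta' = 1$, we have $\delta'(q-1)+1 = q$ and $\delta(q-1)+q = q$, so
\[
B^{\sph}_{1,q} = 1 + \frac{-4q+2q+2q}{q^2-1} = 1.
\]
Combining these with the $o(1)$ error from Theorem~\ref{thm:sph_An+B} produces the asserted formula.

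An alternative (and arguably cleaner) route uses Corollary~\ref{cor:dist-1-formula}, which at distance $d=1$ reads
\[
W_1\!\left( \sigma_X^n, \sigma_Y^n \right) = (2q-2)\cdot[y^n]G_1(q,y) + \tfrac{q+1}{q}\cdot[y^n]G(q,y) - \tfrac{1}{q}\cdot[y^n]\gamma_0(y).
\]
For the sphere family, the computations at the start of Subsection~\ref{subsec:LA-sph} give $[y^n]G(q,y) = \tfrac{q}{q+1}$ for $n\ge 1$, $[y^n]G_1(q,y) = \tfrac{n}{q+1}$, and $[y^n]\gamma_0(y) = [y^n]G(0,y) = 0$ for $n\ge 1$ (because the uniform sphere of radius $n\ge 1$ places no mass at its center). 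Substituting these three values yields the \emph{exact} identity $W_1(\sigma_X^n,\sigma_Y^n) = \tfrac{2(q-1)}{q+1}\, n + 1$ for every $n\ge 1$, which is strictly stronger than the $o(1)$ statement in the proposition.

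There is no genuine obstacle here: the only work is arithmetic substitution, and the harder analytic input (Theorem~\ref{thm:sph_An+B} and the explicit generating functions $G$, $G_1$ for the sphere case) has already been established. I would present the second route, since it simultaneously verifies Theorem~\ref{thm:sph_An+B} in this baby case, removes the $o(1)$, and demonstrates concretely how Corollary~\ref{cor:dist-1-formula} streamlines short-distance computations.
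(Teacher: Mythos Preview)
Your first route is exactly the paper's proof: the proposition is stated as a direct specialization of Theorem~\ref{thm:sph_An+B} to $d=1$, and your arithmetic with $\delta=0$, $\delta'=1$ is correct. Your second route via Corollary~\ref{cor:dist-1-formula} is a genuine (small) addition: the paper remarks immediately after the proposition that the formula is in fact exact for all $n\ge 0$, but does not write out the one-line derivation you give, so your alternative supplies that missing verification.
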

One noteworthy point about the sphere case is that the formula is in fact {\em exact}. We actually have that $W_1\!\left( \sigma_X^n, \sigma_Y^n \right) = 2 \cdot \frac{q-1}{q+1} \cdot n + 1$ holds for all non-negative integers $n$.

Third, we consider the case of balls. Let $\beta_X^n$ and $\beta_Y^n$ be as defined in Subsection \ref{subsec:LA-balls}. Theorem \ref{thm:ball_An+B} specializes to

\begin{proposition}
Let $X$ and $Y$ be adjacent vertices in $\cG = \mathbb{T}_{q+1}$. Then,
\[
W_1\!\left( \beta_X^n, \beta_Y^n \right) = \left( \frac{q-1}{q+1} \right) (2n+1) + o(1),
\]
as $n \to \infty$.
\end{proposition}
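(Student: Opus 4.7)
The plan is to specialize Theorem \ref{thm:ball_An+B} to the case $d=1$. When $d=1$, the auxiliary parameters become $\delta = \lfloor 1/2 \rfloor = 0$ and $\delta' = \lceil 1/2 \rceil = 1$, so $q^{-\delta} = 1$, $q^{1-\delta'} = 1$, $q^{1-\delta} = q$, and the $\delta$ terms in $B^{\ball}_{d,q}$ drop out. First I would plug $\delta=0$, $\delta'=1$ into the formula for $A^{\ball}_{d,q}$ to get
\[
A^{\ball}_{1,q} = \frac{2(q+1-1-1)}{q+1} = \frac{2(q-1)}{q+1}.
\]
Next, I would plug the same values into the formula for $B^{\ball}_{d,q}$, noting that the $\delta$ summand contributes $2(q+1)$ and the $\delta'$ summand contributes $2(q+1)$, so the numerator reduces to $-6q-2+4(q+1) = -2(q-1)$. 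After dividing by $q^2-1 = (q-1)(q+1)$ and adding the $d=1$ term, this gives
\[
B^{\ball}_{1,q} = 1 - \frac{2}{q+1} = \frac{q-1}{q+1}.
\]
Combining the two yields $\frac{q-1}{q+1}(2n+1) + o(1)$, as desired.

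An alternative route, which serves as a sanity check, is to invoke Corollary \ref{cor:dist-1-formula} directly together with the closed forms in Subsection \ref{subsec:LA-balls}. Setting $x=q$ in the formula for $G(x,y)$ gives $[y^n]G(q,y) = (q^{n+1}-1)/(q^{n+1}+q^n-2) \to q/(q+1)$, and similarly $[y^n]G_1(q,y) = n/(q+1) - 1/(q^2-1) + o(1)$. The term $[y^n]\gamma_0(y) = g(0,n) = (q-1)/(q^{n+1}+q^n-2)$ is exponentially small, so only the first two contributions in Corollary \ref{cor:dist-1-formula} survive, and a short computation recovers the stated formula.

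There is no genuine obstacle here: the work has all been done in Theorem \ref{thm:ball_An+B} (or in Corollary \ref{cor:dist-1-formula} plus the closed forms), and what remains is a short arithmetic simplification. The only mild care needed is to track that $-6q-2+4(q+1) = -2q+2$ factors as $-2(q-1)$, which cancels the factor $q-1$ in $q^2-1$ and produces the clean expression $(q-1)/(q+1)$ for $B^{\ball}_{1,q}$, ensuring that $A^{\ball}_{1,q}$ and $B^{\ball}_{1,q}$ combine into the factored form $\frac{q-1}{q+1}(2n+1)$.
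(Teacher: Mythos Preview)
Your proposal is correct and follows exactly the paper's approach: the paper simply states that the proposition is the specialization of Theorem \ref{thm:ball_An+B} to $d=1$, and your arithmetic carrying out that specialization is accurate. The alternative sanity check via Corollary \ref{cor:dist-1-formula} is a nice extra, though the paper itself does not include it.
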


We remark that in this case of adjacent $X$ and $Y$, similar to the case of the sphere above, it is easy to find an {\em exact} formula for the transportation distance:
\[
W_1\!\left( \beta_X^n, \beta_Y^n \right) = \frac{q-1}{q+1-2q^{-n}}(2n+1),
\]
for all non-negative integers $n$.

\bigskip

\subsection{Examples when $d = 2$} \label{subsec:ex-d2}
In this subsection, we fix $d = 2$. As before, we allow $\alpha \in [0,1)$ and $q \in \mathbb{Z}_{\ge 2}$ to be arbitrary.

First, we consider the case of simple random walks. Let $\fm_X^n$ and $\fm_Y^n$ be as defined in Subsection \ref{subsec:LA-SRW}. Theorem \ref{thm:SRW_An+B} specializes to

\begin{proposition}
Let $X$ and $Y$ be vertices in $\cG = \mathbb{T}_{q+1}$ with $\dist(X,Y) = 2$. Then,
\[
W_1\!\left(\fm_X^n, \fm_Y^n\right) = 2(1-\alpha) \cdot \frac{(q-1)^2}{q(q+1)} \cdot n + \frac{2(q+1)}{q} + o(1),
\]
as $n \to \infty$.
\end{proposition}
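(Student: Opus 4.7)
The plan is to deduce this proposition directly from Theorem \ref{thm:SRW_An+B} by specializing to $d = 2$. With $d = 2$ we have $\delta = \lfloor d/2 \rfloor = 1$ and $\delta' = \lceil d/2 \rceil = 1$, so that $q^{1-\delta'} = 1$, $q^{-\delta} = q^{-1}$, and $q^{1-\delta} = q^{1-\delta'} = 1$. Plugging these directly into the closed forms
\[
A^{\SRW}_{\alpha,d,q} = 2(1-\alpha)(q+1 - q^{1-\delta'} - q^{-\delta}) \cdot \frac{q-1}{(q+1)^2}
\]
and
\[
B^{\SRW}_{\alpha,d,q} = d + \frac{2(\delta q^{-\delta} + \delta' q^{1-\delta'})}{q+1} + \frac{2(q^{1-\delta} - q^{1-\delta'})}{(q+1)^2}
\]
given by Theorem \ref{thm:SRW_An+B} reduces the computation to algebraic simplification.

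For the leading coefficient, I would first simplify $q+1 - 1 - q^{-1} = (q^2-1)/q = (q-1)(q+1)/q$, which turns the expression for $A^{\SRW}_{\alpha,2,q}$ into $2(1-\alpha)\cdot\tfrac{(q-1)(q+1)}{q}\cdot\tfrac{q-1}{(q+1)^2} = 2(1-\alpha)\cdot\tfrac{(q-1)^2}{q(q+1)}$, matching the stated coefficient in front of $n$. For the constant term, the third summand of $B^{\SRW}_{\alpha,d,q}$ vanishes because $q^{1-\delta} - q^{1-\delta'} = 1 - 1 = 0$, and the middle summand collapses to $\tfrac{2(q^{-1}+1)}{q+1} = \tfrac{2(q+1)/q}{q+1} = \tfrac{2}{q}$, so $B^{\SRW}_{\alpha,2,q} = 2 + \tfrac{2}{q} = \tfrac{2(q+1)}{q}$, as claimed.

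Since these two identifications match the statement exactly, no further argument is needed beyond invoking Theorem \ref{thm:SRW_An+B}. There is no genuine obstacle here; the only (very mild) thing to watch is the cancellation $q^{1-\delta} = q^{1-\delta'}$, which is particular to even $d$ (indeed to $d=2$ here), and which is precisely why the second-order term simplifies so cleanly in this case.
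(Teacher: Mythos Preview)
Your proposal is correct and follows exactly the paper's approach: the paper also obtains this proposition simply by specializing Theorem~\ref{thm:SRW_An+B} to $d=2$, and your algebraic simplifications of $A^{\SRW}_{\alpha,2,q}$ and $B^{\SRW}_{\alpha,2,q}$ are accurate.
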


Second, we consider the case of spheres. Let $\sigma_X^n$ and $\sigma_Y^n$ be as defined in Subsection \ref{subsec:LA-sph}. Theorem \ref{thm:sph_An+B} specializes to

\begin{proposition}
Let $X$ and $Y$ be vertices in $\cG = \mathbb{T}_{q+1}$ with $\dist(X,Y) = 2$. Then,
\[
W_1\!\left( \sigma_X^n, \sigma_Y^n \right) = 2 \cdot \frac{q-1}{q} \cdot n + 2 \cdot \frac{q^2+1}{q(q+1)} + o(1),
\]
as $n \to \infty$.
\end{proposition}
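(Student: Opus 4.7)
The plan is to derive this proposition as a direct specialization of Theorem \ref{thm:sph_An+B}, which gives the linear asymptotics for the sphere case in general distance $d$. Since the general theorem has already been proved via Corollary \ref{cor:gamma-small-good-asymp} combined with the closed forms
\[
[y^n] G(q,y) = \tfrac{q}{q+1} \text{ (for } n \ge 1\text{)}, \qquad [y^n] G_1\!(q,y) = \tfrac{n}{q+1},
\]
there is no new analytic content to establish; the task reduces to substitution and algebraic simplification.

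First I would record that for $d = 2$ we have $\delta = \lfloor 2/2 \rfloor = 1$ and $\delta' = \lceil 2/2 \rceil = 1$, and hence $q^{1-\delta'} = 1$ and $q^{-\delta} = q^{-1}$. Plugging these into $A^{\sph}_{d,q}$ gives
\[
A^{\sph}_{2,q} = \frac{2\!\left( q + 1 - 1 - q^{-1} \right)}{q+1} = \frac{2(q^2 - 1)/q}{q+1} = \frac{2(q-1)}{q},
\]
which matches the claimed leading coefficient.

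Next I would simplify $B^{\sph}_{d,q}$ at $d = 2$. Using $\delta'(q-1) + 1 = q$ and $\delta(q-1) + q = 2q - 1$, the numerator becomes
\[
-4q + 2q + 2(2q-1)/q \;=\; -2q + (4q - 2)/q \;=\; -2(q-1)^2/q,
\]
so that, after dividing by $q^2 - 1 = (q-1)(q+1)$, one obtains
\[
B^{\sph}_{2,q} = 2 - \frac{2(q-1)}{q(q+1)} = \frac{2(q^2+1)}{q(q+1)},
\]
as claimed. Since each step is an elementary identity manipulation, I do not expect a genuine obstacle; the only mild pitfall is bookkeeping with the signs and the factor $q^{-\delta}$, which I would double-check by verifying the identity $(\delta(q-1)+q)q^{-\delta} = (2q-1)/q$ explicitly. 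With these two computations, the proposition follows from Theorem \ref{thm:sph_An+B}.
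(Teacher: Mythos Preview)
Your proposal is correct and takes essentially the same approach as the paper: the paper simply states that this proposition is the specialization of Theorem~\ref{thm:sph_An+B} to $d=2$, and your substitution of $\delta=\delta'=1$ followed by algebraic simplification carries that out explicitly and accurately.
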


Third, we consider the case of balls. Let $\beta_X^n$ and $\beta_Y^n$ be as defined in Subsection \ref{subsec:LA-balls}. Theorem \ref{thm:ball_An+B} specializes to

\begin{proposition}
Let $X$ and $Y$ be vertices in $\cG = \mathbb{T}_{q+1}$ with $\dist(X,Y) = 2$. Then,
\[
W_1\!\left( \beta_X^n, \beta_Y^n \right) = 2 \cdot \frac{q-1}{q} \cdot n + 2 \cdot \frac{q-1}{q+1} + o(1),
\]
as $n \to \infty$.
\end{proposition}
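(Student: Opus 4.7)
The plan is to derive this statement as a direct specialization of Theorem \ref{thm:ball_An+B}, which we may assume. With $d=2$, we have $\delta=\lfloor 2/2\rfloor=1$ and $\delta'=\lceil 2/2\rceil=1$, so $q^{1-\delta'}=1$ and $q^{-\delta}=q^{-1}$. I would simply substitute these values into the formulas for $A^{\ball}_{d,q}$ and $B^{\ball}_{d,q}$ from Theorem \ref{thm:ball_An+B} and simplify.

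For the leading coefficient, the computation is
\[
A^{\ball}_{2,q} = \frac{2(q+1-1-q^{-1})}{q+1} = \frac{2(q-q^{-1})}{q+1} = \frac{2(q^2-1)/q}{q+1} = \frac{2(q-1)}{q},
\]
using the factorization $q^2-1=(q-1)(q+1)$.

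For the constant term, the numerator in the formula becomes
\[
-6q-2+2(\delta'(q-1)+2)q^{1-\delta'}+2(\delta(q-1)+q+1)q^{-\delta} = -6q-2+2(q+1)+\tfrac{2(2q)}{q} = -4q+4,
\]
so that
\[
B^{\ball}_{2,q} = 2 + \frac{-4(q-1)}{(q-1)(q+1)} = 2 - \frac{4}{q+1} = \frac{2(q-1)}{q+1},
\]
which matches the claimed constant. Combining these two substitutions with the $o(1)$ error term already provided by Theorem \ref{thm:ball_An+B} completes the proof. There is no genuine obstacle here: the only possible source of arithmetic slip is the simplification of the constant term, where one must correctly handle the cancellation $q^2-1=(q-1)(q+1)$ to see that $-4q+4$ over $q^2-1$ reduces to $-4/(q+1)$.
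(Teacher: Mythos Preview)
Your proposal is correct and follows exactly the paper's approach: the proposition is stated in the paper as a direct specialization of Theorem~\ref{thm:ball_An+B} to $d=2$, and your arithmetic for both $A^{\ball}_{2,q}$ and $B^{\ball}_{2,q}$ is accurate.
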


\bigskip

\section{Inequalities between the Coefficients}
\label{sec:ineq}
Our main goal in this section is to prove Theorem \ref{thm:great-ineq}. In Section \ref{sec:lin-asymp}, we have derived the explicit formulas for the coefficients $A^{\SRW}_{\alpha, d, q}$, $A^{\sph}_{d,q}$, $A^{\ball}_{d,q}$, $B^{\SRW}_{\alpha, d, q}$, $B^{\sph}_{d,q}$, $B^{\ball}_{d,q}$. Rather surprisingly, there are relationships between these coefficients, some of which are easier to see than others. In Theorem \ref{thm:great-ineq}, we show that all these six coefficients are {\em always strictly positive} under our running assumption: $\alpha \in [0,1)$, $d \ge 1$, and $q \ge 2$. Furthermore, they satisfy certain ordering properties which hold for every $\alpha, d, q$.

A cursory inspection of the formulas for the coefficients reveals that there is a ubiquitous factor
\[
q+1-q^{1-\delta'} - q^{-\delta}
\]
in each of $A^{\SRW}_{\alpha, d, q}$, $A^{\sph}_{d,q}$, $A^{\ball}_{d,q}$. It turns out that this same interesting factor also appears in the differences $B^{\SRW}_{\alpha, d, q} - B^{\sph}_{d,q}$ and $B^{\sph}_{d,q} - B^{\ball}_{d,q}$, as we will observe in the proof of Theorem \ref{thm:great-ineq}. Because of its ubiquity, we devote one lemma, Lemma \ref{lemma:u-factor}, to proving a simple property of the factor.

In this section, as in the previous ones, we let $d \ge 1$ and $q \ge 2$ be positive integers. We continue to use the notations $\delta' := \left\lceil d/2 \right\rceil$ and $\delta := \left\lfloor d/2 \right\rfloor$. We start by noting the following.

\begin{lemma} \label{lemma:u-factor}
We have $q+1-q^{1-\delta'} - q^{-\delta} \ge q-1 > 0$.
\end{lemma}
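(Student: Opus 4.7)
The plan is to reduce the claimed inequality $q+1-q^{1-\delta'}-q^{-\delta} \ge q-1$ to the equivalent statement $q^{1-\delta'} + q^{-\delta} \le 2$, and then bound each of the two summands by $1$ separately. The second inequality $q-1 > 0$ is then immediate from the running assumption $q \ge 2$.

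For the bound $q^{1-\delta'} \le 1$, I would observe that $d \ge 1$ forces $\delta' = \lceil d/2 \rceil \ge 1$, so $1 - \delta' \le 0$, hence $q^{1-\delta'} \le q^0 = 1$ since $q \ge 2 > 1$. For the bound $q^{-\delta} \le 1$, I would use that $\delta = \lfloor d/2 \rfloor \ge 0$, so $-\delta \le 0$, hence $q^{-\delta} \le 1$. Adding these gives $q^{1-\delta'} + q^{-\delta} \le 2$, which rearranges to the desired inequality.

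There is no real obstacle here; this is a short monotonicity-of-exponentials argument. It is worth remarking (although optional) that equality in $q+1-q^{1-\delta'}-q^{-\delta} = q-1$ occurs exactly in the case $d = 1$ (where $\delta' = 1$ and $\delta = 0$), while for all $d \ge 2$ the inequality is strict; this observation is not needed for later use of the lemma, but may be worth recording in passing.
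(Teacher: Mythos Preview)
Your proof is correct and takes essentially the same approach as the paper: the paper's proof simply notes that $\delta' \ge 1$ and $\delta \ge 0$, which is exactly the content of your two separate bounds $q^{1-\delta'} \le 1$ and $q^{-\delta} \le 1$. Your write-up spells out the rearrangement and monotonicity step more explicitly, and your parenthetical remark on the equality case $d=1$ is a harmless (and correct) addition.
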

\begin{proof}
This follows immediately from the observations that $\delta' \ge 1$ and that $\delta \ge 0$.
\end{proof}

The main result of this section is the following theorem about the relationships between the coefficients $A^{\SRW}_{\alpha, d, q}$, $A^{\sph}_{d,q}$, $A^{\ball}_{d,q}$, $B^{\SRW}_{\alpha, d, q}$, $B^{\sph}_{d,q}$, $B^{\ball}_{d,q}$.

\begin{theorem} \label{thm:great-ineq}
For any $\alpha \in [0,1)$, $d \in \mathbb{Z}_{\ge 1}$, and $q \in \mathbb{Z}_{\ge 2}$, we have the following relations
\begin{align}
& 0 < A^{\SRW}_{\alpha, d, q} < A^{\sph}_{d,q} = A^{\ball}_{d,q} < 2, \label{ineq:A} \\
& \hphantom{0 < } B^{\SRW}_{\alpha, d, q} > B^{\sph}_{d,q} > B^{\ball}_{d,q} \ge \frac{1}{3}. \label{ineq:B}
\end{align}
Furthermore, the equality $B^{\ball}_{d,q} = \frac{1}{3}$ occurs if and only if $(d,q) = (1,2)$.
\end{theorem}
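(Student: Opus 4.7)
The plan is to dismantle the theorem along the two chains by isolating the ubiquitous factor
\[
U := q+1 - q^{1-\delta'} - q^{-\delta},
\]
which Lemma \ref{lemma:u-factor} certifies to be at least $q-1 > 0$, and then to attack the sharp lower bound $B^{\ball}_{d,q} \ge 1/3$ by a one-variable monotonicity argument in $d$.

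For the $A$-chain \eqref{ineq:A}, the closed forms give $A^{\sph}_{d,q} = A^{\ball}_{d,q} = 2U/(q+1)$ immediately, so the middle equality is read off. Because $q^{1-\delta'}$ and $q^{-\delta}$ are strictly positive, $U < q+1$, and hence $A^{\sph}_{d,q} < 2$. For the SRW--sphere comparison, I would rewrite
\[
A^{\SRW}_{\alpha,d,q} = (1-\alpha) \cdot \frac{q-1}{q+1} \cdot A^{\sph}_{d,q}.
\]
Since $1-\alpha \in (0,1]$ and $(q-1)/(q+1) \in (0,1)$, their product lies strictly between $0$ and $1$, giving $0 < A^{\SRW}_{\alpha,d,q} < A^{\sph}_{d,q}$.

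For the two strict $B$-inequalities in \eqref{ineq:B}, I would compute the successive differences over a common denominator. After cancellation, both reduce to multiples of the same universal factor $U$:
\[
B^{\sph}_{d,q} - B^{\ball}_{d,q} = \frac{2U}{q^2-1}, \qquad B^{\SRW}_{\alpha,d,q} - B^{\sph}_{d,q} = \frac{4qU}{(q-1)(q+1)^2},
\]
both manifestly positive. It is worth remarking that the SRW--sphere gap, like $B^{\SRW}_{\alpha,d,q}$ itself, happens not to depend on the laziness $\alpha$.

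For the delicate lower bound $B^{\ball}_{d,q} \ge 1/3$, the plan is induction on $d$. The base case $d=1$ gives $\delta=0$ and $\delta'=1$, and direct substitution collapses the formula to $B^{\ball}_{1,q} = (q-1)/(q+1) = 1 - 2/(q+1)$; this is strictly increasing in $q$ and equals $1/3$ precisely at $q=2$. For the inductive step, I would compute $B^{\ball}_{d+1,q} - B^{\ball}_{d,q}$, handling the two parities of $d$ separately (when $d$ is even, $\delta'$ advances by $1$; when $d$ is odd, $\delta$ advances by $1$). Both parities should collapse to the same compact expression
\[
B^{\ball}_{d+1,q} - B^{\ball}_{d,q} = 1 - \frac{2q^{-j}\bigl(j(q-1)+1\bigr)}{q+1}, \qquad j := \lceil d/2 \rceil,
\]
whose non-negativity is equivalent to $q^j(q+1) \ge 2\bigl(j(q-1)+1\bigr)$. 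This follows at once from Bernoulli's inequality $q^j \ge 1 + j(q-1)$ together with $q+1 \ge 3 > 2$; the latter is strict for $q \ge 2$, so the monotonicity is strict as well. Chaining these estimates gives $B^{\ball}_{d,q} \ge B^{\ball}_{1,q} \ge 1/3$, and both links are tight precisely when $d=1$ and $q=2$. The main obstacle will be the parity bookkeeping in this inductive step: the agreeable collapse of the two parity cases to a single formula parametrized by $j = \lceil d/2 \rceil$ is not obvious \emph{a priori} and is exactly what keeps the induction uniform and the Bernoulli argument clean.
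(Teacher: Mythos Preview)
Your proof is correct. The treatment of the $A$-chain and of the two strict $B$-differences $B^{\SRW}-B^{\sph}$ and $B^{\sph}-B^{\ball}$ is essentially identical to the paper's: in both cases one isolates the factor $U = q+1 - q^{1-\delta'} - q^{-\delta}$ and invokes Lemma~\ref{lemma:u-factor}.

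Where you genuinely diverge is in the proof of $B^{\ball}_{d,q} \ge 1/3$. The paper takes a brute-force route: it observes the crude lower bound $B^{\ball}_{d,q} > d - (6q+2)/(q^2-1)$, notes that this already exceeds $1/3$ outside a finite exceptional set of thirteen pairs $(d,q)$, and then tabulates $B^{\ball}_{d,q}$ for those thirteen pairs by hand. Your approach instead establishes strict monotonicity in $d$ by computing the increment
\[
B^{\ball}_{d+1,q} - B^{\ball}_{d,q} = 1 - \frac{2q^{-j}\bigl(j(q-1)+1\bigr)}{q+1}, \qquad j = \lceil d/2 \rceil,
\]
and dispatching its positivity with Bernoulli's inequality $q^j \ge 1 + j(q-1)$ together with $q+1 > 2$. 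This reduces everything to the single base case $B^{\ball}_{1,q} = (q-1)/(q+1)$, which is $1/3$ exactly at $q=2$. Your route is cleaner in that it avoids any case enumeration and yields the strict monotonicity $B^{\ball}_{d,q} < B^{\ball}_{d+1,q}$ as a byproduct; the paper's route is shorter to write down but relies on checking a table. The parity bookkeeping you flag as the main obstacle does work out exactly as you claim: both parities collapse to the displayed formula with $j = \lceil d/2 \rceil$.
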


\begin{proof}
It is clear by inspection that $A^{\sph}_{d,q} = A^{\ball}_{d,q} < 2$. To complete the proof of Inequality (\ref{ineq:A}), it suffices to show $0 < A^{\SRW}_{\alpha, d, q} < A^{\sph}_{d,q}$. We proved in Lemma \ref{lemma:u-factor} that the factor $q+1-q^{1-\delta'} - q^{-\delta}$ is a positive number. By canceling the factor, it suffices to show
\[
0 < 2(1-\alpha) \cdot \frac{q-1}{(q+1)^2} < \frac{2}{q+1},
\]
which is clear.

For Inequality (\ref{ineq:B}), we work with one inequality at a time, from the left to the right. For the left one, note that
\begin{align*}
B^{\SRW}_{\alpha, d, q} - B^{\sph}_{d,q} &= \frac{4q}{(q+1)^2(q-1)} \cdot \left( q+1-q^{1-\delta'} - q^{-\delta} \right) \\
&\ge \frac{4q}{(q+1)^2(q-1)} \cdot (q-1) = \frac{4q}{(q+1)^2} > 0.
\end{align*}
For the middle one, note that
\begin{align*}
B^{\sph}_{d,q} - B^{\ball}_{d,q} &= \frac{2}{q^2-1} \cdot \left( q+1-q^{1-\delta'} - q^{-\delta} \right) \\
&\ge \frac{2(q-1)}{q^2-1} = \frac{2}{q+1} > 0.
\end{align*}
For the right one, note that
\[
B^{\ball}_{d,q} > d - \frac{6q+2}{q^2-1}.
\]
The right hand side of the inequality above is at least $1/3$ for any $(d,q) \in \mathbb{Z}_{\ge 1} \times \mathbb{Z}_{\ge 2}$, {\em except} the following thirteen ordered pairs: $(1,2)$, $(1,3)$, $(1,4)$, $(1,5)$, $(1,6)$, $(1,7)$, $(1,8)$, $(1,9)$, $(2,2)$, $(2,3)$, $(2,4)$, $(3,2)$, $(4,2)$. For these exceptional cases, we compute $B^{\ball}_{d,q}$ directly:

\begin{center}
\begin{tabular}{|c|c|}
\hline
&\\[-1em]
$(d,q)$ & $B^{\ball}_{d,q}$ \\
&\\[-1em]
\hline
$(1,2)$ & $1/3$ \\
$(1,3)$ & $1/2$ \\
$(1,4)$ & $3/5$ \\
$(1,5)$ & $2/3$ \\
$(1,6)$ & $5/7$ \\
\hline
\end{tabular}
\hspace{1 cm}
\begin{tabular}{|c|c|}
\hline
&\\[-1em]
$(d,q)$ & $B^{\ball}_{d,q}$ \\
&\\[-1em]
\hline
$(1,7)$ & $3/4$ \\
$(1,8)$ & $7/9$ \\
$(1,9)$ & $4/5$ \\
$(2,2)$ & $2/3$ \\
\hline
\end{tabular}
\hspace{1 cm}
\begin{tabular}{|c|c|}
\hline
&\\[-1em]
$(d,q)$ & $B^{\ball}_{d,q}$ \\
&\\[-1em]
\hline
$(2,3)$ & $1$ \\
$(2,4)$ & $6/5$ \\
$(3,2)$ & $1$ \\
$(4,2)$ & $3/2$ \\
\hline
\end{tabular}
\end{center}

We see that in each of these thirteen cases, the coefficient $B^{\ball}_{d,q}$ is greater than or equal to $1/3$. Note also that the equality case $B^{\ball}_{d,q} = 1/3$ occurs if and only if $(d,q) = (1,2)$. This finishes the proof.
\end{proof}

The lower bounds for $B^{\sph}_{d,q}$ and $B^{\SRW}_{d,q}$ can be determined similarly. We present these in Proposition \ref{prop:min-B} below. Since the proof is similar, we will omit it.

\begin{proposition}\label{prop:min-B}
We have the following.
\begin{itemize}
\item For any $d \in \mathbb{Z}_{\ge 1}$ and $q \in \mathbb{Z}_{\ge 2}$, we have $B^{\sph}_{d,q} \ge 1$, and the equality $B^{\sph}_{d,q} = 1$ occurs if and only if $d = 1$,
\item $\inf \left\{B^{\SRW}_{\alpha, d, q} : \alpha \in [0,1), d \in \mathbb{Z}_{\ge 1}, q \in \mathbb{Z}_{\ge 2} \right\} = 1$,
\item The infimum above is not attained by any values of $\alpha, d, q$.
\end{itemize}
\end{proposition}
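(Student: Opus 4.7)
The plan is to derive all three items from the closed-form formulas in Theorems~\ref{thm:sph_An+B} and \ref{thm:SRW_An+B}, using Theorem~\ref{thm:great-ineq} as the main leverage for items (2) and (3). The first item is the hardest and requires a direct case analysis on $d$; once it is established, the remaining items follow with little extra work.

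For the first item, the case $d = 1$ is a direct substitution: with $\delta = 0$ and $\delta' = 1$, the numerator of the fractional part of $B^{\sph}_{d,q}$ becomes $-4q + 2((q-1)+1) + 2q = 0$, so $B^{\sph}_{1,q} = 1$. For $d \ge 2$, I would split on the parity of $d$. In the even case $d = 2m$ with $m \ge 1$ (so $\delta = \delta' = m$), combining the two $q^{-m}$-terms simplifies the fractional part of $B^{\sph}_{2m,q}$ to
\[
\frac{-4q(1 - q^{-m}) + 2m(q^2 - 1)\, q^{-m}}{q^2 - 1},
\]
and an analogous computation in the odd case $d = 2m+1$ with $m \ge 1$ yields
\[
\frac{-4q(1 - q^{-m}) + 4m(q - 1)\, q^{-m}}{q^2 - 1}.
\]
In both cases, $B^{\sph}_{d,q} - 1$ equals $(d-1)$ plus a non-negative term minus $\frac{4q(1 - q^{-m})}{q^2 - 1}$. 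Since $\frac{4q}{q^2 - 1} \le \frac{8}{3}$ for $q \ge 2$, this is strictly positive whenever $d \ge 4$. The remaining cases $d \in \{2, 3\}$ reduce to the clean closed forms $B^{\sph}_{2,q} = 2 - \frac{2(q-1)}{q(q+1)}$ and $B^{\sph}_{3,q} = 3 - \frac{4(q-1)}{q(q+1)}$, both of which are visibly strictly greater than $1$ for every $q \ge 2$.

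For items (2) and (3), Theorem~\ref{thm:great-ineq} gives the strict inequality $B^{\SRW}_{\alpha, d, q} > B^{\sph}_{d, q}$, which combined with item (1) yields $B^{\SRW}_{\alpha, d, q} > 1$ for every admissible triple $(\alpha, d, q)$. This simultaneously shows $\inf \ge 1$ and that the infimum is never attained. To show $\inf \le 1$, I would specialize to $d = 1$ and use the identity extracted from the proof of Theorem~\ref{thm:great-ineq},
\[
B^{\SRW}_{\alpha, 1, q} - B^{\sph}_{1, q} = \frac{4q}{(q+1)^2},
\]
so that $B^{\SRW}_{\alpha, 1, q} = 1 + \frac{4q}{(q+1)^2} \to 1$ as $q \to \infty$ (recall that $B^{\SRW}_{\alpha, d, q}$ does not depend on $\alpha$).

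The main obstacle is verifying $B^{\sph}_{d,q} > 1$ for $d \in \{2, 3\}$, where the dominant negative term $-4q/(q^2 - 1)$ is comparable in magnitude to $d - 1$, so one cannot use a crude bound. The resolution is the explicit simplifications above, which express $B^{\sph}_{2,q}$ and $B^{\sph}_{3,q}$ as $2$ and $3$ minus quantities that are easily checked to be strictly less than $1$ and $2$, respectively (by cross-multiplying and reducing to the trivial polynomial inequalities $q^2 - q + 2 > 0$ and $2q^2 - 2q + 4 > 0$).
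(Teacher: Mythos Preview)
Your proof is correct. The paper omits its own proof, saying only that it is ``similar'' to that of Theorem~\ref{thm:great-ineq}, and your argument follows exactly that template: a crude lower bound handles all but finitely many small parameters, and the leftover cases are checked by hand.

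The one stylistic difference worth noting is in how you treat the small cases. The paper's method in Theorem~\ref{thm:great-ineq} would have you use the bound $B^{\sph}_{d,q} > d - \tfrac{4q}{q^2-1}$ and then tabulate the finitely many exceptional pairs $(d,q)$ individually (here $(2,2),(2,3),(2,4),(3,2)$, plus $d=1$ for all $q$). You instead derive the closed forms $B^{\sph}_{2,q} = 2 - \tfrac{2(q-1)}{q(q+1)}$ and $B^{\sph}_{3,q} = 3 - \tfrac{4(q-1)}{q(q+1)}$, valid for every $q$, and dispose of them with a single polynomial inequality. This is a bit cleaner than a table and has the advantage of making the $d=1$ equality case fall out of the same computation rather than requiring separate treatment. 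Your derivation of items (2) and (3) from Theorem~\ref{thm:great-ineq} together with the explicit value $B^{\SRW}_{\alpha,1,q} = 1 + \tfrac{4q}{(q+1)^2}$ is the natural argument.
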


\begin{remark}
We remark that one may obtain the property that $A^{\SRW}_{\alpha, d, q}$, $A^{\sph}_{d,q}$, $A^{\ball}_{d,q}$ belong to the interval $[0,2]$ in a different, simpler way. Since these three numbers are coefficients of the main term from the asymptotic formula for a transportation distance, it is clear that they must be non-negative. To obtain that they are at most $2$, note that in each case the distance is bounded above by $2n+d$, because any two masses from $\mu_X^n$ and from $\mu_Y^n$ are at a distance at most $2n+d$ apart. On the other hand, it is not clear how to obtain the {\em ordering} (``SRW-sphere-ball'') of the three coefficients $A^{\SRW}_{\alpha, d, q}$, $A^{\sph}_{d,q}$, $A^{\ball}_{d,q}$, using such a simple argument. It is also unclear how to obtain, from simple arguments, the properties about $B^{\SRW}_{\alpha, d, q}$, $B^{\sph}_{d,q}$, $B^{\ball}_{d,q}$ we proved in Theorem \ref{thm:great-ineq}.
\end{remark}

\bigskip

\section{Graph Statistics from Simple Random Walks}
\label{sec:chi}
In this section we discuss interesting open questions inspired from our analysis of random walks on graphs.

Suppose $\cG = (V,E)$ is a connected, locally-finite, simple graph, with at least two distinct vertices. The definitions of $\fm_X^n$ and $\fm_Y^n$ (which were given for infinite regular trees in Subsection \ref{subsec:LA-SRW}) can be generalized to our graph $\cG$ as follows. Fix a real number $\alpha \in [0,1)$. For each vertex $u \in V(\cG)$, let $\fm_u^n$ be the probability measure obtained from the simple random walk starting at $u$ with laziness $\alpha$ after $n$ steps.

We define the following four graph statistics:
\[
\chi_{\upharpoonleft\!\upharpoonright}(\cG;\alpha) := \sup_{X,Y \in V(\cG)} \limsup_{n \to \infty} \frac{W_1\!\left( \fm_X^n, \fm_Y^n \right)}{n},
\]
\[
\chi_{\upharpoonleft\!\downharpoonright}(\cG;\alpha) := \sup_{X,Y \in V(\cG)} \liminf_{n \to \infty} \frac{W_1\!\left( \fm_X^n, \fm_Y^n \right)}{n},
\]
\[
\chi_{\downharpoonleft\!\upharpoonright}(\cG;\alpha) := \inf_{\substack{X,Y \in V(\cG) \\ X \neq Y}} \limsup_{n \to \infty} \frac{W_1\!\left( \fm_X^n, \fm_Y^n \right)}{n},
\]
and
\[
\chi_{\downharpoonleft\!\downharpoonright}(\cG;\alpha) := \inf_{\substack{X,Y \in V(\cG) \\ X \neq Y}} \liminf_{n \to \infty} \frac{W_1\!\left( \fm_X^n, \fm_Y^n \right)}{n}.
\]
It is not hard to see that these four real numbers belong to the interval $[0,2]$. It is also not hard to see that these four graph statistics are zero when $\cG$ is finite, or when $\cG = \mathbb{Z}^N$ for some positive integer $N$. In this paper, our work has shown the following.

\begin{proposition} \label{prop: four_stat_tree}
For each real number $\alpha \in [0,1)$ and each positive integer $q \ge 2$, we have
\[
\chi_{\downharpoonleft\!\upharpoonright}(\mathbb{T}_{q+1};\alpha) = \chi_{\downharpoonleft\!\downharpoonright}(\mathbb{T}_{q+1};\alpha) = 2(1-\alpha) \left( \frac{q-1}{q+1} \right)^2,
\]
and
\[
\chi_{\upharpoonleft\!\upharpoonright}(\mathbb{T}_{q+1};\alpha) = \chi_{\upharpoonleft\!\downharpoonright}(\mathbb{T}_{q+1};\alpha) = 2(1-\alpha) \left( \frac{q-1}{q+1} \right).
\]
\end{proposition}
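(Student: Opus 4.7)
The proposition follows cleanly from Theorem \ref{thm:SRW_An+B}, and my plan is to reduce it to the two standard exercises of (i) checking that $A^{\SRW}_{\alpha,d,q}$ is monotone in $d$ and (ii) computing its infimum and supremum.

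The first step is to eliminate the $\liminf$/$\limsup$ distinction. By Theorem \ref{thm:SRW_An+B}, for any two \emph{distinct} vertices $X,Y \in V(\mathbb{T}_{q+1})$ at distance $d := \dist(X,Y) \ge 1$, we have $W_1(\fm_X^n,\fm_Y^n)/n \to A^{\SRW}_{\alpha,d,q}$, so the $\liminf$ and $\limsup$ coincide with this single limit. Consequently
\[
\chi_{\downharpoonleft\!\upharpoonright}(\mathbb{T}_{q+1};\alpha) = \chi_{\downharpoonleft\!\downharpoonright}(\mathbb{T}_{q+1};\alpha) = \inf_{d \ge 1} A^{\SRW}_{\alpha,d,q},
\]
and similarly for the sup-versions, where for the latter I additionally note that the case $X=Y$ contributes $0$ (since $W_1(\fm_X^n,\fm_X^n)=0$) and is therefore negligible; together with the fact that $\mathbb{T}_{q+1}$ contains vertex pairs at every distance $d \ge 1$, this gives
\[
\chi_{\upharpoonleft\!\upharpoonright}(\mathbb{T}_{q+1};\alpha) = \chi_{\upharpoonleft\!\downharpoonright}(\mathbb{T}_{q+1};\alpha) = \sup_{d \ge 1} A^{\SRW}_{\alpha,d,q}.
\]

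The second step is to analyze the function $d \mapsto A^{\SRW}_{\alpha,d,q} = 2(1-\alpha)\,\frac{q-1}{(q+1)^2}\,\bigl(q+1 - q^{1-\delta'} - q^{-\delta}\bigr)$. The factor $2(1-\alpha)(q-1)/(q+1)^2$ is a positive constant in $d$, so the monotonicity is entirely controlled by $q+1 - q^{1-\delta'} - q^{-\delta}$, and equivalently by $q^{1-\delta'} + q^{-\delta}$. Incrementing $d$ by $1$ increments exactly one of $\delta,\delta'$ by $1$, which strictly decreases $q^{1-\delta'}+q^{-\delta}$ whenever $q \ge 2$. Hence $A^{\SRW}_{\alpha,d,q}$ is strictly increasing in $d$.

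The third step is then routine: the infimum is attained at $d=1$, where $\delta=0$ and $\delta'=1$ give $q^{1-\delta'}+q^{-\delta}=2$, so $A^{\SRW}_{\alpha,1,q} = 2(1-\alpha)(q-1)^2/(q+1)^2$; and the supremum is the limit $d\to\infty$, where $q^{1-\delta'}+q^{-\delta}\to 0$, yielding $\sup_{d\ge 1} A^{\SRW}_{\alpha,d,q} = 2(1-\alpha)(q-1)/(q+1)$. Combining the two steps above gives the four equalities claimed. There is no serious obstacle here; the only point that requires a moment's thought is ensuring that the $X=Y$ case in the two supremum statistics does not alter the answer, which is immediate because the $d\to\infty$ limit is strictly positive and hence dominates the zero contribution from $X=Y$.
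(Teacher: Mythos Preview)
Your proof is correct and follows precisely the approach the paper intends: the paper does not give a standalone proof of this proposition but states it as a consequence of Theorem~\ref{thm:SRW_An+B}, and your argument fills in exactly the details one would expect (existence of the limit, monotonicity of $A^{\SRW}_{\alpha,d,q}$ in $d$, and evaluation at $d=1$ and $d\to\infty$).
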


It would be interesting to compute these numbers for other cases. Even in the case of half-spaces where $\cG = \mathbb{Z}_{\ge 0} \times \mathbb{Z}^N$, we think that the calculation would be non-trivial. We call this the ``quadrant problem,'' which we phrase as follows.

\begin{problem}[The Quadrant Problem]
Let $M$ and $N$ be non-negative integers. Let $\alpha \in [0,1)$ be a real number. Compute
\[
\chi_{\upharpoonleft\!\upharpoonright}\left(\mathbb{Z}_{\ge 0}^M \times \mathbb{Z}^N ;\alpha\right), \chi_{\upharpoonleft\!\downharpoonright}\left(\mathbb{Z}_{\ge 0}^M \times \mathbb{Z}^N ;\alpha\right), \chi_{\downharpoonleft\!\upharpoonright}\left(\mathbb{Z}_{\ge 0}^M \times \mathbb{Z}^N ;\alpha\right), \chi_{\downharpoonleft\!\downharpoonright}\left(\mathbb{Z}_{\ge 0}^M \times \mathbb{Z}^N ;\alpha\right).
\]
\end{problem}
Note that when $M = 0$, the four statistics are all zero. This is because in $\mathbb{Z}^N$, the distance between $\fm_X^n$ and $\fm_Y^n$ is always $\dist(X,Y) = d$. On the other hand, when $M \ge 1$, the problem is more interesting.

Proposition \ref{prop: four_stat_tree} leads to another question as follows.

\begin{problem}
In Proposition \ref{prop: four_stat_tree}, we see that when $\cG=\mathbb{T}_{q+1}$, we have the equalities $\chi_{\downharpoonleft\!\upharpoonright}(\cG;\alpha) = \chi_{\downharpoonleft\!\downharpoonright}(\cG;\alpha)$ and $\chi_{\upharpoonleft\!\upharpoonright}(\cG;\alpha) = \chi_{\upharpoonleft\!\downharpoonright}(\cG;\alpha)$. It is natural to ask whether this is a general phenomenon. Do these equations hold for any locally finite connected graph $\cG$? 
\end{problem}

We would also be quite interested in the case where $\cG$ is a Cayley graph. Let us pose a general question here. What is an efficient way to compute these four graph statistics in general?

\bigskip

\section{Relation to Coarse Ricci Curvature}
\label{sec:curvature}

In this section, we would like to mention a concept in geometry which motivates our problem of computing the Wasserstein distance: Ollivier's large-scale coarse Ricci curvature. Following the papers by Ollivier \cite{Ollivier}, Lin-Lu-Yau \cite{LLY11}, and Paulin \cite{Paulin16}, we define the \emph{$n$-scale coarse Ricci curvature} for different $X,Y\in V(\cG)$ to be
\[ \kappa_{n,\alpha}(X,Y):=1-\frac{W_1(\fm^n_X,\fm^n_Y)}{\dist(X,Y)}.
\]

Our work in Section \ref{sec:lin-asymp} can be understood as an explicit computation of a coarse Ricci curvature. In the case of $\cG=\bbT_{q+1}$, we deduce from our asymptotic formula in Theorem \ref{thm:SRW_An+B} that
\begin{align*}
	\kappa_{n,\alpha}(X,Y)
	&=\frac{-2n(1-\alpha)}{\dist(X,Y)}(q+1 - q^{1-\delta'} - q^{-\delta}) \cdot \frac{q-1}{(q+1)^2} \\
	&\phantom{=}
	- \frac{1}{\dist(X,Y)} \left(\frac{2(\delta q^{-\delta} + \delta' q^{1-\delta'})}{q+1} + \frac{2(q^{1-\delta} - q^{1-\delta'})}{(q+1)^2}\right) + o(1),
\end{align*}
where we fix $\alpha,X,Y$ and let $n$ go to infinity. Here we recall our notations $\delta:=\lfloor \dist(X,Y)/2\rfloor$ and $\delta':=\lceil \dist(X,Y)/2\rceil$.

In \cite[Example 15]{Ollivier}, Ollivier also mentions in the case of Cayley graphs of hyperbolic groups (which include infinite regular trees) the asymptotic behavior of the $n$-scale coarse Ricci curvature $\kappa_{n,0}(X,Y)$, as $\dist(X,Y)$ and $n$ tend to infinity.

On the other hand, it is also an interesting problem to find an asymptotic behavior or an estimate for the $n$-scale coarse Ricci curvature for other non-hyperbolic groups. In his survey \cite[Problem C]{Ollsurvey}, Ollivier asks whether the Cayley graph of the discrete Heisenberg group $H_3(\bbZ)=\langle\, a,b,c \mid ac=ca,\ bc=cb, aba^{-1}b^{-1}=c\,\rangle$ has $n$-scale coarse Ricci curvature approaching to $1$ as $n$ tends to infinity.

\section*{Acknowledgments}
We firstly thank Nicolas~Juillet whose discussion with us about random walks on the discrete Heisenberg group became the starting point of this project. We are grateful to Wijit~Yangjit for, in many occasions, sharing with us his complex analytic insights. We thank Andrew Wade for recommending Woess' book. We thank Norbert~Peyerimhoff and Shiping~Liu for sharing geometric viewpoints about curvature and transportation distance and for giving comments on an earlier version of the manuscript. We thank Sophia~Benjamin, Arushi~Mantri, Quinn~Perian, and Sorawee~Porncharoenwase for helpful discussions. We used {\em Desmos}, {\em Macaulay2} (on {\em SageMathCell}), {\em python}, {\em R}, and {\em Wolfram Alpha} to help with computations.

\bibliographystyle{alpha}
\bibliography{ref}

\appendix

\bigskip

\section{Calculation of $W_1\!\left( \mu_X^n, \mu_Y^n \right)$ via the Flow}
\label{sec:appendix}
In Theorem \ref{thm:W1=S1+S2+S3}, we have shown a computation of $W_1\!\left( \mu_X^n , \mu_Y^n \right)$ via the potential. For the given $\psi$, we found a good potential $\Phi$ with respect to $\psi$, and used the potential function to calculate the transportation distance. In Section \ref{sec:gentree}, we saw that we can calculate the transportation distance in two ways: (i) via the potential, and (ii) via the flow. Here, we will show a computation via the flow.

We recall the notation $\psi^{\abs}$ from Section \ref{sec:prelim} and the partition of $V(\cG)$ into $V_{i,h}$ from Section \ref{sec:radial}.

In this calculation via the flow, we do not need to assign potentials to the vertices. Instead, we calculate the values of $\psi^{\abs}(e)$ for different edges $e \in E(\cG)$. Let's partition the edges as follows:
\[
E(\cG) := \mathcal{E}_1 \cup \mathcal{E}_2 \cup \mathcal{E}_3,
\]
where the set $\mathcal{E}_1$ contains the edges with at least one endpoint belonging to
\[
\left(\bigcup_{h = 1}^{\infty} V_{0,h}\right) \cup \left(\bigcup_{h = 1}^{\infty} V_{d,h}\right),
\]
the set $\mathcal{E}_2$ contains the edges with at least one endpoint belonging to
\[
\bigcup_{i=1}^{d-1} \bigcup_{h=1}^{\infty} V_{i,h},
\]
and the set $\mathcal{E}_3$ contains the $d$ edges on the path between $X$ and $Y$.

Using the calculation via the flow described in Section \ref{sec:gentree}, we obtain the following formula for the transportation distance $W_1\!\left( \mu_X^n, \mu_Y^n \right)$:
\[
W_1\!\left( \mu_X^n, \mu_Y^n \right) = \sum_{e \in E(\cG)} \psi^{\abs}(e) = \sum_{e \in \mathcal{E}_1} \psi^{\abs}(e) + \sum_{e \in \mathcal{E}_2} \psi^{\abs}(e) + \sum_{e \in \mathcal{E}_3} \psi^{\abs}(e).
\]

To write down the formula explicitly, we compute each sum directly. We find
\[
\sum_{e \in \mathcal{E}_1} \psi^{\abs}(e) = 2 \sum_{h=0}^{\infty} \sum_{i=1}^{\infty} q^{h+i} \left( g(h+i,n) - g(d+h+i,n) \right),
\]
\[
\sum_{e \in \mathcal{E}_2} \psi^{\abs}(e) = 2 \sum_{b=1}^{\left\lfloor (d-1)/2 \right\rfloor} \sum_{h=0}^{\infty} \sum_{i=1}^{\infty} (q-1) q^{h+i-1} \left( g(b+h+i,n) - g(d-b+h+i,n) \right),
\]
and
\begin{align*}
\sum_{e \in \mathcal{E}_3} \psi^{\abs}(e) &= \sum_{a=0}^{d-1} \sum_{h=0}^{\infty} q^h \left( g(h,n) - g(d+h,n) \right) \\
&\hphantom{=} + \sum_{a=0}^{d-1} \sum_{i=1}^a \left( g(i,n) - g(d-i,n) \right) \\
&\hphantom{=} + \sum_{a=0}^{d-1} \sum_{i=1}^a \sum_{h=1}^{\infty} (q-1) q^{h-1} \left( g(i+h,n) - g(d-i+h,n) \right).
\end{align*}

The formula for $W_1 \! \left( \mu_X^n, \mu_Y^n \right)$ here does not look similar to the one we obtained in Theorem \ref{thm:W1=S1+S2+S3}. For instance, here we have a triple sum. However, one can notice without much difficulty that the formula here can be simplified. It is straightforward to check that the formula we obtain here agrees with the formula $W_1 \! \left( \mu_X^n, \mu_Y^n \right) = S_1 + S_2 + S_3$ in Theorem \ref{thm:W1=S1+S2+S3}. We will briefly describe a way to verify the agreement below.

On the right hand side of the equation for $\sum_{e \in \mathcal{E}_3} \psi^{\abs}(e)$ above, let us denote by $T$ the first double sum, by $T'$ the second double sum, and by $T''$ the triple sum. It is not hard to show that
\[
\left( \sum_{e \in \mathcal{E}_1} \psi^{\abs}(e) \right) + T = S_1,
\]
\[
\left( \sum_{e \in \mathcal{E}_2} \psi^{\abs}(e) \right) + T'' = S_3,
\]
and
\[
T' = S_2.
\]
We sum up the three equalities. The sum of the left hand sides becomes the formula via the flow, while the sum $S_1 + S_2 + S_3$ of the right hand sides is the formula, from Theorem \ref{thm:W1=S1+S2+S3}, via the potential.

Thus we have obtained an alternative derivation of the formula for $W_1\!\left( \mu_X^n, \mu_Y^n \right)$, confirming Theorem \ref{thm:W1=S1+S2+S3}.

\bigskip

\section{Asymptotic Analysis of $\gamma(y)$}
\label{sec:gamma}
In Subsection \ref{subsec:LA-SRW}, when we were analyzing the case of simple random walks, we computed the asymptotic formulas for $[y^n] G_1\!(q,y)$ and $[y^n] G(q,y)$. Another important generating function we found was $\gamma(y)$, but we did not need to find a precise asymptotic formula for $[y^n] \gamma(y)$ in order to obtain Theorem \ref{thm:SRW_An+B}: we only needed a general fact from the theory of Markov chains that the $\gamma_i$ terms are small (in the sense of Corollary \ref{cor:gamma-small-good-asymp}). The readers might wonder about the precise asymptotic behavior of $[y^n] \gamma(y)$ as $n \to \infty$. Our goal of this section is to investigate this. We will prove Theorems \ref{thm:gamma-asymp-alpha>0} and \ref{thm:y^2n-gamma-y}, which describe the asymptotic behavior.

The tool we utilize here is a standard method from analytic combinatorics. We refer the readers to Chapter VI of the book of Flajolet and Sedgewick \cite{FS09}. Due to Markov chain periodicity, the asymptotic behavior of $[y^n] \gamma(y)$ is slightly different when $\alpha > 0$ from when $\alpha = 0$. In Subsection \ref{subsec:alpha>0}, we look at the case $\alpha > 0$. We then turn to the case $\alpha = 0$ in Subsection \ref{subsec:alpha=0}. Later, in Subsections \ref{subsec:NE} and \ref{subsec:GBF}, we discuss examples.

\medskip

Let $q \ge 2$ be a positive integer, and let $\alpha \in [0,1)$ be a real number. Recall from Theorem~\ref{thm:SRW-formula} that the generating function $\gamma(y) \in \mathbb{R}[\![y]\!]$ is given by
\[
\gamma(y) = \frac{q}{\left( \frac{q-1}{2} \right) (1-\alpha y) + \sqrt{\Delta}},
\]
where
\[
\Delta := \left( \frac{q+1}{2} \right)^2 (1-\alpha y)^2 - q(1-\alpha)^2 y^2.
\]

The asymptotic behavior of $[y^n]\gamma(y)$ as $n \to \infty$ depends on whether $\alpha$ is zero or not. We therefore distinguish two cases. We treat the case $\alpha > 0$ in Subsection \ref{subsec:alpha>0}, and then treat the case $\alpha = 0$ in Subsection \ref{subsec:alpha=0}.

\medskip

\subsection{Analysis when $\alpha > 0$}\label{subsec:alpha>0}
In this subsection, assume $\alpha > 0$. We locate the singular point of $\gamma(y)$ of the smallest magnitude in the following lemma.

\begin{lemma}\label{l:alpha>0-sing}
Define
\[
\rho := \frac{q+1}{\alpha \cdot (q+1) + (1-\alpha) \cdot 2\sqrt{q}}.
\]
Then, $\rho \in \mathbb{C}$ is the unique singular point of $\gamma(y)$ of the smallest magnitude.
\end{lemma}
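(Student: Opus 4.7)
The plan is to enumerate the possible singular points of $\gamma(y)$ from its closed form and then compare their magnitudes. Since $\gamma$ involves $\sqrt{\Delta}$ explicitly, two sources of singularity must be considered: branch points of $\sqrt{\Delta}$ (the zeros of $\Delta$), and zeros of the denominator $\frac{q-1}{2}(1-\alpha y) + \sqrt{\Delta}$. I will handle each source in turn.

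For the branch points, I would solve $\Delta = 0$, which reduces to $\bigl(\frac{q+1}{2}\bigr)^2 (1-\alpha y)^2 = q(1-\alpha)^2 y^2$. Taking the two possible signs of the square root yields exactly two candidate points
\[
\rho_\pm = \frac{q+1}{\alpha(q+1) \pm 2\sqrt{q}(1-\alpha)},
\]
so $\rho = \rho_+$. A short arithmetic computation verifies $\rho_+ < |\rho_-|$ regardless of the sign of $\alpha(q+1) - 2\sqrt{q}(1-\alpha)$, because the ``$+$'' denominator always strictly exceeds the absolute value of the ``$-$'' denominator. That $\rho$ is a genuine (not removable) square-root branch point will be clearest after the rationalization step below.

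For zeros of the denominator, the key move is to rationalize $\gamma$ by multiplying numerator and denominator by the conjugate $\frac{q-1}{2}(1-\alpha y) - \sqrt{\Delta}$. Using the identity $(q-1)^2 - (q+1)^2 = -4q$, the bottom collapses to $-q(1-y)(1-(2\alpha-1)y)$, producing
\[
\gamma(y) = \frac{\sqrt{\Delta} - \tfrac{q-1}{2}(1-\alpha y)}{(1-y)\bigl(1-(2\alpha-1)y\bigr)}.
\]
I would then inspect the candidate poles $y = 1$ and $y = 1/(2\alpha-1)$. At $y = 1$, the numerator evaluates to $(1-\alpha)\!\left[\sqrt{((q+1)/2)^2 - q} - \tfrac{q-1}{2}\right] = 0$, so the $(1-y)$ factor cancels. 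The second candidate is only a positive real number (and only possibly nearer than $\rho$) when $\alpha > 1/2$; in that case the inequality $1/(2\alpha-1) > \rho$ reduces, after clearing denominators, to $-(1-\alpha)(q+1) < 2\sqrt{q}(1-\alpha)$, which is immediate. The rationalized form also makes it transparent that $\rho$ is a genuine branch point: near $\rho$, the denominator $(1-y)(1-(2\alpha-1)y)$ is nonzero (because $\rho > 1$ and, as just shown, $(2\alpha-1)\rho < 1$), the regular part of the numerator satisfies $-\tfrac{q-1}{2}(1-\alpha\rho) \ne 0$ (else $\alpha = 1$, excluded by hypothesis), and $\sqrt{\Delta}$ contributes a $\sqrt{y-\rho}$-type singularity that is not cancelled.

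The most delicate portion of executing this plan will be the square-root branch bookkeeping: verifying the cancellation at $y = 1$ on the principal sheet and confirming that $\rho$ is truly a branch singularity rather than an accidental removable zero. Once these are pinned down, the remaining magnitude comparisons of $\rho$ with $|\rho_-|$ and with $1/(2\alpha-1)$ are elementary inequalities.
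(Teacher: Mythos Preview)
Your approach coincides with the paper's: rationalize $\gamma$, list the four candidates $1$, $1/(2\alpha-1)$, $\rho_+ = \rho$, $\rho_-$, and compare. There is, however, a gap in your treatment of $1/(2\alpha-1)$ for the range $0 < \alpha < 1/2$. Your parenthetical claim that this candidate is ``only possibly nearer than $\rho$'' when $\alpha > 1/2$ is false as a raw magnitude statement: for instance with $q=2$ and $\alpha = 1/100$ one has $|1/(2\alpha-1)| = 50/49 \approx 1.020$ while $\rho = 300/(3 + 198\sqrt{2}) \approx 1.060$. The fix---and the route the paper actually takes---is to show that $1/(2\alpha-1)$ is a \emph{removable} singularity when $\alpha < 1/2$, by exactly the same computation you carried out at $y=1$: plugging in $y = 1/(2\alpha-1)$ gives $1 - \alpha y = (1-\alpha)/(1-2\alpha)$ and $\Delta = \bigl(\tfrac{q-1}{2}\bigr)^2(1-\alpha)^2/(1-2\alpha)^2$, and since $\Delta$ stays positive on the real segment from $0$ to $1/(2\alpha-1)$ (the inequality $|\rho_-| > 1/(1-2\alpha)$ reduces to $q+1 > 2\sqrt{q}$), the principal branch yields $\sqrt{\Delta} = \tfrac{q-1}{2}(1-\alpha y)$ and the numerator vanishes.

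The rest of your plan is correct and matches the paper's argument. Your explicit verification that $\rho$ is a genuine (non-removable) branch point is a detail the paper leaves tacit.
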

\begin{proof}
First, note that we can write
\[
\Delta = \frac{(q+1)^2}{4} \left( 1 - \frac{\alpha (q+1) - (1-\alpha) 2 \sqrt{q}}{q+1} \cdot y \right) \left( 1 - \frac{\alpha (q+1) + (1-\alpha) 2 \sqrt{q}}{q+1} \cdot y \right),
\]
and
\[
\gamma = \frac{q}{\phi_1} = \frac{\ol{\phi}_1}{(y-1) \left( (1-2\alpha)y + 1 \right)},
\]
where
\[
\ol{\phi}_1 = \left( \frac{q-1}{2} \right) (1-\alpha y) - \sqrt{\Delta}.
\]
At this point, there are four candidates for the smallest magnitude singularity: $1$, $(2\alpha-1)^{-1}$ (if $\alpha \neq 1/2$), $\rho$, and $\left( \frac{\alpha (q+1) - (1-\alpha) 2 \sqrt{q}}{q+1} \right)^{-1}$ (if $\alpha \neq \frac{2\sqrt{q}}{(\sqrt{q}+1)^2}$). It is easy to see that the point $1$ is, in fact, a removable singularity. Moreover, if $\alpha < 1/2$, the point $(2\alpha-1)^{-1}$ is removable. On the other hand, if $\alpha > 1/2$, it is straightforward to check that $\rho < 1/(2\alpha - 1)$, and thus $(2\alpha - 1)^{-1}$ is not a singularity of the smallest magnitude.

If $\alpha \neq \frac{2\sqrt{q}}{(\sqrt{q}+1)^2}$, then by the triangle inequality, we see that
\[
\frac{q+1}{\alpha(q+1) + (1-\alpha) \cdot 2\sqrt{q}} < \left| \frac{q+1}{\alpha(q+1) - (1-\alpha) \cdot 2\sqrt{q}} \right|.
\]
Note that the inequality is strict by the assumption that $\alpha \neq 0$. We have shown that $\rho$ is the unique singular point of $\gamma(y)$ with the smallest magnitude.
\end{proof}

Now that Lemma \ref{l:alpha>0-sing} gives us the location of the singularity of the smallest magnitude, the rest is routine computation. We refer the reader to \cite[Chapter VI]{FS09} for details. The singularity analysis gives the following result.

\begin{theorem}\label{thm:gamma-asymp-alpha>0}
For any $\alpha \in (0,1)$ and $q \in \mathbb{Z}_{\ge 2}$, we have the following asymptotic formula
\[
[y^n] \gamma(y) = \frac{1}{\sqrt{4\pi}} \left( \left( \frac{\alpha}{1-\alpha} \right) (q+1) + 2\sqrt{q} \right)^{3/2} \cdot \frac{(q+1)q^{1/4}}{(q-1)^2} \cdot \rho^{-n} \cdot n^{-3/2} + O\!\left(\rho^{-n} n^{-2}\right),
\]
where
\[
\rho = \frac{q+1}{\alpha \cdot (q+1) + (1-\alpha) \cdot 2\sqrt{q}}.
\]
\end{theorem}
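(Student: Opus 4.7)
The plan is to apply standard singularity analysis (the transfer theorem of Flajolet--Odlyzko, see \cite{FS09}) to $\gamma(y)$ using the singularity structure identified in Lemma \ref{l:alpha>0-sing}. Since $\gamma = q/\phi_1$ where $\phi_1 = \frac{q-1}{2}(1-\alpha y) + \sqrt{\Delta}$ and $\Delta$ is a polynomial in $y$ with real roots $\rho$ and $\rho'$ (with $\rho < \rho'$ in absolute value when $\alpha > 0$), the function $\gamma(y)$ is not meromorphic at $\rho$ but has an algebraic branch point of order $1/2$. One checks that $\phi_1(\rho) = \frac{q-1}{2}(1-\alpha\rho) > 0$, so the factor $q/\phi_1$ remains bounded and nonzero at $\rho$; the singularity comes entirely from $\sqrt{\Delta}$.

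The first step is to verify the hypotheses of the transfer theorem. By choosing the branch cut of $\sqrt{\Delta}$ along the real segment from $\rho$ to $\rho'$, the function $\gamma$ is holomorphic in a slit disk of the form $\{|y| < \rho'\} \setminus [\rho, \rho']$, and in particular extends to a $\Delta$-domain at $\rho$. The second step is to expand $\gamma$ near $y = \rho$ in powers of $u := 1 - y/\rho$. Writing $\Delta = \tfrac{(q+1)^2}{4}(1 - y/\rho)(1 - y/\rho')$ and setting $M := \alpha(q+1) + (1-\alpha)\cdot 2\sqrt{q}$, one computes
\[
\Delta = K \cdot u + O(u^2), \qquad K = \frac{(q+1)^2(1-\alpha)\sqrt{q}}{M},
\]
and $\tfrac{q-1}{2}(1 - \alpha y) = L + O(u)$ with $L = \frac{(q-1)(1-\alpha)\sqrt{q}}{M}$. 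Expanding the reciprocal yields
\[
\gamma(y) = \frac{q}{L} - \frac{q\sqrt{K}}{L^{2}}\,\sqrt{u} + O(u),
\]
where the $O(u)$ error is a genuine holomorphic term plus a term of order $u^{3/2}$, both of which produce coefficients of order $\rho^{-n} n^{-2}$ after transfer.

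The third step is to apply the standard transfer asymptotics
\[
[y^n](1-y/\rho)^{1/2} = \frac{n^{-3/2}}{\Gamma(-1/2)}\,\rho^{-n}\bigl(1 + O(1/n)\bigr) = -\frac{1}{2\sqrt{\pi}}\,\rho^{-n} n^{-3/2} + O(\rho^{-n} n^{-5/2}),
\]
so that the constant term and the $O(u)$ terms contribute only $O(\rho^{-n} n^{-2})$ (and in fact less), yielding
\[
[y^n]\gamma(y) = \frac{q\sqrt{K}}{2\sqrt{\pi}\, L^{2}}\,\rho^{-n} n^{-3/2} + O\!\bigl(\rho^{-n} n^{-2}\bigr).
\]
The final step is bookkeeping: plugging in $K$ and $L$ above, using $L^2 = (q-1)^2(1-\alpha)^2 q/M^2$ and $\sqrt{K} = (q+1)q^{1/4}\sqrt{(1-\alpha)/M}$, the leading coefficient simplifies to
\[
\frac{(q+1)q^{1/4}}{2\sqrt{\pi}\,(q-1)^{2}}\left(\frac{M}{1-\alpha}\right)^{\!3/2} = \frac{1}{\sqrt{4\pi}}\left(\tfrac{\alpha}{1-\alpha}(q+1) + 2\sqrt{q}\right)^{\!3/2}\frac{(q+1)q^{1/4}}{(q-1)^{2}},
\]
which is the stated constant. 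The main obstacles are two bookkeeping tasks: rigorously setting up the $\Delta$-domain and justifying the uniformity of the error in the transfer (standard but requires care because $\rho$ is a genuine branch point, not a pole), and simplifying the algebraic constant; once the expansion of $\sqrt{\Delta}$ at $\rho$ is written in terms of the convenient quantity $M$, the final identification with $(\tfrac{\alpha}{1-\alpha}(q+1)+2\sqrt{q})^{3/2}$ drops out cleanly.
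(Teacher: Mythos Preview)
Your proposal is correct and follows exactly the approach the paper indicates: the paper merely says ``the rest is routine computation'' and points to Chapter~VI of \cite{FS09}, while you have supplied those routine details (local expansion of $\sqrt{\Delta}$ at $\rho$, transfer of the $u^{1/2}$ term, and algebraic simplification via the quantity $M$). One minor imprecision: your branch-cut description ``along the real segment from $\rho$ to $\rho'$'' tacitly assumes $\rho' > 0$, which fails for small $\alpha$; but since Lemma~\ref{l:alpha>0-sing} already gives $|\rho| < |\rho'|$ strictly, a $\Delta$-domain at $\rho$ exists regardless, so this does not affect the argument.
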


\medskip

\subsection{Analysis when $\alpha = 0$}\label{subsec:alpha=0}
The scenario where $\alpha = 0$ is interesting in many aspects. Combinatorially, the simple random walk is non-lazy. Since our graph is bipartite, the Markov chain from the random walk becomes periodic modulo $2$. Analytically, there are more than one singular points on the circle of smallest singularities. Fortunately, the analysis is not too much different from the previous case. In fact, the specialization $\alpha = 0$ seems to simplify the problem.

We remark that this case $\alpha = 0$ is well-studied in algebraic combinatorics. Many sequences on \cite{OEIS} are concerned with the case. See Subsection \ref{subsec:GBF} for details.

When $\alpha = 0$, we have
\[
\Delta = \left( \frac{q+1}{2} \right)^2 - qy^2,
\]
and we may write
\[
\gamma(y) = \frac{q-1-2\sqrt{\Delta}}{2(y^2-1)}.
\]
Let's define a new generating function $\kappa(z) \in \mathbb{R}[\![z]\!]$ as
\[
\kappa(z) := \frac{q-1-\sqrt{(q+1)^2-4qz}}{2(z-1)}.
\]
Note that $\kappa(y^2) = \gamma(y)$ as generating functions in $y$. Hence, if we write $\kappa$ as the series
\[
\kappa(z) = a_0 + a_1 z + a_2 z^2 + \cdots,
\]
then we have
\[
\gamma(y) = a_0 + a_1 y^2 + a_2 y^4 + \cdots.
\]
It is easy to check that $1 \in \mathbb{C}$ is a removable singularity of $\kappa$, and the smallest-magnitude singularity of $\kappa$ is at
\[
\rho := \frac{(q+1)^2}{4q}.
\]
After we have located the singularity closest to zero, the rest is routine computation. Once again, we refer to \cite[Chapter VI]{FS09} for details. We have
\[
[z^n] \kappa(z) = \frac{1}{\sqrt{\pi}} \cdot \frac{q(q+1)}{(q-1)^2} \cdot \left( \frac{4q}{(q+1)^2} \right)^n n^{-3/2} + O\!\left( \left( \frac{4q}{(q+1)^2} \right)^n n^{-2} \right).
\]
Thus we have proved the following.

\begin{theorem}\label{thm:y^2n-gamma-y}
Let $q \ge 2$ be a positive integer. In the case $\alpha = 0$, we have that
\[
[y^{2n}] \gamma(y) = \frac{1}{\sqrt{\pi}} \cdot \frac{q(q+1)}{(q-1)^2} \cdot \left( \frac{4q}{(q+1)^2} \right)^n n^{-3/2} + O\!\left( \left( \frac{4q}{(q+1)^2} \right)^n n^{-2} \right),
\]
as $n \to \infty$, and $[y^{2n+1}] \gamma(y) = 0$, for $n \ge 0$.
\end{theorem}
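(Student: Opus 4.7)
The plan is to reduce the coefficient extraction on $\gamma(y)$ to a one-variable problem and then apply standard singularity analysis from \cite[Chapter VI]{FS09}. First I would set $\alpha = 0$ in Theorem \ref{thm:SRW-formula} and simplify, obtaining
\[
\gamma(y) = \frac{q-1-\sqrt{(q+1)^2-4qy^2}}{2(y^2-1)},
\]
which depends on $y$ only through $y^2$. Defining $\kappa(z) := \frac{q-1-\sqrt{(q+1)^2-4qz}}{2(z-1)} \in \mathbb{R}[\![z]\!]$ so that $\kappa(y^2) = \gamma(y)$ as formal power series, I immediately get $[y^{2n+1}]\gamma(y) = 0$, and the theorem reduces to establishing
\[
[z^n]\kappa(z) = \frac{1}{\sqrt{\pi}}\cdot\frac{q(q+1)}{(q-1)^2}\cdot\rho^{-n} n^{-3/2} + O\!\left(\rho^{-n}n^{-2}\right),
\]
where $\rho := (q+1)^2/(4q)$.

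Next I would locate the dominant singularity of $\kappa$. The square root is branched at $z = \rho$, where the radicand factors as $(q+1)^2 - 4qz = 4q(\rho - z)$. There is an apparent simple pole at $z = 1$, but at $z = 1$ the numerator equals $q-1 - \sqrt{(q-1)^2} = 0$ (taking the branch which agrees with $q+1$ at $z = 0$), so the singularity at $1$ is removable. Since $q \ge 2$, we have $(q-1)^2 > 0$ and hence $\rho > 1$, so $\rho$ is the unique smallest-modulus singularity of $\kappa$, and $\kappa$ extends analytically to a $\Delta$-domain based at $\rho$ as required by the transfer theorem.

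I would then derive the singular expansion of $\kappa$ at $z = \rho$. Factoring gives $\sqrt{(q+1)^2 - 4qz} = (q+1)\sqrt{1 - z/\rho}$, and combining this with $\rho - 1 = (q-1)^2/(4q)$ and the Taylor expansion $\frac{1}{z-1} = \frac{1}{\rho-1} + O(z-\rho)$ yields
\[
\kappa(z) = \frac{q-1}{2(\rho-1)} - \frac{q+1}{2(\rho-1)}\sqrt{1 - z/\rho} + O\!\left((1 - z/\rho)^{3/2}\right)
\]
as $z \to \rho$. The transfer theorem gives $[z^n]\sqrt{1-z/\rho} \sim -\frac{1}{2\sqrt{\pi}} \rho^{-n} n^{-3/2}$; the analytic constant contributes nothing to asymptotic growth, and the $O((1-z/\rho)^{3/2})$ remainder transfers to $O(\rho^{-n} n^{-5/2}) \subseteq O(\rho^{-n} n^{-2})$. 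Substituting $(\rho-1)^{-1} = 4q/(q-1)^2$ collapses the leading coefficient to $\frac{q+1}{4\sqrt{\pi}(\rho-1)} = \frac{q(q+1)}{\sqrt{\pi}(q-1)^2}$, matching the claimed formula.

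The main obstacle is essentially bookkeeping: verifying that $z = 1$ does not contribute a competing singularity and confirming the $\Delta$-analyticity (an analytic continuation into a sector based at $\rho$ and avoiding the branch cut along $[\rho,\infty)$) so that the transfer theorem genuinely applies. Once these verifications are in hand, the remainder of the argument is a direct, standard singularity-analysis calculation.
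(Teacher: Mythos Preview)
Your proposal is correct and follows essentially the same route as the paper: you introduce the same auxiliary series $\kappa(z)$ with $\kappa(y^2)=\gamma(y)$, verify that $z=1$ is a removable singularity, identify $\rho=(q+1)^2/(4q)$ as the unique dominant singularity, and then invoke the transfer theorems of \cite[Chapter~VI]{FS09}. In fact you supply more detail than the paper, which simply declares the remaining singularity analysis ``routine computation'' after locating $\rho$; your explicit singular expansion and coefficient check are exactly what that routine computation amounts to.
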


\medskip

\subsection{First Example: Noncommutative Expansion}\label{subsec:NE}
A fine example of application of our analysis here is the OEIS sequence A328494 \cite{OEIS}, which contains
\[
f(n) := \text{the constant term of } \left(1+x+y+x^{-1}+y^{-1}\right)^n,
\]
for each $n \in \mathbb{Z}_{\ge 0}$, where $x$ and $y$ are {\em noncommutative} variables. The first few terms are $f(0) = 1$, $f(1) = 1$, $f(2) = 5$, $f(3) = 13$, and $f(4) = 53$ \cite[A328494]{OEIS}. The number $f(n)$ has the following combinatorial description of walks on graph. Suppose we start (at time $t = 0$) at a particular vertex in the infinite regular tree $\mathbb{T}_4$. At each time step, we either stay at the same vertex or move one step to a neighbor. Then, $f(n)$ is the number of different walks we can take from time $t = 0$ to time $t = n$ to come back to the initial vertex at the end of our walk. This is indeed closely related to the generating function $\gamma(y)$ in the case where $\alpha = 1/5$ and $q = 3$: note that
\[
f(n) = 5^n \cdot \left( [y^n] \gamma(y) \right) = [y^n] \gamma(5y).
\]
Using the formula from Theorem \ref{thm:SRW-formula}, we find
\[
\sum_{n=0}^{\infty} f(n) \cdot y^n = \gamma(5y) = \frac{3}{1-y+2\sqrt{1-2y-11y^2}},
\]
which gives the (ordinary) generating function for A328494 \cite{OEIS}.

Now, Theorem \ref{thm:gamma-asymp-alpha>0} gives
\[
[y^n] \gamma(y) = \sqrt{\frac{90+37\sqrt{3}}{4\pi}} \left( \frac{1+2\sqrt{3}}{5} \right)^n n^{-3/2} + O \!\left( \left( \frac{1+2\sqrt{3}}{5} \right)^n n^{-2} \right).
\]
In particular, this yields the asymptotic formula for the terms in A328494:
\[
f(n) \sim \sqrt{\frac{90+37\sqrt{3}}{4\pi}} \cdot (1+2\sqrt{3})^n \cdot n^{-3/2},
\]
as $n \to \infty$.

The example above is easy to generalize to a higher number of noncommutative variables. For instance, we can consider
\[
g(n) := \text{the constant term of } \left(1+x+y+z+x^{-1}+y^{-1}+z^{-1}\right)^n,
\]
for each $n \in \mathbb{Z}_{\ge 0}$, where $x, y, z$ are noncommutative variables. The first few terms of the sequence $\{g(n)\}_{n=0}^{\infty}$ are $g(0) = 1$, $g(1) = 1$, $g(2) = 7$, $g(3) = 19$, $g(4) = 103$, $g(5) = 391$, and $g(6) = 1957$. After applying an analogous analysis as above, we obtain the generating function
\[
\sum_{n=0}^{\infty} g(n) \cdot y^n = \frac{5}{2(1-y) + 3 \sqrt{1-2y-19y^2}},
\]
and the asymptotic formula
\[
g(n) \sim \frac{3}{16} \cdot \sqrt{\frac{230 + 61 \sqrt{5}}{\pi}} \cdot (1+2\sqrt{5})^n \cdot n^{-3/2},
\]
as $n \to \infty$.

\medskip

\subsection{Second Example: Formulas of Boddington's and Kotesovec's}\label{subsec:GBF}
As one might have expected, the case where there is no laziness ($\alpha = 0$) is well-studied in combinatorics. Many sequences on the OEIS \cite{OEIS} are related to this situation, and we give some examples of them below in this subsection. A related question was also considered, for example, in the work of Haiman \cite{Hai93}.

For any $q \in \mathbb{Z}_{\ge 2}$, when $\alpha = 0$, if we let
\[
f(n) = (q+1)^{2n} \left( [y^{2n}] \gamma(y) \right) = [y^{2n}] \gamma((q+1)y),
\]
then $f(n)$ enumerates the number of $2n$-step walks in $\mathbb{T}_{q+1}$ which start and finish at a distinguished point. Our Theorem \ref{thm:SRW-formula} can be seen as a generalization of this object into which laziness is introduced.

When $\alpha = 0$, we have seen in Subsection \ref{subsec:alpha=0} that
\[
\gamma(y) = \frac{q-1-\sqrt{(q+1)^2-4qy^2}}{2(y^2-1)} = \frac{2q}{(q-1)+\sqrt{(q+1)^2-4qy^2}}.
\]
This shows that
\[
\sum_{n=0}^{\infty} f(n) x^n = \frac{2q}{(q-1)+(q+1)\sqrt{1-4qx}}. \tag{$\heartsuit$}
\]
This formula ($\heartsuit$) was discovered by Paul Boddington (cf. A035610 on \cite{OEIS}). (Note that Boddington uses the variable $m$ instead of $q$. The two variables are simply related by $m = q+1$.) Our formula for $\gamma(y)$ in Theorem \ref{thm:SRW-formula} can be therefore called a {\em generalized Boddington's formula}: the formula has an additional laziness parameter $\alpha$.

Many OEIS sequences are related to the formula ($\heartsuit$) of Boddington's. For example, when $q = 2$, we obtain the generating function
\[
\frac{4}{1+3\sqrt{1-8x}},
\]
which is A089022 \cite{OEIS}. On the OEIS page for this sequence, Vaclav Kotesovec computed the asymptotic formula for the terms. When $q = 3$, we find A035610 \cite{OEIS}, where Kotesovec also computed the asymptotic formula for the terms. When $q = 4$, we find A130976 \cite{OEIS}, and Kotesovec also provided the asymptotic formula as well. Our work in Subsection \ref{subsec:alpha=0} produces the asymptotic formula for a general $q \in \mathbb{Z}_{\ge 2}$. Since $f(n) = (q+1)^{2n} \left( [y^{2n}] \gamma(y) \right)$, Theorem \ref{thm:y^2n-gamma-y} yields the general formula
\[
f(n) \sim \frac{q(q+1)}{\sqrt{\pi} (q-1)^2} \cdot (4q)^n n^{-3/2},
\]
as $n \to \infty$. Specializations of this asymptotic formula indeed agree with many aforementioned formulas of Kotesovec's.

\end{document}